\documentclass[12pt,a4paper,oneside,reqno]{article}

\usepackage[utf8]{inputenc}
\usepackage[english]{babel}
\usepackage[left=1in,right=1in,top=1in,bottom=1in]{geometry}
\usepackage{amsfonts,amssymb,amsmath,amsthm}
\usepackage[symbol]{footmisc}
\usepackage{longtable}
\usepackage{enumerate}
\usepackage{cite}
\usepackage{hyperref}
\hypersetup{colorlinks=true,linkcolor=black,citecolor=black,urlcolor=blue}
\usepackage{fancyhdr}
\theoremstyle{plain}
\newtheorem{theorem}{Theorem}
\newtheorem{lemma}[theorem]{Lemma}

\newtheorem{remark}[theorem]{Remark}

\newtheorem{definition}[theorem]{Definition}
\newtheorem{corollary}[theorem]{Corollary}

\DeclareMathOperator{\Id}{Id}
\DeclareMathOperator{\Aut}{Aut}

\DeclareMathOperator{\Der}{Der}
\DeclareMathOperator{\Ann}{Ann}
\DeclareMathOperator{\GL}{GL}

\usepackage{fouriernc}

\newtheorem*{theoremA}{Theorem A}
\newtheorem*{theoremB}{Theorem B}
\newtheorem*{theoremC}{Theorem C}
\newtheorem*{theoremD}{Theorem D}
\DeclareMathOperator{\Rad}{Rad}
\DeclareMathOperator{\rank}{rank}
\newcommand{\doi}[1]{\href{https://doi.org/#1}{doi:#1}}
\newcommand{\arxiv}[1]{\href{https://arxiv.org/abs/#1}{arXiv:#1}}
\newcommand{\PP}{\mathcal P}
\newcommand{\AAA}{\mathcal A}
\newcommand{\C}{\mathbb C}
\newcommand{\T}{\mathbf T}
\newcommand{\N}{\mathcal N}

\begin{document}

\noindent{\large
The algebraic and geometric classification of  
$3$-Poisson algebras}%
\footnote{The work is supported by FCT 2023.08031.CEECIND, 2023.08952.CEECIND and UID/00212/2025.}%
\footnote{The original concept, the initial list of problems and the overall framework were developed by
H.A., K.A.\ and I.K. The AI system Aristotle (Harmonic, \url{https://aristotle.harmonic.fun/}) assisted
the authors in the literature search, error verification, computer experiments, and the formal
verification in Lean.}

\medskip 

\begin{center}
{\bf
Hani Abdelwahab\footnote{Department of Mathematics, Mansoura University, Mansoura, Egypt; \ haniamar1985@gmail.com},
Kobiljon Abdurasulov\footnote{CMA-UBI, University of Beira Interior, Covilh\~{a}, Portugal; \ Institute of Mathematics Academy of
Sciences of Uzbekistan, Tashkent, Uzbekistan; \ abdurasulov0505@mail.ru} \&
Ivan Kaygorodov\footnote{CMA-UBI, University of Beira Interior, Covilh\~{a}, Portugal; \ kaygorodov.ivan@gmail.com}}
\end{center}

\noindent {\bf Abstract:}
{\it $3$-Poisson (ternary Nambu--Poisson) algebras are the algebras of observables of Nambu mechanics,
the generalization of Hamiltonian mechanics in which the time evolution $\dot f=\{f,H_1,H_2\}$ is
governed by two Hamiltonians and preserves the phase volume; their $3$-Lie part also gives the gauge
algebra of the Bagger--Lambert--Gustavsson model of multiple M2-branes.
We give the algebraic classification of complex $4$-dimensional $3$-Poisson algebras (seven algebras)
and of complex $5$-dimensional nilpotent $3$-Poisson algebras ($38$ algebras and one one-parameter
family) with both multiplications non-zero.
We also describe the algebraic counterpart of the In\"on\"u--Wigner contractions
of physics: the variety of complex $4$-dimensional $3$-Poisson algebras has dimension $16$ and seven
irreducible components, one of which is the orbit closure of the simple $3$-Lie algebra of the
M2-brane model, and the variety of complex $5$-dimensional nilpotent $3$-Poisson algebras has
dimension $20$ and five irreducible components.}

\medskip

\noindent {\bf Keywords}:
{\it $3$-Poisson algebra, Nambu mechanics, $3$-Lie algebra, algebraic classification,
geometric classification.}

\medskip

\noindent {\bf MSC2020}: 17A30 (primary); 17B63, 17A40, 53D17, 14L30 (secondary).

\tableofcontents

\section*{Introduction}
\addcontentsline{toc}{section}{Introduction}

Algebraic structures with several compatible multiplications appear naturally in mathematics and
mathematical physics \cite{DET26,aae23,MF,kms,CK16,ben,kz,AFM25,ABCF23,LS26,L23,OS24}.
The best known example is given by Poisson algebras, which combine a commutative associative
multiplication and a Lie bracket.
Generalizations of Lie algebras to $n$-ary multiplications were introduced by Filippov~\cite{F85};
structures of this type also appear in the generalization of Hamiltonian mechanics proposed by
Nambu~\cite{N73}.
A \emph{Nambu--Poisson algebra} is a vector space with a commutative associative multiplication and an
$n$-ary skew-symmetric multiplication satisfying the Filippov identity, which are related by the
generalized Leibniz rule; for $n=2$ one recovers Poisson algebras.
The classical example is the polynomial algebra $\C[x_1,x_2,x_3]$ with the Jacobian bracket
\begin{equation*}
[f,g,h]=\det\begin{pmatrix}
\partial_1f&\partial_2f&\partial_3f\\
\partial_1g&\partial_2g&\partial_3g\\
\partial_1h&\partial_2h&\partial_3h
\end{pmatrix}
\end{equation*}
\noindent (see \cite{N73,T94}); the Leibniz rule is the product rule for the derivations $f\mapsto[f,g,h]$.
Filippov classified the $(n+1)$-dimensional $n$-Lie algebras~\cite{F85}; in particular, there is a
unique complex simple $4$-dimensional $3$-Lie algebra.
Leibniz-type identities of higher order also appear in other Poisson-type structures
\cite{DPP96,IDMM97}, and Nambu--Poisson structures on factor algebras of polynomial algebras were
constructed in~\cite{HC23}.

\bigskip
\noindent {\bf Nambu mechanics.}
Nambu's original motivation was to find a dynamics on a phase space of odd dimension that keeps the
main feature of Hamiltonian mechanics, the Liouville theorem on the conservation of the phase
volume~\cite{N73}.
In Nambu mechanics on $\mathbb R^3$ an observable $f$ evolves by $\dot f=\{f,H_1,H_2\}$ with two
Hamiltonians $H_1,H_2$, and the flow preserves the volume form.
The Euler equations of the free rigid body take this form, with the square of the angular momentum
(the Casimir function of $\mathfrak{so}_3$) and the kinetic energy as the two Hamiltonians, so that
both are conserved automatically~\cite{N73,T94}.
Takhtajan observed that the consistency of the time evolution requires the fundamental identity,
which is exactly the Filippov identity, and introduced Nambu--Poisson manifolds as the phase spaces
of this mechanics~\cite{T94}.
The Leibniz rule and the Filippov identity mean that the time evolution is a derivation of both
multiplications of the algebra of observables; thus $3$-Poisson algebras are the algebras of
observables of Nambu mechanics, in the same way as Poisson algebras are the algebras of observables
of Hamiltonian mechanics.
The geometry and dynamics of Nambu--Poisson manifolds have been actively studied since then: their
local structure~\cite{N98}, the dynamics of generalized Poisson and Nambu--Poisson
brackets~\cite{IDMM97,DPP96}, Filippov algebroids and multiplicative structures~\cite{GM00},
reduction by symmetries~\cite{D18}, the Hamilton--Jacobi theory~\cite{DS17}, and the description by
$n$-Lie--Rinehart algebras~\cite{BCZ25}.

\bigskip
\noindent{\bf Quantization.}
The quantization of Nambu mechanics turned out to be much harder than that of Hamiltonian mechanics.
Dito, Flato, Sternheimer and Takhtajan proposed a deformation quantization of Nambu mechanics, the
so-called Zariski quantization, based on the factorization of polynomials~\cite{DFST97}.
Quantized Nambu--Poisson manifolds were related to $n$-Lie algebras by DeBellis, S\"amann and
Szabo~\cite{DSS10} and studied in a $3$-Lie algebra reduced model in~\cite{DSS11}.

\bigskip
\noindent{\bf M2-branes.}
Another source of interest is M-theory.
Basu and Harvey proposed a generalization of the Nahm equation, governed by a ternary bracket, as the
equation describing M2-branes ending on an M5-brane~\cite{BH05}.
Bagger and Lambert~\cite{BL08} and Gustavsson~\cite{G09} constructed a supersymmetric gauge theory of
multiple M2-branes in which the gauge symmetry is given by a $3$-Lie algebra with an invariant
metric.
Papadopoulos~\cite{P08} and Gauntlett and Gutowski~\cite{GG08} proved that the simple
$4$-dimensional $3$-Lie algebra is the only indecomposable non-abelian $3$-Lie algebra with a
positive definite invariant metric, so it is the only possible gauge algebra in this setting.
Poisson-type algebras of currents also appear in two-dimensional integrable field theories, for
example as the chiral Poisson algebras of conformal $\mathcal E$-models~\cite{L23}, and double
Poisson brackets give Poisson structures on representation spaces~\cite{OS24,MF}.

\bigskip
\noindent{\bf $3$-Poisson algebras.}
For $n=3$, non-commutative ternary Nambu--Poisson algebras were constructed in~\cite{MA15}, the simple
linearly compact $n$-Nambu--Poisson algebras were classified in~\cite{CK16}, $3$-Nambu--Poisson
algebras were constructed from $3$-Lie algebras in~\cite{SJ18}, and $\mathcal O$-operators on them
were studied in~\cite{HS25}.
Following~\cite{k23}, we call ternary Nambu--Poisson algebras \emph{$3$-Poisson algebras}. 
In a $3$-Poisson algebra, for all $y,z$ the operator $[-,y,z]$ is a derivation of both
multiplications.
Hence a $3$-Poisson structure on a fixed commutative associative algebra $({\rm A},\cdot)$ is a $3$-Lie
multiplication on ${\rm A}$ all of whose operators $[-,y,z]$ are derivations of $({\rm A},\cdot)$, and
two such structures give isomorphic $3$-Poisson algebras if and only if they are conjugate under the
automorphism group $\Aut({\rm A})$.

\bigskip
\noindent{\bf The algebraic and geometric classification.}
The algebraic and geometric classification of low-dimensional algebras is a classical problem
\cite{ABCF23,AFM25,kz,BC99,FKS,FKS25,GRH,GRH3,KV20,k23,l24,MS}.
The algebraic classification describes the isomorphism classes of algebras of a given dimension;
the geometric classification describes the irreducible components of the variety of structure
constants, which are the closures of the orbits of rigid algebras and of the unions of orbits of
rigid families.
Degenerations generalize the In\"on\"u--Wigner contractions of Lie algebras~\cite{IW53}, which
describe the passage from one physical theory to its limit, as the passage from relativistic to
Galilean mechanics when the speed of light tends to infinity; for example,
the orbit closures of the $4$-dimensional complex Lie algebras were described in~\cite{BC99}, the
varieties of commutative associative algebras were studied in~\cite{Maz80,Poo08}, Filippov algebras
in~\cite{KV20}, and nilpotent Poisson algebras in~\cite{ABCF23,AFM25}.
For nilpotent algebras the main tool is the method of annihilator extensions~\cite{SS78}.
For $3$-Poisson algebras the cocycles are pairs of a symmetric bilinear and a skew-symmetric
trilinear map linked by the Leibniz rule.
On the geometric side, orbits have dimension \mbox{$n^2-\dim\Der$}, and non-degenerations are proved by
closed Borel-stable conditions on the structure constants~\cite{GRH,MS}.
In this language, Theorem~C below shows that the orbit closure of the simple $3$-Lie algebra
$\mathfrak s$ of the BLG model is an irreducible component of the variety of $4$-dimensional
$3$-Poisson algebras.

\bigskip
\noindent{\bf Our results.}
We classify the complex $4$-dimensional $3$-Poisson algebras (Theorem~A: seven algebras, none of them
simple) and the complex $5$-dimensional nilpotent $3$-Poisson algebras (Theorem~B: $38$ algebras and
the family $\PP_{39}^{\alpha}$) with both multiplications non-zero.
The corresponding varieties have dimensions $16$ and $20$ and seven and five irreducible components
(Theorems~C and~D); one of the components in dimension~$4$ is the orbit closure of the
simple $3$-Lie algebra $\mathfrak s$.
All multiplication tables of Theorems~A and~B were checked formally in Lean~4.

\newpage 
\section{The algebraic classification of \texorpdfstring{$3$}{3}-Poisson algebras}\label{sec:alg}

All vector spaces are over the complex field $\C$.

\begin{definition}[see~\cite{T94}]
A \emph{$3$-Poisson algebra} is a vector space ${\rm P}$ equipped with a commutative associative
multiplication $\cdot$ and a $3$-Lie multiplication $[-,-,-]$, that is, a skew-symmetric trilinear
multiplication satisfying the Filippov identity
\begin{equation*}
\big[x_1,x_2,[y_1,y_2,y_3]\big]=\big[[x_1,x_2,y_1],y_2,y_3\big]+\big[y_1,[x_1,x_2,y_2],y_3\big]+\big[y_1,y_2,[x_1,x_2,y_3]\big],
\end{equation*}
such that the Leibniz rule holds:
\begin{equation}\label{id1}
[x_1\cdot x_2,y_2,y_3]=[x_1,y_2,y_3]\cdot x_2+x_1\cdot[x_2,y_2,y_3].
\end{equation}
\end{definition}

In other words, for all $y,z\in{\rm P}$ the operator $[-,y,z]$ is a derivation of both
multiplications.
An \emph{ideal} of ${\rm P}$ is a subspace $I$ with $I\cdot{\rm P}+[I,{\rm P},{\rm P}]\subseteq I$.
The \emph{annihilator} of ${\rm P}$ is the ideal
$\Ann({\rm P})=\big\{x\in{\rm P}:\ x\cdot{\rm P}=[x,{\rm P},{\rm P}]=0\big\}$.
If ${\rm P}=I\oplus\C x$ is a direct sum of ideals with $x\in\Ann({\rm P})$, then $\C x$ is called
an \emph{annihilator component} of ${\rm P}$.
An algebra with non-zero products is \emph{simple} if it has no ideals different from $0$ and ${\rm P}$.

\begin{definition}
A $3$-Poisson algebra ${\rm P}$ is \emph{nilpotent} if ${\rm P}^{(m)}=0$ for some $m\geq 2$, where
${\rm P}^{(1)}={\rm P}$ and, for $n\geq 2$,
\begin{equation*}
{\rm P}^{(n)}=\sum_{i+j=n}{\rm P}^{(i)}\cdot{\rm P}^{(j)}+
\sum_{i+j+k=n+1}[{\rm P}^{(i)},{\rm P}^{(j)},{\rm P}^{(k)}].
\end{equation*}
\end{definition}

We write ${\rm P}^2={\rm P}^{(2)}={\rm P}\cdot{\rm P}+[{\rm P},{\rm P},{\rm P}]$.
If ${\rm P}$ is nilpotent and ${\rm P}\neq 0$, then $\Ann({\rm P})\neq 0$, and ${\rm P}$ is nilpotent
if and only if ${\rm P}/\Ann({\rm P})$ is nilpotent.

In the multiplication tables we write only the non-zero products of basis vectors, up to the
commutativity of the multiplication and the skew-symmetry of the bracket, and we use the short
notation $[ijk]=[e_i,e_j,e_k]$; for example, $[123]=e_4$ means $[e_1,e_2,e_3]=e_4$.
Unless otherwise stated, the parameters of the families of algebras are arbitrary complex numbers.

\subsection{Complex four-dimensional \texorpdfstring{$3$}{3}-Poisson algebras}\label{sec:alg4}

\begin{theoremA}
Let ${\rm P}$ be a complex $4$-dimensional $3$-Poisson algebra with non-zero commutative associative
multiplication and non-zero $3$-Lie multiplication.
Then ${\rm P}$ is isomorphic to one and only one of the following algebras:

\begin{longtable}{l|l|l}
\hline
& commutative associative multiplication & $3$-Lie multiplication\\
\hline
\endfirsthead
\hline
& commutative associative multiplication & $3$-Lie multiplication\\
\hline
\endhead
$\mathcal P^{4}_{1}$ & $e_{1} e_{1} = e_{1}$ & $[234] = e_{2}$ \\
$\mathcal P^{4}_{2}$ & $e_{1} e_{1} = e_{2}$ & $[134] = e_{3}$ \\
$\mathcal P^{4}_{3}$ & $e_{1} e_{1} = e_{2}$ & $[134] = e_{2}$ \\
$\mathcal P^{4}_{4}$ & $e_{1} e_{2} = e_{3}$ & $[124] = e_{4}$ \\
$\mathcal P^{4}_{5}$ & $e_{1} e_{2} = e_{3}$ & $[124] = e_{3}$ \\
$\mathcal P^{4}_{6}$ & $e_{1} e_{2} = e_{4},\ e_{3} e_{3} = e_{4}$ & $[123] = e_{4}$ \\
$\mathcal P^{4}_{7}$ & $e_{1} e_{1} = e_{1},\ e_{1} e_{2} = e_{2},\ e_{1} e_{3} = e_{3},\ e_{1} e_{4} = e_{4}$ & $[234] = e_{2}$ \\
\hline
\end{longtable}
\noindent The algebras $\PP^4_{3}$, $\PP^4_{5}$ and $\PP^4_{6}$ are nilpotent; the other ones are not.
\end{theoremA}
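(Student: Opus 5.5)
The plan is to fix the $3$-Lie part first and then find the compatible commutative associative products. By Filippov's classification of $(n+1)$-dimensional $n$-Lie algebras~\cite{F85}, a non-zero $4$-dimensional complex $3$-Lie algebra is isomorphic to one of a short list. Up to the choice of basis, the bracket is encoded by the map $e_i\mapsto$ the coefficient vector of $[e_j,e_k,e_l]$, i.e. by a matrix $B$ (via the Hodge-type identification $\Lambda^3 V\cong V^*$). The Filippov identity forces strong restrictions on $B$, and the resulting cases are:
\[
[234]=e_1;\quad [234]=e_2 \ \text{(i.e.\ } [134]\text{-type with image in the bracket)};\quad [123]=e_4,\ [124]=e_3;
\]
together with the analogous rank-$2$, rank-$3$ and rank-$4$ cases, the last being the simple algebra $\mathfrak s$.

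For each of these $3$-Lie algebras $\mathfrak L$ I would compute the space of commutative associative products $\cdot$ such that every $\mathrm{ad}_{y,z}=[-,y,z]$ is a derivation of $\cdot$. This condition is linear in the structure constants of $\cdot$. I would therefore first solve the linear Leibniz system, then impose associativity on the (usually small) solution space. Finally I would take orbits under $\Aut(\mathfrak L)$, which by the remark in the introduction is exactly the isomorphism problem.

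The key structural observation is the following. The derivations $\mathrm{ad}_{y,z}$ span an image containing $\mathfrak L^2=[\mathfrak L,\mathfrak L,\mathfrak L]$. Leibniz then forces $\mathfrak L^2\cdot \mathfrak L$ and $\mathfrak L\cdot\mathfrak L$ to interact only through $\mathrm{ad}$-stable subspaces, and a unit element $u$ must satisfy $[u,\cdot,\cdot]=0$ because derivations kill idempotent units.

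For $\mathfrak s$ and for the rank-$3$ and rank-$4$ brackets I expect to show that only $\cdot=0$ survives. The ad-operators span $\mathfrak{so}_4$ or a large subalgebra, and any product invariant under such derivations must vanish in dimension~$4$. This is consistent with the statement that none of the algebras is simple.

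For the bracket $[234]=e_1$ (central image), the derivations $\mathrm{ad}_{e_3,e_4}$ etc.\ send $e_2\mapsto \pm e_1$. The Leibniz system then gives the nilpotent-type products; I expect to land on $\PP^4_6$-like tables after a change of basis, and I must also check the mixed configurations leading to $\PP^4_2,\PP^4_3$.

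For $[234]=e_2$ (non-nilpotent bracket with $\mathrm{ad}_{e_3,e_4}$ acting diagonally on $e_2$), I would use the eigenvalue decomposition of $\mathrm{ad}_{e_3,e_4}$: products must have weights adding up, so $e_2\cdot e_2=0$ and $e_2\cdot e_1\in\C e_2$. Splitting by whether $e_1$ is idempotent yields $\PP^4_1$, $\PP^4_7$ (where $e_1$ is a unit on the span) and $\PP^4_4$, $\PP^4_5$.

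The main obstacle is the final orbit/normal-form step together with the proof of the ``one and only one'' clause. The Leibniz-plus-associativity solutions come in families depending on several parameters, and one must normalize them by $\Aut(\mathfrak L)$, which is small for the solvable brackets. I would handle uniqueness with invariants: $\dim{\rm P}^2$, $\dim\Ann({\rm P})$, $\dim({\rm P}\cdot{\rm P})$, $\dim[{\rm P},{\rm P},{\rm P}]$, the existence of idempotents, whether $[{\rm P},{\rm P},{\rm P}]\subseteq {\rm P}\cdot{\rm P}$, and nilpotency. For instance, $\PP^4_2$ and $\PP^4_3$ differ by whether the bracket image coincides with ${\rm P}\cdot{\rm P}$. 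The nilpotency claim is read off directly: $\PP^4_1,\PP^4_7$ have an idempotent, and $\PP^4_2,\PP^4_4$ have a bracket $[e_i,e_3,e_4]=e_i$-type eigenvector action, so ${\rm P}^{(m)}$ never vanishes. In the remaining three tables all products land in a central element or a square-zero chain, so ${\rm P}^{(4)}=0$.
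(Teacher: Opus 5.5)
Your route is genuinely different from the paper's, and it can be made to work. You fix the $3$-Lie part from Filippov's list, solve the linear Leibniz system for $\cdot$, then impose associativity and take $\Aut(\mathfrak L)$-orbits. The paper instead fixes the commutative associative part, shows with (L1)--(L3) that only ${\rm A}_{03},{\rm A}_{04},{\rm A}_{05},{\rm A}_{14},{\rm A}_{22}$ admit a non-zero Leibniz bracket (always of the form $\omega(x,y,z)v$, so the Filippov identity is automatic), and then solves small $\Aut({\rm A})$-orbit problems.

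As written, though, your proposal leaves a whole family of brackets open. The brackets with $\dim[{\rm P},{\rm P},{\rm P}]=2$ are the family $\mathfrak h^{\alpha}$: $[134]=e_1$, $[234]=\alpha e_2$ with $\alpha\neq0$ (your $[123]=e_4$, $[124]=e_3$ is $\mathfrak h^{-1}$), together with the non-diagonalizable bracket $[134]=e_1$, $[234]=e_1+e_2$. You never dispose of them: your vanishing claim covers only ranks $3$ and $4$. The justification you give is also false as stated. For the rank-$3$ bracket $[123]=e_4$, $[124]=-e_3$, $[134]=e_2$, the inner derivations are $\mathfrak{so}_3$ on $N=\langle e_2,e_3,e_4\rangle$ plus all maps $e_1\mapsto N$ (killing $N$). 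The non-associative product $e_1e_1=2e_1$, $e_1n=n$, $N\cdot N=0$ is compatible with all of them, so it is associativity, not invariance, that forces $\cdot=0$.

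The missing case is short. Let $U=[{\rm P},{\rm P},{\rm P}]$ and let $W$ be a complement.
\begin{itemize}
\item For a basis $w,w'$ of $W$, the operator $[-,w,w']$ is invertible on $U$ and zero on $W$. Its generalized eigenspaces give $U\cdot W\subseteq U$ and $W\cdot W\subseteq W$.
\item The operators $[-,u,w]$ span all maps $W\to U$ (extended by $0$ on $U$). These derivations force $U\cdot U=0$, $u\cdot w=\lambda(w)u$ and $w\cdot w'=\lambda(w)w'+\lambda(w')w$.
\item Associativity then kills $\lambda$: taking $\lambda(w_0)=1$ and $0\neq w_1$ with $\lambda(w_1)=0$ gives $(w_0w_0)w_1=2w_1\neq w_1=w_0(w_0w_1)$. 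Hence $\lambda=0$ and $\cdot=0$.
\end{itemize}

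Second, your assignment of algebras to the two surviving brackets is wrong, so these cases have evidently not been worked out. The $3$-Lie part of $\PP^4_2$ is $\mathfrak h^0$ (there $[e_3,e_4,e_1]=e_3$), not the central-image algebra $\mathfrak n_1$. The $3$-Lie part of $\PP^4_5$ is $\mathfrak n_1$ ($e_3$ is central for the bracket), not $\mathfrak h^0$. Carrying out your plan actually gives the following.
\begin{itemize}
\item For $[234]=e_1$: the derivations $e_j\mapsto e_1$ ($j=2,3,4$) and associativity force $e_1\cdot{\rm P}=0$ and $x\cdot y=\gamma(x,y)e_1$, with $\gamma$ a symmetric form on $\langle e_2,e_3,e_4\rangle$. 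Since $\GL_3(\C)\subseteq\Aut(\mathfrak n_1)$, only $\rank\gamma\in\{1,2,3\}$ remains, giving $\PP^4_3,\PP^4_5,\PP^4_6$. There are no ``mixed configurations''.
\item For $[234]=e_2$: one gets $e_2\cdot x=\lambda(x)e_2$ with $\lambda$ a character of $\langle e_1,e_3,e_4\rangle$. If $\lambda\neq0$, a multiple of $e_1$ is the unit, giving $\PP^4_7$. If $\lambda=0$, either $\C e_1$ contains an idempotent, giving $\PP^4_1$, or $e_1\cdot{\rm P}=0$ and the product is a symmetric form on $\langle e_3,e_4\rangle$ with values in $\C e_1$ of rank $1$ or $2$, giving $\PP^4_2$ and $\PP^4_4$.
\end{itemize}

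With these repairs your approach is a valid alternative. It trades the list of $29$ commutative algebras and the (L1)--(L3) casework for a handful of $3$-Lie types and a weight argument. The cost is a quadratic associativity step, playing the role of the Filippov check that is automatic in the paper's route.
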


The rest of this subsection contains the proof of Theorem~A.

\subsubsection[The method of Aut(A)-orbits on 3-Lie structures]{The method of $\Aut({\rm A})$-orbits on $3$-Lie structures}

Let $({\rm A},\cdot)$ be a commutative associative algebra.
Denote by ${\rm Z}^{3}({\rm A},{\rm A})$ the set of all trilinear maps
$\theta:{\rm A}\times{\rm A}\times{\rm A}\to{\rm A}$ such that
\begin{enumerate}[(i)]
\item $\theta$ is skew-symmetric;
\item $\theta$ satisfies the Filippov identity (with $[-,-,-]$ replaced by $\theta$);
\item $\theta$ satisfies the Leibniz rule
$\theta(x\cdot y,z,w)=\theta(x,z,w)\cdot y+x\cdot\theta(y,z,w)$ for all $x,y,z,w\in{\rm A}$.
\end{enumerate}
Note that ${\rm Z}^{3}({\rm A},{\rm A})\neq\emptyset$, since $0\in{\rm Z}^{3}({\rm A},{\rm A})$; in general
it is not a vector space, because the Filippov identity is quadratic in $\theta$.
For $\theta\in{\rm Z}^{3}({\rm A},{\rm A})$ we write ${\rm A}_\theta=\big({\rm A},\cdot,[-,-,-]_\theta\big)$,
where $[x,y,z]_\theta=\theta(x,y,z)$.
By definition, ${\rm A}_\theta$ is a $3$-Poisson algebra, and every $3$-Poisson algebra
$\big({\rm A},\cdot,[-,-,-]\big)$ is of this form: it is ${\rm A}_\theta$ for $\theta(x,y,z)=[x,y,z]$.

The automorphism group $\Aut({\rm A})$ of $({\rm A},\cdot)$ acts on ${\rm Z}^{3}({\rm A},{\rm A})$ by
\begin{equation*}
(\theta*\phi)(x,y,z)=\phi^{-1}\big(\theta\big(\phi(x),\phi(y),\phi(z)\big)\big),\qquad
\phi\in\Aut({\rm A}),\ \theta\in{\rm Z}^{3}({\rm A},{\rm A}).
\end{equation*}
It is clear that $\theta*\phi$ again satisfies (i)--(iii).

\begin{lemma}\label{isom}
Let $({\rm A},\cdot)$ be a commutative associative algebra and
$\theta,\vartheta\in{\rm Z}^{3}({\rm A},{\rm A})$.
Then ${\rm A}_\theta$ and ${\rm A}_\vartheta$ are isomorphic if and only if
$\theta*\phi=\vartheta$ for some $\phi\in\Aut({\rm A})$.
\end{lemma}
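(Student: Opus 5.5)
The plan is to unwind the definitions directly in both directions, using only that an isomorphism of $3$-Poisson algebras is a linear bijection preserving both multiplications.

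For the ``if'' direction, suppose $\theta*\phi=\vartheta$ for some $\phi\in\Aut({\rm A})$. We claim that $\phi$ itself is an isomorphism ${\rm A}_\vartheta\to{\rm A}_\theta$. It preserves $\cdot$ because it lies in $\Aut({\rm A})$. Applying $\phi$ to the identity $\vartheta(x,y,z)=\phi^{-1}\big(\theta(\phi(x),\phi(y),\phi(z))\big)$ gives
\[
\phi\big([x,y,z]_\vartheta\big)=[\phi(x),\phi(y),\phi(z)]_\theta,
\]
so $\phi$ also preserves the bracket. Hence ${\rm A}_\theta\cong{\rm A}_\vartheta$.

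For the ``only if'' direction, let $\psi:{\rm A}_\vartheta\to{\rm A}_\theta$ be an isomorphism of $3$-Poisson algebras; both algebras have the same underlying space ${\rm A}$. Since $\psi$ preserves $\cdot$ and is bijective, $\psi\in\Aut({\rm A})$. Preservation of the bracket reads $\psi(\vartheta(x,y,z))=\theta(\psi(x),\psi(y),\psi(z))$ for all $x,y,z$. Applying $\psi^{-1}$ to both sides gives exactly $\vartheta=\theta*\psi$. If the isomorphism is instead given in the direction ${\rm A}_\theta\to{\rm A}_\vartheta$, replace it by its inverse, which is again an isomorphism.

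There is no real obstacle here. The only point needing care is the direction convention in the definition of $*$: the automorphism realising the conjugation must be $\psi:{\rm A}_\vartheta\to{\rm A}_\theta$, not its inverse. Also, $\theta*\phi\in{\rm Z}^3({\rm A},{\rm A})$ was already noted before the lemma, so nothing more is needed.
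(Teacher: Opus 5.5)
Your proof is correct and is essentially the paper's argument: in both directions the conjugating automorphism is the isomorphism ${\rm A}_\vartheta\to{\rm A}_\theta$ itself, and the bracket condition $\phi(\vartheta(x,y,z))=\theta(\phi(x),\phi(y),\phi(z))$ is just $\vartheta=\theta*\phi$ rewritten. Your remark about passing to the inverse when the isomorphism is given in the other direction is a harmless addition that the paper leaves implicit.
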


\begin{proof}
If $\theta*\phi=\vartheta$, then
$\phi\big(\vartheta(x,y,z)\big)=\theta\big(\phi(x),\phi(y),\phi(z)\big)$ for all $x,y,z$.
Since $\phi$ also preserves the commutative associative multiplication,
$\phi:{\rm A}_\vartheta\to{\rm A}_\theta$ is an isomorphism.
Conversely, let $\phi:{\rm A}_\vartheta\to{\rm A}_\theta$ be an isomorphism of $3$-Poisson algebras.
Then $\phi\in\Aut({\rm A})$ and
$\phi\big(\vartheta(x,y,z)\big)=\theta\big(\phi(x),\phi(y),\phi(z)\big)$, that is,
\begin{center}
$\vartheta(x,y,z)=\phi^{-1}\Big(\theta\big(\phi(x),\phi(y),\phi(z)\big)\Big)=(\theta*\phi)(x,y,z)$.
\end{center} 
\end{proof}

Hence the classification of the $3$-Poisson algebras with a given commutative associative part
$({\rm A},\cdot)$ reduces to the description of the $\Aut({\rm A})$-orbits on
${\rm Z}^{3}({\rm A},{\rm A})$, and we proceed as follows:
\begin{enumerate}[(1)]
\item compute ${\rm Z}^{3}({\rm A},{\rm A})$;
\item find the $\Aut({\rm A})$-orbits on ${\rm Z}^{3}({\rm A},{\rm A})$;
\item choose a representative $\theta$ of every orbit and write the algebra ${\rm A}_\theta$.
\end{enumerate}

\subsubsection{Four-dimensional commutative associative algebras}

\begin{theorem}[see~\cite{BG13}]\label{4-dim asscom}
Let ${\rm A}$ be a complex $4$-dimensional commutative associative algebra with non-zero
multiplication.
Then ${\rm A}$ is isomorphic to one of the following algebras:

{\small
\begin{longtable}{l|llllll}
\hline
\endhead
${\rm A}_{01}$ & $e_1e_1=e_1$ & $e_2e_2=e_2$ \\
\hline${\rm A}_{02}$ & $e_1e_1=e_1$ & $e_1e_2=e_2$ \\
\hline${\rm A}_{03}$ & $e_1e_1=e_1$ \\
\hline${\rm A}_{04}$ & $e_1e_1=e_2$ \\
\hline${\rm A}_{05}$ & $e_1e_2=e_3$ \\
\hline${\rm A}_{06}$ & $e_1e_1=e_2$ & $e_1e_2=e_3$ \\
\hline${\rm A}_{07}$ & $e_1e_1=e_1$ & $e_2e_2=e_2$ & $e_3e_3=e_3$ \\
\hline${\rm A}_{08}$ & $e_1e_1=e_1$ & $e_2e_2=e_2$ & $e_2e_3=e_3$ \\
\hline${\rm A}_{09}$ & $e_1e_1=e_1$ & $e_1e_2=e_2$ & $e_1e_3=e_3$ \\
\hline${\rm A}_{10}$ & $e_1e_1=e_1$ & $e_1e_2=e_2$ & $e_1e_3=e_3$ & $e_2e_2=e_3$ \\
\hline${\rm A}_{11}$ & $e_1e_1=e_1$ & $e_2e_2=e_3$ \\
\hline${\rm A}_{12}$ & $e_1e_1=-e_3$ & $e_1e_2=e_4$ & $e_2e_2=e_3$ \\
\hline${\rm A}_{13}$ & $e_1e_2=e_4$ & $e_2e_2=e_3$ \\
\hline${\rm A}_{14}$ & $e_1e_2=e_4$ & $e_3e_3=e_4$ \\
\hline${\rm A}_{15}$ & $e_1e_1=e_2$ & $e_1e_2=e_4$ & $e_3e_3=e_4$ \\
\hline${\rm A}_{16}$ & $e_1e_1=e_2$ & $e_1e_2=e_3$ & $e_1e_3=e_4$ & $e_2e_2=e_4$ \\
\hline${\rm A}_{17}$ & $e_1e_1=e_1$ & $e_2e_2=e_2$ & $e_3e_3=e_3$ & $e_4e_4=e_4$ \\
\hline${\rm A}_{18}$ & $e_1e_1=e_1$ & $e_2e_2=e_2$ & $e_3e_3=e_3$ & $e_3e_4=e_4$ \\
\hline${\rm A}_{19}$ & $e_1e_1=e_1$ & $e_1e_2=e_2$ & $e_3e_3=e_3$ & $e_3e_4=e_4$ \\
\hline${\rm A}_{20}$ & $e_1e_1=e_1$ & $e_2e_2=e_2$ & $e_2e_3=e_3$ & $e_2e_4=e_4$ \\
\hline${\rm A}_{21}$ & $e_1e_1=e_1$ & $e_2e_2=e_2$ & $e_2e_3=e_3$ & $e_2e_4=e_4$ & $e_3e_3=e_4$ \\
\hline${\rm A}_{22}$ & $e_1e_1=e_1$ & $e_1e_2=e_2$ & $e_1e_3=e_3$ & $e_1e_4=e_4$ \\
\hline${\rm A}_{23}$ & $e_1e_1=e_1$ & $e_1e_2=e_2$ & $e_1e_3=e_3$ & $e_1e_4=e_4$ & $e_2e_2=e_3$ \\
\hline${\rm A}_{24}$ & $e_1e_1=e_1$ & $e_1e_2=e_2$ & $e_1e_3=e_3$ & $e_1e_4=e_4$ & $e_2e_2=e_3$ & $e_2e_3=e_4$ \\
\hline${\rm A}_{25}$ & $e_1e_1=e_1$ & $e_1e_2=e_2$ & $e_1e_3=e_3$ & $e_1e_4=e_4$ & $e_2e_2=e_4$ & $e_3e_3=e_4$ \\
\hline${\rm A}_{26}$ & $e_1e_1=e_1$ & $e_2e_2=e_2$ & $e_3e_3=e_4$ \\
\hline${\rm A}_{27}$ & $e_1e_1=e_1$ & $e_1e_2=e_2$ & $e_3e_3=e_4$ \\
\hline${\rm A}_{28}$ & $e_1e_1=e_1$ & $e_2e_3=e_4$ \\
\hline${\rm A}_{29}$ & $e_1e_1=e_1$ & $e_2e_2=e_3$ & $e_2e_3=e_4$ \\
\hline
\end{longtable}
}
\end{theorem}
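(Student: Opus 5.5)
This is the classical classification of $4$-dimensional commutative associative algebras, cited from~\cite{BG13}. If I were to reprove it, I would split according to whether ${\rm A}$ is nilpotent, using the Wedderburn--Malcev and Peirce decompositions.

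\textbf{Non-nilpotent case.} Here ${\rm A}$ contains a non-zero idempotent. I would take a maximal family of orthogonal primitive idempotents $f_1,\dots,f_k$ and set $f=\sum f_i$. Then ${\rm A}=\bigoplus_i f_i{\rm A}\oplus (1-f){\rm A}$ (formally, the Peirce component annihilated by $f$). Each $f_i{\rm A}$ is a local unital algebra $\C f_i\oplus N_i$ with nilpotent $N_i$. The remaining component $N_0$ is a nilpotent commutative associative algebra, and it annihilates every $f_i{\rm A}$ by orthogonality.

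So the isomorphism type is determined by two pieces of data: the multiset of local unital algebras of dimensions $d_i$, and a nilpotent algebra of dimension $4-\sum d_i$. The local unital algebras of dimension at most $4$ correspond to nilpotent algebras of dimension at most $3$ (their maximal ideals). These are:
\begin{itemize}
\item the zero algebras;
\item $e_1e_1=e_2$;
\item $e_1e_1=e_2$, $e_1e_2=e_3$;
\end{itemize}
together with the local $4$-dimensional ones, whose maximal ideals are the $3$-dimensional nilpotent algebras with $N^2\neq0$ classified below.

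Enumerating the partitions of $4$ minus the dimension of $N_0$, with all choices of components, should produce ${\rm A}_{01}$--${\rm A}_{03}$, ${\rm A}_{07}$--${\rm A}_{11}$ and ${\rm A}_{17}$--${\rm A}_{29}$. For example, ${\rm A}_{22}$--${\rm A}_{25}$ are the unital local algebras whose maximal ideal is respectively zero, $e_2e_2=e_3$, $e_2e_2=e_3,\ e_2e_3=e_4$, or $e_2e_2=e_3e_3=e_4$.

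\textbf{Nilpotent case.} I would proceed by dimension of ${\rm A}^2$ and use annihilator (central) extensions, in the spirit of~\cite{SS78}. Commutative associative nilpotent algebras of dimension $4$ are either:
\begin{itemize}
\item an extension of the zero algebra by a space of symmetric bilinear forms, i.e.\ ${\rm A}^3=0$, or
\item an extension of one of the $3$-dimensional nilpotent algebras.
\end{itemize}

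When ${\rm A}^3=0$ and $\dim{\rm A}^2=1$, ${\rm A}$ is given by one symmetric form modulo its radical. This yields ${\rm A}_{04}$ (rank $1$), ${\rm A}_{05}$ (rank $2$), ${\rm A}_{14}$ (rank $3$), and the rank $4$ case, which has no room.

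When $\dim{\rm A}^2=2$ with ${\rm A}^3=0$, I need pencils of two symmetric forms on a $2$-dimensional generating space. Classifying pencils by Kronecker--Segre type gives ${\rm A}_{12}$ (two distinct root lines, $e_1^2=-e_3$, $e_2^2=e_3$, $e_1e_2=e_4$) and ${\rm A}_{13}$ (a degenerate pencil).

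Nilpotent algebras with ${\rm A}^3\neq0$ come from central extensions of ${\rm A}_{04}$-type and ${\rm A}_{06}$-type algebras. These give ${\rm A}_{06}$, ${\rm A}_{15}$ and ${\rm A}_{16}$. Here I need to check the associativity cocycle condition and then quotient by the automorphism group action.

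\textbf{Main obstacle.} The main obstacle is completeness and non-redundancy in the nilpotent case, in particular the orbit analysis of $2$-dimensional subspaces of cocycles under $\GL$ of the base. I would handle it by the standard normal forms of symmetric pencils, and verify the distinctness of the $29$ algebras using invariants: dimensions of ${\rm A}^2$, ${\rm A}^3$, $\Ann({\rm A})$, and the number of idempotents.
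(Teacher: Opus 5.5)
The paper gives no proof of this statement. It imports the list from \cite{BG13}, where it arises as the Novikov algebras whose associated Lie algebra is abelian.

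Your route is different, self-contained and structural, and it is correct in substance. Splitting off local unital factors by a maximal family of orthogonal primitive idempotents reduces the non-nilpotent case to local algebras of dimension at most $4$ together with a nilpotent part of dimension at most $3$. Enumerating by the number of factors and their dimensions gives exactly the $21$ algebras ${\rm A}_{01}$--${\rm A}_{03}$, ${\rm A}_{07}$--${\rm A}_{11}$, ${\rm A}_{17}$--${\rm A}_{29}$. The nilpotent case gives exactly the remaining eight:
${\rm A}_{04},{\rm A}_{05},{\rm A}_{14}$ (one symmetric form of rank $1,2,3$), ${\rm A}_{12},{\rm A}_{13}$ (the two types of pencils of binary forms), and ${\rm A}_{06},{\rm A}_{15},{\rm A}_{16}$.

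Compared with the citation, your argument explains the shape of the list: which algebras are unital, which are local, and which contain idempotents. It also uses the same Peirce decomposition ${\rm A}=e{\rm A}\oplus{\rm A}_0$ that the paper needs anyway in Lemma~\ref{lem:idemp}. The citation only buys brevity.

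Several points need tightening.

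\emph{The $3$-dimensional case.} You need the classification of $3$-dimensional nilpotent algebras ($\C^3$; $e_1e_1=e_2$; $e_1e_2=e_3$; $e_1e_1=e_2,\ e_1e_2=e_3$), both for $N_0$ and for the maximal ideals of $4$-dimensional local factors. You say it is ``classified below'', but below you only treat dimension $4$.

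\emph{The case ${\rm A}^3\neq0$.} You should say why the base $e_1e_2=e_3$ does not contribute. Its cocycles are spanned by $\nabla_{11},\nabla_{12},\nabla_{22}$, so $e_3$ stays in the annihilator and every extension has ${\rm A}^3=0$. You should also note that ${\rm A}_{06}$ arises only as the null-filiform $3$-dimensional algebra plus an annihilator component, not from a cocycle satisfying the radical condition.

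\emph{Pencil type of ${\rm A}_{13}$.} The pencil $\langle x_2^2,x_1x_2\rangle$ is regular with a double degenerate member, not singular; a $2$-dimensional pencil of binary forms is never singular.

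\emph{Distinctness.} The invariants you propose (dimensions of ${\rm A}^2$, ${\rm A}^3$, $\Ann({\rm A})$, number of idempotents) do not separate ${\rm A}_{19},{\rm A}_{20},{\rm A}_{21}$, nor ${\rm A}_{22},\dots,{\rm A}_{25}$. Within each group the algebras are unital, with ${\rm A}^2={\rm A}^3={\rm A}$, $\Ann({\rm A})=0$ and the same number of idempotents. The right argument is that the decomposition into local factors is canonical, so you compare the maximal ideals of the factors. Irredundancy is not part of the statement, however, so this does not affect the proof.
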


\noindent 
Note that ${\rm A}_{04},{\rm A}_{05},{\rm A}_{06},{\rm A}_{12},\dots,{\rm A}_{16}$ are exactly
the nilpotent algebras of this list, and the other algebras contain a non-zero idempotent.

\subsubsection[Computation of Z3(A,A)]{Computation of ${\rm Z}^{3}({\rm A},{\rm A})$}

Let ${\rm A}$ be a commutative associative algebra and let $\theta$ be a skew-symmetric trilinear map
on ${\rm A}$ satisfying the Leibniz rule~(iii).
We repeatedly use the following elementary consequences of the Leibniz rule.
\begin{enumerate}[(L1)]
\item Taking $y=x=a$ in (iii): $\theta(a\cdot a,z,w)=2\,a\cdot\theta(a,z,w)$.
In particular, for $z=a$ we get
\begin{center}$\theta(a\cdot a,a,w)=0\quad\text{for all } a,w\in{\rm A}.$\end{center}
\item If $t\cdot{\rm A}=0$, then, taking $x=t$ in (iii), we obtain
$\theta(t,z,w)\cdot y=0$ for all $y$, i.e.
\begin{center}
$\theta(t,z,w)\in\Ann({\rm A}):=\{u:\ u\cdot{\rm A}=0\}$.\end{center}
\item If $e\cdot e=e$, then, by (L1), $\theta(e,z,w)=2e\cdot\theta(e,z,w)$; multiplying by $e$ we get
$e\cdot\theta(e,z,w)=2e\cdot\theta(e,z,w)$, so $e\cdot\theta(e,z,w)=0$ and hence $\theta(e,z,w)=0$.
Then (iii) with $x=e$ gives 
\begin{center}$\theta(e\cdot y,z,w)=e\cdot\theta(y,z,w)$.\end{center}
\end{enumerate}
Recall also that a skew-symmetric trilinear map on a vector space of dimension at most $2$ is zero;
hence, if $\theta(u,-,-)=0$ for all $u$ in a subspace $U$ with $\dim{\rm A}/U\leq2$, then $\theta=0$.

\begin{lemma}\label{lem:idemp}
Let ${\rm A}$ be a $4$-dimensional commutative associative algebra with a non-zero idempotent $e$,
and let $\theta\neq0$ be a skew-symmetric trilinear map satisfying the Leibniz rule.
Then ${\rm A}\cong{\rm A}_{03}$ or ${\rm A}\cong{\rm A}_{22}$, and in the basis of
Theorem~\ref{4-dim asscom} the only non-zero value of $\theta$ on basis vectors (up to
skew-symmetry) is
\[\theta(e_2,e_3,e_4)=\alpha e_2+\beta e_3+\gamma e_4,\qquad(\alpha,\beta,\gamma)\neq(0,0,0).\]
\end{lemma}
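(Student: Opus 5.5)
Our plan is to use the Peirce decomposition with respect to the idempotent $e$ together with (L3). Write ${\rm A}=e{\rm A}\oplus(1-e){\rm A}$, where $(1-e){\rm A}=\{x-e\cdot x\}$. Both summands are ideals of $({\rm A},\cdot)$, $e$ is the unit of $e{\rm A}$, and $e{\rm A}\cdot(1-e){\rm A}=0$. By (L3), $\theta(e,-,-)=0$ and $\theta(e\cdot y,z,w)=e\cdot\theta(y,z,w)$.

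\textbf{Step 1: $\theta$ vanishes on $e{\rm A}$.} For $y\in e{\rm A}$ and any $z,w$ we have $\theta(y,z,w)=\theta(e\cdot y,z,w)=e\cdot\theta(y,z,w)\in e{\rm A}$. For $y\in(1-e){\rm A}$ we have $e\cdot y=0$, so $e\cdot\theta(y,z,w)=\theta(0,z,w)=0$, i.e. $\theta(y,z,w)\in(1-e){\rm A}$. Take $y\in e{\rm A}$. If some argument of $\theta$ lies in $e{\rm A}$ and another in $(1-e){\rm A}$, then by skew-symmetry the value lies in both summands, hence is $0$. So $\theta$ restricted to $e{\rm A}$ is a Leibniz skew map on the unital algebra $B=e{\rm A}$, and it is $B$-linear in each argument: $\theta(b\cdot y,z,w)=b\cdot\theta(y,z,w)$ follows from Leibniz plus $\theta(b,z,w)\cdot y$... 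Rather than prove this directly, argue as follows. For $b\in B$, the operator $\theta(-,z,w)$ is a derivation of $B$ killing $e$. Every idempotent of $B$ plays the role of $e$ by (L3), so $\theta$ kills all idempotents of ${\rm A}$.

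\textbf{Step 2: reduce to the unital local case.} The algebra ${\rm A}$ decomposes as a product of local unital algebras and possibly a nilpotent part; the idempotents are the units of the local factors. Applying the above to each primitive idempotent $e_i$ shows that $\theta$ vanishes unless all three arguments lie in one Peirce component. Inside a local unital factor $L=\C e_i\oplus\mathfrak m$, the argument must lie in $\mathfrak m$ because $\theta(e_i,-,-)=0$. Hence $\theta\neq0$ forces a component $U$, either the nilpotent part $(1-\sum e_i){\rm A}$ or some $\mathfrak m$, with $\dim U\ge3$. Since $\dim{\rm A}=4$ and there is at least one idempotent, we need $\dim U=3$, and $U$ spans all but the $e$-direction.

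\textbf{Step 3: the two cases.} Either ${\rm A}=\C e\times N$ with $N$ nilpotent of dimension $3$, or ${\rm A}$ is local unital with $\dim\mathfrak m=3$. Then $\theta$ is determined by $\theta(u_1,u_2,u_3)$ for a basis of $U$. The Leibniz rule with $x,y\in U$ gives
\[
\theta(x\cdot y,z,w)=\theta(x,z,w)\cdot y+x\cdot\theta(y,z,w).
\]
If $U\cdot U\neq0$, pick $x,y$ with $x\cdot y\neq0$ and complete to a basis. We then need to show, via (L1) and (L2) applied within $U$, that a non-zero $\theta$ is impossible. For instance, for $a\in U$ with $a\cdot a\ne0$, (L1) gives $\theta(a\cdot a,a,w)=0$. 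This kills $\theta$ when $a,a^2,w$ span $U$. Otherwise, if $U^2=\C u$ with $u\cdot U=0$, (L2) together with the relation $\theta(u,\cdot,\cdot)=x\cdot\theta(y,\cdot,\cdot)+\dots$ lands in $U^2$ and propagates to force $\theta=0$. This case analysis over the 3-dimensional nilpotent and local structures (using Theorem~\ref{4-dim asscom}) is the main obstacle; we would go through the finite list ${\rm A}_{10},{\rm A}_{11},{\rm A}_{23}$–${\rm A}_{25},{\rm A}_{27}$–${\rm A}_{29}$ etc. directly.

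\textbf{Conclusion.} If $U\cdot U=0$, then ${\rm A}={\rm A}_{03}$ or ${\rm A}_{22}$. In the local case, the $\mathfrak m$-valued condition follows from $e\cdot\theta=\theta$ on $\mathfrak m$ being automatic. Leibniz then imposes no further constraints, since $U\cdot U=0$ and $e$ acts as identity or zero on $U$. So $\theta(e_2,e_3,e_4)$ is an arbitrary non-zero vector of $U={\rm span}(e_2,e_3,e_4)$, and all other basis values vanish.
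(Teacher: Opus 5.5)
Your Steps 1 and 2 are sound and amount to the paper's Peirce argument. The paper avoids the full structure theorem by applying the same reasoning to a non-identity idempotent when one exists, and otherwise concluding that ${\rm A}$ is local. Note also that Step 1 proves $\theta$ vanishes on mixed triples, not that it vanishes on $e{\rm A}$.

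The genuine gap is Step 3, which is the heart of the lemma: you never establish $U\cdot U=0$. You name the right tool, (L1) with $a,\ a\cdot a,\ w$ spanning $U$. But you then open an ``otherwise'' branch that is only gestured at (``propagates to force $\theta=0$''), and you defer to a case-by-case check that you do not carry out. Your list even contains ${\rm A}_{10}$ and ${\rm A}_{27}$, which your own Step 2 already excludes; the algebras with $U\cdot U\neq0$ that survive Step 2 are ${\rm A}_{11},{\rm A}_{28},{\rm A}_{29}$ and ${\rm A}_{23},{\rm A}_{24},{\rm A}_{25}$.

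The missing observation is that the (L1) argument \emph{always} applies. Since $U$ is nilpotent, $a\cdot a\neq0$ implies $a\cdot a\notin\C a$, for otherwise a scalar multiple of $a$ would be a non-zero idempotent. So $a,\ a\cdot a$ extend to a basis $a,\ a\cdot a,\ w$ of $U$. Since $\theta$ on $U^3$ is a multiple of a volume form, $\theta(a\cdot a,a,w)\neq0$ whenever $\theta\neq0$, contradicting (L1). Hence $a\cdot a=0$ for all $a\in U$, and polarization, $x\cdot y=\frac12\big((x+y)\cdot(x+y)-x\cdot x-y\cdot y\big)$, gives $U\cdot U=0$. Your ``otherwise'' case is therefore vacuous and no list is needed. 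As written, however, the step that pins down ${\rm A}_{03}$ and ${\rm A}_{22}$ is not proved.

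There is a second, smaller gap in the local case. You say the values lie in $\mathfrak m$ because $e\cdot\theta=\theta$ is automatic. But when $e$ is the identity this holds for every vector, so it says nothing about the $e_1$-component of $v=\theta(e_2,e_3,e_4)$. That component must be killed by the Leibniz rule. Take $x=y=e_2$ and use $U\cdot U=0$; writing $v=ce_1+n$ with $n\in\langle e_2,e_3,e_4\rangle$, one gets $0=\theta(e_2\cdot e_2,e_3,e_4)=2e_2\cdot v=2ce_2$, so $c=0$. This is a real constraint: $\theta(e_2,e_3,e_4)=e_1$ violates the Leibniz rule on ${\rm A}_{22}$. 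So your remark that Leibniz imposes no further conditions is correct only after $v\in\langle e_2,e_3,e_4\rangle$ has been shown. In the split case ${\rm A}=\C e\times N$, the inclusion $v\in N$ does follow from your Step 1.
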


\begin{proof}
Put ${\rm A}_1=e\cdot{\rm A}$ and ${\rm A}_0=\{x:\ e\cdot x=0\}$.
Then ${\rm A}={\rm A}_1\oplus{\rm A}_0$ (write $x=e\cdot x+(x-e\cdot x)$),
${\rm A}_1\cdot{\rm A}_0=0$, and $e$ is the identity element of ${\rm A}_1$.
By (L3), 
\begin{center}$\theta(e,-,-)=0$ and $\theta(e\cdot x,z,w)=e\cdot\theta(x,z,w)$.\end{center}
Using skew-symmetry, we also have
\begin{center}
$\theta(x,e\cdot y,z)=-\theta(e\cdot y,x,z)=-e\cdot\theta(y,x,z)=e\cdot\theta(x,y,z)$,\end{center}  and similarly
for the third argument.
Consequently:
\begin{enumerate}
\item if $x\in{\rm A}_0$, then $e\cdot\theta(x,y,z)=\theta(e\cdot x,y,z)=0$, so
$\theta(x,y,z)\in{\rm A}_0$; moreover, for $y\in{\rm A}_1$ we get
$\theta(x,y,z)=\theta(x,e\cdot y,z)=e\cdot\theta(x,y,z)=0$;
\item if $x,y,z\in{\rm A}_1$, then $\theta(x,y,z)=\theta(e\cdot x,y,z)=e\cdot\theta(x,y,z)\in{\rm A}_1$.
\end{enumerate}
Thus $\theta$ vanishes on all "mixed" triples, and it is the sum of its restrictions to ${\rm A}_1$
and to ${\rm A}_0$.
The restriction to ${\rm A}_1$ vanishes when one of the arguments is $e$, so it is zero unless
$\dim{\rm A}_1/\C e\geq3$, i.e.\ ${\rm A}_1={\rm A}$ and $e$ is the identity of ${\rm A}$.
The restriction to ${\rm A}_0$ is zero unless $\dim{\rm A}_0\geq3$, i.e.\ ${\rm A}_1=\C e$.
So we have two cases:
\begin{enumerate}[(a)]
\item ${\rm A}_1=\C e$; we put $N={\rm A}_0$, which is a $3$-dimensional ideal;
\item $e$ is the identity element of ${\rm A}$.
\end{enumerate}
If ${\rm A}$ contains a non-zero idempotent different from the identity element, we apply the above
argument to it and arrive at case (a).
Otherwise $0$ and $1$ are the only idempotents of ${\rm A}$, so ${\rm A}$ is a local algebra and
${\rm A}=\C1\oplus N$, where $N$ is its (nilpotent) maximal ideal.
In both cases ${\rm A}=\C e\oplus N$, where $N$ is a $3$-dimensional ideal, $\theta(e,-,-)=0$, and
therefore
\[\theta(x,y,z)=\omega(x,y,z)\,v\qquad(x,y,z\in N),\]
where $\omega$ is a fixed non-zero skew-symmetric trilinear form on $N$ (a ``volume form'') and
$v\in{\rm A}$ is a non-zero vector.

\begin{enumerate}
\item[{\bf Step 1:}] $N\cdot N=0$.
First, $N$ has no non-zero idempotents: if $f\in N$ were a non-zero idempotent, then
$\theta(f,-,-)=0$ by (L3), which is impossible because $\omega(f,y,z)\neq0$ for suitable $y,z\in N$.
Assume that $a\cdot a\neq0$ for some $a\in N$.
Then $a$ and $a\cdot a$ are linearly independent (if $a\cdot a=\lambda a$ with $\lambda\neq0$, then
$\lambda^{-1}a$ is an idempotent), so we can choose $u\in N$ such that $a,a\cdot a,u$ is a basis of
$N$.
By (L1), $0=\theta(a\cdot a,a,u)=\omega(a\cdot a,a,u)\,v$ with $\omega(a\cdot a,a,u)\neq0$, a
contradiction.
Hence $a\cdot a=0$ for all $a\in N$ and, by polarization,
\begin{center}$x\cdot y=\frac12\big((x+y)\cdot(x+y)-x\cdot x-y\cdot y\big)=0$ for all $x,y\in N$.\end{center}
Therefore ${\rm A}=\C e\oplus N$ with $e\cdot N=0$ and $N\cdot N=0$ (case (a)), i.e.\
${\rm A}\cong{\rm A}_{03}$, or ${\rm A}=\C1\oplus N$ with $N\cdot N=0$ (case (b)), i.e.\
${\rm A}\cong{\rm A}_{22}$.
In both cases $e=e_1$ and $N=\langle e_2,e_3,e_4\rangle$, so the only possibly non-zero value of
$\theta$ is $\theta(e_2,e_3,e_4)=v$.

\item[{\bf Step 2:}] $v\in N$.
In case (a) this was shown above ($\theta$ takes values in ${\rm A}_0=N$).
In case (b) write $v=c e_1+n$ with $n\in N$; then (iii) with $x=y=e_2$, $z=e_3$, $w=e_4$
gives \begin{center}
$0=\theta(e_2\cdot e_2,e_3,e_4)=2e_2\cdot v=2ce_2$, so $c=0$.
\end{center}
\end{enumerate}

Conversely, let ${\rm A}={\rm A}_{03}$ or ${\rm A}_{22}$ and let $\theta(e_2,e_3,e_4)=v\in N$ be the only
non-zero value of $\theta$; we check the Leibniz rule (iii), which is linear in $x$ and in $y$.
If $x,y\in N$, then $x\cdot y=0$ and both terms of the right hand side lie in $N\cdot N=0$.
For ${\rm A}_{03}$ and $x=e_1$: the left hand side $\theta(e_1\cdot y,z,w)$ is zero, because
$e_1\cdot y\in\C e_1$; the right hand side is
\begin{center}$\theta(e_1,z,w)\cdot y+e_1\cdot\theta(y,z,w)=0$,\end{center} because
$\theta(e_1,-,-)=0$ and $e_1\cdot N=0$.
For ${\rm A}_{22}$ and $x=e_1$ (the identity element) both sides equal $\theta(y,z,w)$.
The case $y=e_1$ is symmetric.
\end{proof}

\begin{lemma}\label{lem:nilp}
Let ${\rm A}$ be one of the nilpotent algebras
${\rm A}_{04},{\rm A}_{05},{\rm A}_{06},{\rm A}_{12},\dots,{\rm A}_{16}$, and let $\theta\neq0$ be a
skew-symmetric trilinear map satisfying the Leibniz rule.
Then ${\rm A}\in\{{\rm A}_{04},{\rm A}_{05},{\rm A}_{14}\}$, and the only non-zero value of $\theta$ on
basis vectors (up to skew-symmetry) is given below
\begin{longtable}{l|l}
\hline
${\rm A}$ & the only non-zero value of $\theta$\\
\hline
\endfirsthead
\hline
${\rm A}$ & the only non-zero value of $\theta$\\
\hline
\endhead
${\rm A}_{04}$ & $\theta(e_1,e_3,e_4)=\alpha e_2+\beta e_3+\gamma e_4$\\
\hline
${\rm A}_{05}$ & $\theta(e_1,e_2,e_4)=\alpha e_3+\beta e_4$\\
\hline
${\rm A}_{14}$ & $\theta(e_1,e_2,e_3)=\alpha e_4$\\
\hline
\end{longtable}
\end{lemma}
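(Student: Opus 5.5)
We treat each of the eight nilpotent algebras separately, combining the consequences (L1) and (L2) of the Leibniz rule with the remark that $\theta=0$ as soon as $\theta(u,-,-)=0$ for all $u$ in a subspace of codimension at most $2$. Throughout, write $\Ann({\rm A})=\{u:\ u\cdot{\rm A}=0\}$ and ${\rm A}^2={\rm A}\cdot{\rm A}$. The basic observation is the following. If $t\in{\rm A}^2$, say $t=\sum x_i\cdot y_i$, then the Leibniz rule gives
\[\theta(t,z,w)\in{\rm A}\cdot\theta({\rm A},{\rm A},{\rm A})\subseteq {\rm A}^2.\]
Iterating, elements of deep powers ${\rm A}^k$ send $\theta$ into ${\rm A}^k$. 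By (L2), elements of $\Ann({\rm A})$ send $\theta$ into $\Ann({\rm A})$.

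The main tool for killing components is (L1): $\theta(a\cdot a,a,w)=0$, together with its polarized form
\[\theta(a\cdot b,c,w)+\theta(b\cdot c,a,w)+\theta(c\cdot a,b,w)=0,\]
obtained by taking $a+b+c$ and collecting terms. We also use the Leibniz rule applied to products that vanish: if $x\cdot y=0$, then $\theta(x,z,w)\cdot y+x\cdot\theta(y,z,w)=0$.

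\textbf{The three surviving algebras.} For ${\rm A}_{04}$ we have ${\rm A}^2=\C e_2=\Ann$-like. The relation $\theta(e_1e_1,e_1,w)=0$ gives $\theta(e_2,e_1,w)=0$ for all $w$. The Leibniz rule with $x=y=e_1$ gives $\theta(e_2,z,w)=2e_1\cdot\theta(e_1,z,w)$, and this kills $\theta(e_2,e_3,e_4)$ after comparing with the equations coming from $e_1e_3=0$ and $e_1e_4=0$. For instance, $e_1\cdot\theta(e_3,z,w)=0$ forces the $e_1$-coefficient of $\theta(e_3,z,w)$ to vanish, and similarly for $e_4$. This should leave only $\theta(e_1,e_3,e_4)$, with zero $e_1$-coefficient. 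The cases ${\rm A}_{05}$ and ${\rm A}_{14}$ are handled analogously, using respectively $e_1e_1=e_2e_2=0$ and the polarized (L1) with $e_3e_3=e_4=e_1e_2$. The latter gives $\theta(e_4,-,-)=0$ and pins the values into $\Ann({\rm A})=\C e_4$. Finally we check directly that the displayed maps satisfy the Leibniz rule; this is easy, since all products land in a square-zero annihilator.

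\textbf{The vanishing cases.} For ${\rm A}_{06}$, ${\rm A}_{12}$, ${\rm A}_{13}$, ${\rm A}_{15}$ and ${\rm A}_{16}$ we show $\theta=0$. In each case we aim to show $\theta(u,-,-)=0$ for $u$ in ${\rm A}^2$, or in a suitable $2$-dimensional subspace, and then that the remaining component $\theta(x,y,z)$ on a complement vanishes. For ${\rm A}_{06}$ and ${\rm A}_{16}$, which are generated by the single element $e_1$, the Leibniz rule expresses $\theta(e_1^k,z,w)$ through $\theta(e_1,z,w)$; combined with $\theta(e_1^2,e_1,w)=0$ and the filtration, this collapses everything. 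For ${\rm A}_{12}$ and ${\rm A}_{13}$, which have two generators and $\dim{\rm A}^2=2$, $\theta$ is determined by $\theta(e_1,e_2,e_k)$ and values on ${\rm A}^2$. The polarized (L1) with $a,b\in\{e_1,e_2\}$ should force these to be zero.

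I expect the main obstacle to be ${\rm A}_{15}$, since it resembles ${\rm A}_{14}$. There I would use $e_3\cdot e_3=e_1\cdot e_2=e_4$ and $e_1e_1=e_2$. Applying (L1) to $a=e_1$ gives $\theta(e_2,e_1,w)=0$, and the Leibniz rule gives $\theta(e_2,e_3,w)=2e_1\cdot\theta(e_1,e_3,w)$. Hence $\theta(e_1,e_2,e_3)=-\theta(e_2,e_1,e_3)=0$, and the only candidate value is $\theta(e_1,e_3,e_4)$, which lies in $\Ann({\rm A})=\C e_4$ by (L2) (since $e_4$ annihilates). Then the identity $\theta(e_2,e_3,e_4)=2e_1\cdot\theta(e_1,e_3,e_4)\in e_1\cdot\C e_4=0$ is consistent. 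So the decisive equation must instead come from polarized (L1) with $a=e_1$, $b=c=e_3$, or from the Leibniz rule for $x=e_3$, $y=e_3$: $\theta(e_4,e_1,w)=2e_3\cdot\theta(e_3,e_1,w)$. Expanding these carefully is the step I would check most closely.
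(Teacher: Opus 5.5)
Your strategy is the paper's: a case-by-case analysis with (L1), (L2) and the Leibniz rule. The polarized identity $\theta(ab,c,w)+\theta(bc,a,w)+\theta(ca,b,w)=0$ is correct and useful: with $a,b,c\in\{e_1,e_2\}$ it disposes of ${\rm A}_{12}$ and ${\rm A}_{13}$ completely, and ${\rm A}_{04}$ and ${\rm A}_{16}$ work as you describe. But as written, some cases are unfinished or rest on a relation that does not do the work.

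In ${\rm A}_{05}$, every basis triple contains $e_3$ or $e_4$, so (L2) already puts all values of $\theta$ into $\Ann({\rm A})=\langle e_3,e_4\rangle$. After that, the Leibniz identities for the vanishing products $e_1e_1=e_2e_2=0$ are vacuous. What kills $\theta_{123},\theta_{134},\theta_{234}$ is the product $e_1e_2=e_3$, via $\theta(e_3,z,w)=\theta(e_1,z,w)\cdot e_2+e_1\cdot\theta(e_2,z,w)=0$. Alternatively, your polarized identity with $(a,b,c)=(e_1,e_1,e_2)$ and $(e_2,e_2,e_1)$ gives $\theta(e_3,e_1,-)=\theta(e_3,e_2,-)=0$.

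In ${\rm A}_{14}$, the polarized identity only involves $\theta$ on ${\rm A}\cdot{\rm A}=\C e_4$. It therefore yields $\theta(e_4,-,-)=0$, but it cannot force $\theta_{123}=c_1e_1+c_2e_2+c_3e_3+c_4e_4$ into $\C e_4$. For that you need the full Leibniz rule:
\begin{itemize}
\item $e_1\cdot\theta_{123}=0$ from $e_1e_1=0$, giving $c_2=0$;
\item $e_2\cdot\theta_{123}=0$ from $e_2e_2=0$, giving $c_1=0$;
\item $2e_3\cdot\theta_{123}=\theta(e_4,e_1,e_2)=0$ from $e_3e_3=e_4$, giving $c_3=0$.
\end{itemize}

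For ${\rm A}_{15}$ you stop one line short, and there is no real obstacle. Your equation $\theta(e_4,e_1,w)=2e_3\cdot\theta(e_3,e_1,w)$ at $w=e_3$ gives $\theta_{134}=\theta(e_4,e_1,e_3)=0$. (The polarized identity with $a=e_1$, $b=c=e_3$ gives $\theta(e_4,e_1,-)=0$ at once, since $e_1e_3=0$.) Then $\theta_{234}=2e_1\cdot\theta_{134}=0$; this is exactly the paper's argument.

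Also, ${\rm A}_{06}$ is not generated by $e_1$: it is $\langle e_1,e_2,e_3\rangle\oplus\C e_4$ with $e_4$ an annihilator summand. So the one-generator collapse $\theta(x^a,x^b,x^c)=abc\,x^{a+b+c-3}\theta(x,x,x)=0$ does not apply verbatim. It still works, because each basis triple contains at least two of $e_1,e_1^2,e_1^3$, and the derivation property in those two slots reduces every value to $\theta(e_1,e_1,-)=0$.

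Finally, in the converse check the left-hand side vanishes because the displayed $\theta$ is zero on ${\rm A}\cdot{\rm A}$ (that is, on $\C e_2$, $\C e_3$, $\C e_4$ respectively), not merely because ${\rm A}\cdot{\rm A}\subseteq\Ann({\rm A})$. With these repairs your outline becomes a complete proof along the same lines as the paper's.
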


\begin{proof}
We consider the algebras one by one.
In every case we write $\theta_{ijk}=\theta(e_i,e_j,e_k)$, $i<j<k$; there are four such values.
\begin{enumerate}
\item[{\bf ${\rm A}_{04}$:}] $e_1e_1=e_2$, $\Ann({\rm A})=\langle e_2,e_3,e_4\rangle$.
By (L1) with $a=e_1$, $\theta(e_2,e_1,w)=0$, so $\theta_{123}=\theta_{124}=0$.
By (L1), $\theta_{234}=\theta(e_1e_1,e_3,e_4)=2e_1\cdot\theta_{134}$.
By (L2), $\theta_{134}=\theta(e_3,e_4,e_1)\in\Ann({\rm A})$, so $e_1\cdot\theta_{134}=0$ and
$\theta_{234}=0$.
Hence the only non-zero value is $\theta_{134}\in\langle e_2,e_3,e_4\rangle$.
Conversely, if $\theta_{134}\in\langle e_2,e_3,e_4\rangle$ is the only non-zero value, then both sides of
(iii) vanish: the left hand side because ${\rm A}\cdot{\rm A}=\C e_2$ and $\theta(e_2,-,-)=0$, and the
right hand side because all values of $\theta$ lie in $\Ann({\rm A})$.

\item[{\bf ${\rm A}_{05}$:}] $e_1e_2=e_3$, $\Ann({\rm A})=\langle e_3,e_4\rangle$.
By (L2), every $\theta_{ijk}$ with $k\in\{3,4\}$ lies in $\langle e_3,e_4\rangle$; in particular
$\theta_{124}=\alpha e_3+\beta e_4$.
Taking $x=e_1$, $y=e_2$ in (iii) we get
\[\theta(e_3,z,w)=\theta(e_1,z,w)\cdot e_2+e_1\cdot\theta(e_2,z,w).\]
For $(z,w)=(e_1,e_2)$ the right hand side is $0$, so $\theta_{123}=0$;
for $(z,w)=(e_1,e_4)$ it equals $e_1\cdot\theta(e_2,e_1,e_4)=-e_1\cdot\theta_{124}=0$, so
$\theta_{134}=0$;
for $(z,w)=(e_2,e_4)$ it equals $\theta_{124}\cdot e_2=0$, so $\theta_{234}=0$.
Conversely, if $\theta_{124}=\alpha e_3+\beta e_4$ is the only non-zero value, then both sides of (iii)
vanish: the left hand side because ${\rm A}\cdot{\rm A}=\C e_3$ and $\theta(e_3,-,-)=0$, and the
right hand side because all values of $\theta$ lie in $\Ann({\rm A})$.

\item[{\bf ${\rm A}_{06}$:}] $e_1e_1=e_2$, $e_1e_2=e_3$.
By (L1) with $a=e_1$, $\theta_{123}=\theta_{124}=0$.
Taking $x=e_1$, $y=e_2$, $(z,w)=(e_1,e_4)$ in (iii):
$\theta(e_3,e_1,e_4)=\theta(e_1,e_1,e_4)\cdot e_2+e_1\cdot\theta(e_2,e_1,e_4)=0$, so $\theta_{134}=0$.
Finally, by (L1), $\theta_{234}=2e_1\cdot\theta_{134}=0$. Hence $\theta=0$.

\item[{\bf ${\rm A}_{12}$:}] $e_1e_1=-e_3$, $e_1e_2=e_4$, $e_2e_2=e_3$.
By (L1) with $a=e_1$ and $a=e_2$ we get $\theta(e_3,e_1,w)=\theta(e_3,e_2,w)=0$, i.e.\
$\theta_{123}=\theta_{134}=\theta_{234}=0$.
Taking $x=e_1$, $y=e_2$, $(z,w)=(e_1,e_2)$ in (iii):
$\theta(e_4,e_1,e_2)=\theta(e_1,e_1,e_2)\cdot e_2+e_1\cdot\theta(e_2,e_1,e_2)=0$, so $\theta_{124}=0$.
Hence $\theta=0$.

\item[{\bf ${\rm A}_{13}$:}] $e_1e_2=e_4$, $e_2e_2=e_3$.
By (L1) with $a=e_2$, $\theta_{123}=\theta_{234}=0$.
As for ${\rm A}_{12}$, $\theta(e_4,e_1,e_2)=0$, so $\theta_{124}=0$.
By (L1), $\theta(e_3,e_1,e_4)=2e_2\cdot\theta(e_2,e_1,e_4)=0$, so $\theta_{134}=0$.
Hence $\theta=0$.

\item[{\bf ${\rm A}_{14}$:}] $e_1e_2=e_4$, $e_3e_3=e_4$, $\Ann({\rm A})=\langle e_4\rangle$.
Write $\theta_{123}=c_1e_1+c_2e_2+c_3e_3+c_4e_4$.
Taking $x=e_1,y=e_2$ and $x=y=e_3$ in (iii) we obtain two expressions for $\theta(e_4,z,w)$:
\[\theta(e_4,z,w)=\theta(e_1,z,w)\cdot e_2+e_1\cdot\theta(e_2,z,w)=2e_3\cdot\theta(e_3,z,w).\]
For $(z,w)=(e_1,e_2)$: the first expression is $0$ and the second one is $2e_3\cdot\theta_{123}=2c_3e_4$;
hence $c_3=0$ and $\theta_{124}=0$.
For $(z,w)=(e_1,e_3)$: the first expression is $-e_1\cdot\theta_{123}=-c_2e_4$ and the second one is
$0$; hence $c_2=0$ and $\theta_{134}=0$.
For $(z,w)=(e_2,e_3)$: the first expression is $\theta_{123}\cdot e_2=c_1e_4$ and the second one is $0$;
hence $c_1=0$ and $\theta_{234}=0$.
Thus $\theta_{123}=\alpha e_4$ is the only non-zero value.
Conversely, such a $\theta$ satisfies (iii): the left hand side vanishes because
${\rm A}\cdot{\rm A}=\C e_4$ and $\theta(e_4,-,-)=0$, and the right hand side vanishes because all
values of $\theta$ lie in $\Ann({\rm A})$.

\item[{\bf ${\rm A}_{15}$:}] $e_1e_1=e_2$, $e_1e_2=e_4$, $e_3e_3=e_4$.
By (L1) with $a=e_1$, $\theta_{123}=\theta_{124}=0$.
By (L1) with $a=e_3$, $\theta(e_4,z,w)=2e_3\cdot\theta(e_3,z,w)$, and taking
$(z,w)=(e_1,e_3)$ and $(e_2,e_3)$ we get $\theta_{134}=\theta_{234}=0$.
Hence $\theta=0$.

\item[{\bf ${\rm A}_{16}$:}] $e_1e_1=e_2$, $e_1e_2=e_3$, $e_1e_3=e_4$, $e_2e_2=e_4$.
By (L1) with $a=e_1$, $\theta_{123}=\theta_{124}=0$.
Taking $x=e_1$, $y=e_2$, $(z,w)=(e_1,e_4)$ in (iii) we get $\theta(e_3,e_1,e_4)=0$, so
$\theta_{134}=0$, and then $\theta_{234}=2e_1\cdot\theta_{134}=0$ by (L1).
Hence $\theta=0$.
\end{enumerate}
\end{proof}

\begin{corollary}\label{cor:Z3}
Let ${\rm A}$ be a complex $4$-dimensional commutative associative algebra with non-zero
multiplication.
Then ${\rm Z}^{3}({\rm A},{\rm A})=\{0\}$ unless ${\rm A}$ is isomorphic to one of
${\rm A}_{03}$, ${\rm A}_{04}$, ${\rm A}_{05}$, ${\rm A}_{14}$, ${\rm A}_{22}$, and in these five cases
${\rm Z}^{3}({\rm A},{\rm A})$ consists of the maps described in Lemmas~\ref{lem:idemp}
and~\ref{lem:nilp} (including $\theta=0$).
\end{corollary}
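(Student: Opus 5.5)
The plan is to read the corollary off the two preceding lemmas, together with the definition of ${\rm Z}^{3}({\rm A},{\rm A})$. Recall that every $\theta\in{\rm Z}^{3}({\rm A},{\rm A})$ is skew-symmetric and satisfies the Leibniz rule~(iii). So both lemmas apply to any non-zero element of ${\rm Z}^{3}({\rm A},{\rm A})$, and their content gives exactly what the corollary claims.

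First we split the list of Theorem~\ref{4-dim asscom} in two. By the note after that theorem, ${\rm A}_{04},{\rm A}_{05},{\rm A}_{06},{\rm A}_{12},\dots,{\rm A}_{16}$ are the nilpotent algebras, and every other algebra contains a non-zero idempotent.
\begin{itemize}
\item If ${\rm A}$ contains a non-zero idempotent and $0\neq\theta\in{\rm Z}^{3}({\rm A},{\rm A})$, Lemma~\ref{lem:idemp} forces ${\rm A}\cong{\rm A}_{03}$ or ${\rm A}\cong{\rm A}_{22}$, with $\theta$ of the stated shape.
\item If ${\rm A}$ is nilpotent, Lemma~\ref{lem:nilp} forces ${\rm A}\in\{{\rm A}_{04},{\rm A}_{05},{\rm A}_{14}\}$, again with $\theta$ of the stated shape.
\end{itemize}
Hence for every other ${\rm A}$ the only element of ${\rm Z}^{3}({\rm A},{\rm A})$ is $0$. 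That element does lie in ${\rm Z}^{3}({\rm A},{\rm A})$, as already noted.

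For the five exceptional algebras we need the converse inclusion: every map of the displayed form must actually lie in ${\rm Z}^{3}({\rm A},{\rm A})$.
\begin{itemize}
\item \emph{Skew-symmetry.} This holds by construction, since $\theta$ is defined by one value on a single basis triple, extended skew-symmetrically.
\item \emph{Leibniz rule.} This was checked in the converse parts of both lemmas.
\item \emph{Filippov identity.} This is the one remaining point, and the main step.
\end{itemize}

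For the Filippov identity, write $\theta(x,y,z)=\omega(x,y,z)\,v$, where $\omega$ is a skew form supported on a $3$-dimensional subspace $U$ (spanned by the three basis vectors) and $v$ is the value. Then $\theta$ is the $3$-Lie bracket of an algebra whose bracket is a "volume form times a vector". I plan to verify the identity directly. Its left side is $\omega(x_1,x_2,v)\,\omega(y_1,y_2,y_3)\,v$. Its right side is $\omega(x_1,x_2,\cdot)$ applied through $\omega(y_1,y_2,y_3)$, which reduces to a multiple of $\omega(x_1,x_2,v)\,\omega(y_1,y_2,y_3)\,v$, using that the $y$-slots pick up components along $v$.

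A cleaner route is Filippov's classification. In dimension $4$, every bracket $[x,y,z]=\omega(x,y,z)v$ is a $3$-Lie algebra: it is of the type $[e_2,e_3,e_4]=v$, which appears among Filippov's $(n+1)$-dimensional algebras~\cite{F85} (e.g.\ $[234]=e_2$ and nilpotent ones). Alternatively, one can check it directly. The identity is multilinear, so it suffices to take basis vectors. Every bracket is non-zero only on the triple spanning $U$, and each inner derivation $D=\theta(x_1,x_2,-)$ has rank at most $1$, with image $\C v$. The identity then says $D(\omega(y)v)=\sum\omega(\ldots,Dy_i,\ldots)v$, i.e.\ $\omega(y)\,\omega(x_1,x_2,v)=\operatorname{tr}(D|_U\text{-action})\,\omega(y)$. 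Both sides equal $\omega(y)\,\omega(x_1,x_2,v)$, because $D$ is rank one and $\sum_i\omega(\ldots,Dy_i,\ldots)=\operatorname{tr}(\bar D)\,\omega(y)$, where $\bar D$ is the induced map on $U$. Since $v=\bar v+(\text{component outside }U)$, the trace of $\bar D$ is $\omega(x_1,x_2,\bar v)$, matching the left side.

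Making this trace computation precise, including the component of $v$ outside $U$ and the cases where $x_1,x_2\notin U$, is the only real obstacle. All remaining steps are bookkeeping from the lemmas.
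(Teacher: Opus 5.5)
Your proposal is correct and follows the paper's proof. You reduce (i) and (iii) to Lemmas~\ref{lem:idemp} and~\ref{lem:nilp} and then verify the Filippov identity for $\theta=\omega\,v$; your trace identity $\sum_i\omega(\dots,Dy_i,\dots)=\operatorname{tr}(\bar D)\,\omega(y)$ for the rank-one map $D=\lambda\otimes v$, $\lambda=\omega(x_1,x_2,-)$, is just the dual form of the paper's observation that $(\lambda\wedge\omega)(v,y_1,y_2,y_3)=0$.

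The point you flag as the remaining obstacle is settled by the remark the paper uses. For arbitrary $x_1,x_2$, the form $\lambda$ vanishes on the basis vector $e_l$ spanning $\ker\omega$, that is, $\lambda\in\langle e_i^*,e_j^*,e_k^*\rangle$. Hence $De_l=0$, the trace identity on $U$ holds for all $y_1,y_2,y_3\in{\rm A}$, and $\omega(x_1,x_2,\bar v)=\omega(x_1,x_2,v)$.
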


\begin{proof}
By Lemmas~\ref{lem:idemp} and~\ref{lem:nilp}, the maps $\theta$ satisfying (i) and (iii) are exactly
the ones described there.
It remains to check that they satisfy the Filippov identity~(ii).
In all five cases $\theta(x,y,z)=\omega(x,y,z)\,v$, where $v$ is a vector and
$\omega=e_i^*\wedge e_j^*\wedge e_k^*$ for some $i<j<k$ (here $e_1^*,\dots,e_4^*$ is the dual basis).
Fix $a,b\in{\rm A}$ and put $\lambda(u)=\omega(a,b,u)$; then $\theta(a,b,u)=\lambda(u)v$.
The Filippov identity
\begin{center}
$\theta\big(a,b,\theta(y_1,y_2,y_3)\big)=\theta\big(\theta(a,b,y_1),y_2,y_3\big)+\theta\big(y_1,\theta(a,b,y_2),y_3\big)+
\theta\big(y_1,y_2,\theta(a,b,y_3)\big)$ \end{center}
becomes, after comparing the coefficients of $v$,
\[\lambda(v)\,\omega(y_1,y_2,y_3)-\lambda(y_1)\,\omega(v,y_2,y_3)-\lambda(y_2)\,\omega(y_1,v,y_3)-
\lambda(y_3)\,\omega(y_1,y_2,v)=0,\]
and the left hand side is exactly $(\lambda\wedge\omega)(v,y_1,y_2,y_3)$.
Since $\lambda=\omega(a,b,-)$ is a linear combination of $e_i^*,e_j^*,e_k^*$, we have
$\lambda\wedge\omega=0$, and the Filippov identity holds.
\end{proof}

In particular, the underlying commutative associative algebra of a non-trivial
$4$-dimensional $3$-Poisson algebra is isomorphic to one of
${\rm A}_{03},{\rm A}_{04},{\rm A}_{05},{\rm A}_{14},{\rm A}_{22}$.
We now describe the $\Aut({\rm A})$-orbits on the non-zero elements of
${\rm Z}^{3}({\rm A},{\rm A})$ for these algebras.
We write an automorphism $\phi$ by its matrix $(a_{ij})$, where
$\phi(e_j)=\sum_ia_{ij}e_i$ (the $j$-th column is the image of $e_j$).
Note that applying $\phi$ is the same as passing to the new basis $f_j=\phi(e_j)$: the multiplication
table of ${\rm A}_{\theta*\phi}$ in the basis $e_1,\dots,e_4$ is the multiplication table of
${\rm A}_\theta$ in the basis $f_1,\dots,f_4$.

\subsubsection{\texorpdfstring{$3$-Poisson structures on ${\rm A}_{03}$, ${\rm A}_{22}$, ${\rm A}_{04}$, ${\rm A}_{05}$ and ${\rm A}_{14}$}{3-Poisson structures on A03, A22, A04, A05 and A14}}

\noindent{\bf The algebras ${\rm A}_{03}$ and ${\rm A}_{22}$.}
Let ${\rm A}={\rm A}_{03}$ or ${\rm A}={\rm A}_{22}$ and $N=\langle e_2,e_3,e_4\rangle$.
By Lemma~\ref{lem:idemp}, every non-zero $\theta\in{\rm Z}^{3}({\rm A},{\rm A})$ is given by
\[\theta(e_2,e_3,e_4)=v=\alpha e_2+\beta e_3+\gamma e_4\neq0.\]
Since $N\cdot N=0$ and $e_1$ acts on $N$ by $0$ (for ${\rm A}_{03}$) or by the identity (for
${\rm A}_{22}$), every $g\in\GL(N)$ extended by $\phi(e_1)=e_1$ is an automorphism of ${\rm A}$,
and in fact
\[\Aut({\rm A})=\left\{\phi=\begin{pmatrix}1&0&0&0\\0&a_{22}&a_{23}&a_{24}\\0&a_{32}&a_{33}&a_{34}\\
0&a_{42}&a_{43}&a_{44}\end{pmatrix}:\ \det\phi\neq0\right\}.\]
Let $g$ be the lower right $3\times3$ block of $\phi$.
Then $\theta\big(\phi(e_2),\phi(e_3),\phi(e_4)\big)=\det(g)\,v$, so the only non-zero value of $\theta*\phi$ is
\[(\theta*\phi)(e_2,e_3,e_4)=\det(g)\,g^{-1}(v)=\alpha'e_2+\beta'e_3+\gamma'e_4,\qquad
\begin{pmatrix}\alpha'\\\beta'\\\gamma'\end{pmatrix}=\operatorname{adj}(g)
\begin{pmatrix}\alpha\\\beta\\\gamma\end{pmatrix},\]
where $\operatorname{adj}(g)$ is the adjugate matrix; explicitly,
\begin{align*}
\alpha'&=\alpha(a_{33}a_{44}-a_{34}a_{43})+\beta(a_{24}a_{43}-a_{23}a_{44})+\gamma(a_{23}a_{34}-a_{24}a_{33}),\\
\beta'&=\alpha(a_{34}a_{42}-a_{32}a_{44})+\beta(a_{22}a_{44}-a_{24}a_{42})+\gamma(a_{24}a_{32}-a_{22}a_{34}),\\
\gamma'&=\alpha(a_{32}a_{43}-a_{33}a_{42})+\beta(a_{23}a_{42}-a_{22}a_{43})+\gamma(a_{22}a_{33}-a_{23}a_{32}).
\end{align*}
We show that all non-zero $\theta$ lie in one orbit, namely the orbit of $\theta(e_2,e_3,e_4)=e_2$.
\begin{enumerate}
\item If $\alpha\neq0$, take $\phi(e_2)=v$, $\phi(e_3)=e_3$, $\phi(e_4)=\alpha^{-1}e_4$
(so the second column of $\phi$ is $(0,\alpha,\beta,\gamma)^T$).
Then $\det g=1$ and $g^{-1}(v)=e_2$, so $(\theta*\phi)(e_2,e_3,e_4)=e_2$.
\item If $\alpha=0$ and $\beta\neq0$, the automorphism $\phi(e_3)=e_3-e_2$ (other basis vectors fixed)
gives $\alpha'=\beta\neq0$, and we are in the previous case.
\item If $\alpha=\beta=0$, then $\gamma\neq0$, and the automorphism $\phi(e_4)=e_4-e_2$ gives
$\alpha'=\gamma\neq0$, and again we are in the first case.
\end{enumerate}
Therefore, up to isomorphism, there is exactly one non-trivial $3$-Poisson structure on ${\rm A}_{03}$,
namely $\PP^4_1$, and exactly one on ${\rm A}_{22}$, namely $\PP^4_7$.

\medskip\noindent{\bf The algebra ${\rm A}_{04}$.}
By Lemma~\ref{lem:nilp}, every non-zero $\theta\in{\rm Z}^{3}({\rm A}_{04},{\rm A}_{04})$ is given by
\[\theta(e_1,e_3,e_4)=\alpha e_2+\beta e_3+\gamma e_4\neq0.\]
An automorphism must send $e_1$ to an element $\phi(e_1)\notin\Ann({\rm A}_{04})$, $e_2=e_1e_1$ to
$\phi(e_1)\phi(e_1)$, and $\Ann({\rm A}_{04})=\langle e_2,e_3,e_4\rangle$ onto itself, so
\[\Aut({\rm A}_{04})=\left\{\phi=\begin{pmatrix}a_{11}&0&0&0\\a_{21}&a_{11}^2&a_{23}&a_{24}\\
a_{31}&0&a_{33}&a_{34}\\a_{41}&0&a_{43}&a_{44}\end{pmatrix}:\ \det\phi\neq0\right\}.\]
A direct computation shows that the only non-zero value of $\theta*\phi$ is
\begin{center}$(\theta*\phi)(e_1,e_3,e_4)=\alpha'e_2+\beta'e_3+\gamma'e_4$,\end{center} where
\begin{align*}
\alpha'&=a_{11}^{-1}\big(\alpha(a_{33}a_{44}-a_{34}a_{43})+\beta(a_{24}a_{43}-a_{23}a_{44})+
\gamma(a_{23}a_{34}-a_{24}a_{33})\big),\\
\beta'&=a_{11}(\beta a_{44}-\gamma a_{34}),\qquad
\gamma'=a_{11}(\gamma a_{33}-\beta a_{43}).
\end{align*}
In particular, the condition $(\beta,\gamma)=(0,0)$ is preserved by the action.
\begin{enumerate}
\item Let $(\beta,\gamma)\neq(0,0)$.
Choose $w\in\langle e_3,e_4\rangle$ such that the vectors $\beta e_3+\gamma e_4$ and $w$ have
determinant $1$ in the basis $e_3,e_4$ (for example, $w=\beta^{-1}e_4$ if $\beta\neq0$ and
$w=-\gamma^{-1}e_3$ if $\beta=0$), and consider the automorphism
\[\phi(e_1)=e_1,\quad\phi(e_2)=e_2,\quad\phi(e_3)=v=\alpha e_2+\beta e_3+\gamma e_4,\quad\phi(e_4)=w.\]
Then $\theta\big(\phi(e_1),\phi(e_3),\phi(e_4)\big)=v=\phi(e_3)$, i.e.\ $(\theta*\phi)(e_1,e_3,e_4)=e_3$.
We obtain $\PP^4_2$.
(For $(\beta,\gamma)=(1,0)$ this is the automorphism $\phi(e_3)=e_3+\alpha e_2$.)
\item Let $(\beta,\gamma)=(0,0)$; then $\alpha\neq0$, and the automorphism
$\phi(e_4)=\alpha^{-1}e_4$ (other basis vectors fixed) gives $(\theta*\phi)(e_1,e_3,e_4)=e_2$.
We obtain $\PP^4_3$.
\end{enumerate}
The algebras $\PP^4_2$ and $\PP^4_3$ are not isomorphic, since $(\beta,\gamma)=(0,0)$ is an invariant
of the orbit (equivalently, $[{\rm P},{\rm P},{\rm P}]\subseteq{\rm P}\cdot{\rm P}$ holds in $\PP^4_3$ but
not in $\PP^4_2$).
Therefore, up to isomorphism, there are exactly two non-trivial $3$-Poisson structures on
${\rm A}_{04}$.

\medskip\noindent{\bf The algebra ${\rm A}_{05}$.}
By Lemma~\ref{lem:nilp}, every non-zero $\theta\in{\rm Z}^{3}({\rm A}_{05},{\rm A}_{05})$ is given by
\[\theta(e_1,e_2,e_4)=\alpha e_3+\beta e_4\neq0.\]
The automorphisms of ${\rm A}_{05}$ are of two types: either
\begin{center}$\phi(e_1)=a_{11}e_1+a_{31}e_3+a_{41}e_4$, $\phi(e_2)=a_{22}e_2+a_{32}e_3+a_{42}e_4$,\end{center} or
\begin{center}$\phi(e_1)=a_{21}e_2+a_{31}e_3+a_{41}e_4$, $\phi(e_2)=a_{12}e_1+a_{32}e_3+a_{42}e_4$;\end{center}
in both cases $\phi(e_3)=\phi(e_1)\phi(e_2)$ and $\phi(e_4)=a_{34}e_3+a_{44}e_4$, with
$a_{44}\neq0$ and $a_{11}a_{22}\neq0$ (respectively, $a_{12}a_{21}\neq0$).
Put $\Delta=a_{11}a_{22}$ for the first type and $\Delta=-a_{12}a_{21}$ for the second one.
A direct computation shows that the only non-zero value of $\theta*\phi$ is
\[(\theta*\phi)(e_1,e_2,e_4)=\alpha'e_3+\beta'e_4,\qquad
\alpha'=\pm(a_{44}\alpha-a_{34}\beta),\quad\beta'=\Delta\beta,\]
where the sign is $+$ for the first type and $-$ for the second one.
In particular, the condition $\beta=0$ is preserved by the action.
\begin{enumerate}
\item If $\beta\neq0$, take the automorphism
\begin{center}$\phi(e_1)=e_1$, $\phi(e_2)=\beta^{-1}e_2$, $\phi(e_3)=\beta^{-1}e_3$,
$\phi(e_4)=\alpha e_3+\beta e_4$;\end{center} 
then $(\theta*\phi)(e_1,e_2,e_4)=e_4$, and we obtain $\PP^4_4$.
\item If $\beta=0$, then $\alpha\neq0$, and the automorphism $\phi(e_4)=\alpha^{-1}e_4$ (other basis
vectors fixed) gives $(\theta*\phi)(e_1,e_2,e_4)=e_3$; we obtain $\PP^4_5$.
\end{enumerate}
Since $\beta=0$ is an invariant of the orbit, $\PP^4_4\not\cong\PP^4_5$.
Therefore, up to isomorphism, there are exactly two non-trivial $3$-Poisson structures on
${\rm A}_{05}$.

\medskip\noindent{\bf The algebra ${\rm A}_{14}$.}
By Lemma~\ref{lem:nilp}, every non-zero $\theta\in{\rm Z}^{3}({\rm A}_{14},{\rm A}_{14})$ is given by
\[\theta(e_1,e_2,e_3)=\alpha e_4,\qquad\alpha\neq0.\]
The map $\phi(e_1)=e_1$, $\phi(e_2)=\alpha^{-2}e_2$, $\phi(e_3)=\alpha^{-1}e_3$,
$\phi(e_4)=\alpha^{-2}e_4$ is an automorphism of ${\rm A}_{14}$ (indeed,
$\phi(e_1)\phi(e_2)=\alpha^{-2}e_4=\phi(e_3)\phi(e_3)=\phi(e_4)$), and
\[\theta\big(\phi(e_1),\phi(e_2),\phi(e_3)\big)=\alpha^{-3}\alpha e_4=\alpha^{-2}e_4=\phi(e_4),\]
so $(\theta*\phi)(e_1,e_2,e_3)=e_4$.
Hence, up to isomorphism, there is exactly one non-trivial $3$-Poisson structure on ${\rm A}_{14}$,
namely $\PP^4_6$.

\medskip\noindent{\bf End of the proof of Theorem~A.}
By Corollary~\ref{cor:Z3}, the commutative associative part of ${\rm P}$ is isomorphic to one of
${\rm A}_{03},{\rm A}_{04},{\rm A}_{05},{\rm A}_{14},{\rm A}_{22}$, and by Lemma~\ref{isom} and the
orbit computations above, ${\rm P}$ is isomorphic to one of $\PP^4_1,\dots,\PP^4_7$.
Algebras with non-isomorphic commutative associative parts are not isomorphic, and the two pairs
with the same commutative part, $\PP^4_2,\PP^4_3$ and $\PP^4_4,\PP^4_5$, were distinguished above.
Hence the algebras $\PP^4_1,\dots,\PP^4_7$ are pairwise non-isomorphic.

Finally, $\PP^4_3$, $\PP^4_5$ and $\PP^4_6$ are nilpotent, since for suitable degrees of the basis vectors
every non-zero product or bracket of basis vectors has larger degree than each of its arguments ($e_1,e_3,e_4$ of degree $1$ and $e_2$
of degree $2$ in $\PP^4_3$; $e_1,e_2,e_4$ of degree $1$ and $e_3$ of degree $2$ in $\PP^4_5$;
$e_1,e_2,e_3$ of degree $1$ and $e_4$ of degree $2$ in $\PP^4_6$).
The other algebras are not nilpotent: $\PP^4_1$ and $\PP^4_7$ contain the idempotent $e_1$, and in
$\PP^4_2$ and $\PP^4_4$ the operators $[e_1,-,e_4]$ and $[e_1,e_2,-]$ have the eigenvector $e_3$
(respectively, $e_4$) with eigenvalue $1$.
\hfill$\Box$

\begin{remark}[simple algebras]\label{rem:simple4}
None of the algebras $\PP^4_1,\dots,\PP^4_7$ is simple.
Indeed, ${\rm P}^2$ is an ideal of every $3$-Poisson algebra ${\rm P}$, and
${\rm P}^2=\langle e_1,e_2\rangle$, $\langle e_2,e_3\rangle$, $\langle e_3,e_4\rangle$ is a proper non-zero
ideal for ${\rm P}=\PP^4_1$, $\PP^4_2$, $\PP^4_4$, respectively;
in $\PP^4_7$ the subspace $\C e_2$ is an ideal, since $e_2\cdot\PP^4_7=\C e_2$ and
$[e_2,\PP^4_7,\PP^4_7]=\C e_2$;
and the nilpotent algebras $\PP^4_3$, $\PP^4_5$, $\PP^4_6$ have a non-zero annihilator, which is a
proper ideal.
On the other hand, the $3$-Poisson algebras with zero commutative associative multiplication are the
$3$-Lie algebras, and among the $4$-dimensional ones there is exactly one simple algebra, the
Filippov algebra $\mathfrak s$ of 
{\rm Subsection~\ref{sec:geo4} \cite{F85}}; a $4$-dimensional commutative
associative algebra with zero bracket is not simple, since $\C$ is the only simple
finite-dimensional complex commutative associative algebra.
\end{remark}

\subsection{Complex five-dimensional nilpotent \texorpdfstring{$3$}{3}-Poisson algebras}\label{sec:alg5}

\begin{theoremB}
Let ${\rm P}$ be a complex $5$-dimensional nilpotent $3$-Poisson algebra with non-zero commutative
associative multiplication and non-zero $3$-Lie multiplication.
Then ${\rm P}$ is isomorphic to one of the following algebras:

{\small
\begin{longtable}{l|l|l}
\hline
& commutative associative multiplication & $3$-Lie multiplication\\
\hline
\endfirsthead
\hline
& commutative associative multiplication & $3$-Lie multiplication\\
\hline
\endhead
${\mathcal P}_{01}$ & $e_{1} e_{1} = e_{2}$ & $[134] = e_{2}$ \\
\hline${\mathcal P}_{02}$ & $e_{1} e_{2} = e_{3}$ & $[124] = e_{3}$ \\
\hline${\mathcal P}_{03}$ & $e_{1} e_{2} = e_{4},\ e_{3} e_{3} = e_{4}$ & $[123] = e_{4}$ \\
\hline${\mathcal P}_{04}$ & $e_{4} e_{4} = e_{5}$ & $[123] = e_{5}$ \\
\hline${\mathcal P}_{05}$ & $e_{1} e_{1} = e_{5},\ e_{4} e_{4} = e_{5}$ & $[123] = e_{5}$ \\
\hline${\mathcal P}_{06}$ & $e_{1} e_{1} = e_{5},\ e_{2} e_{2} = e_{5},\ e_{4} e_{4} = e_{5}$ & $[123] = e_{5}$ \\
\hline${\mathcal P}_{07}$ & $e_{1} e_{1} = e_{5},\ e_{2} e_{2} = e_{5},\ e_{3} e_{3} = e_{5},\ e_{4} e_{4} = e_{5}$ & $[123] = e_{5}$ \\
\hline${\mathcal P}_{08}$ & $e_{1} e_{4} = e_{5}$ & $[123] = e_{5}$ \\
\hline${\mathcal P}_{09}$ & $e_{1} e_{4} = e_{5},\ e_{2} e_{2} = e_{5}$ & $[123] = e_{5}$ \\
\hline$\mathcal P_{10}$ & $e_{1} e_{4} = e_{5},\ e_{2} e_{2} = e_{5},\ e_{3} e_{3} = e_{5}$ & $[123] = e_{5}$ \\
\hline$\mathcal P_{11}$ & $e_{1} e_{1} = e_{2},\ e_{1} e_{2} = e_{5}$ & $[134] = e_{5}$ \\
\hline$\mathcal P_{12}$ & $e_{1} e_{1} = e_{2},\ e_{1} e_{2} = e_{5},\ e_{3} e_{3} = e_{5}$ & $[134] = e_{5}$ \\
\hline$\mathcal P_{13}$ & $e_{1} e_{1} = e_{2},\ e_{1} e_{2} = e_{5},\ e_{3} e_{3} = e_{5},\ e_{4} e_{4} = e_{5}$ & $[134] = e_{5}$ \\
\hline$\mathcal P_{14}$ & $e_{1} e_{1} = e_{5}$ & $[123] = e_{4},\ [124] = e_{5}$ \\
\hline$\mathcal P_{15}$ & $e_{3} e_{3} = e_{5}$ & $[123] = e_{4},\ [124] = e_{5}$ \\
\hline$\mathcal P_{16}$ & $e_{1} e_{2} = e_{5}$ & $[123] = e_{4},\ [124] = e_{5}$ \\
\hline$\mathcal P_{17}$ & $e_{2} e_{2} = e_{5},\ e_{3} e_{3} = e_{5}$ & $[123] = e_{4},\ [124] = e_{5}$ \\
\hline$\mathcal P_{18}$ & $e_{1} e_{3} = e_{5}$ & $[123] = e_{4},\ [124] = e_{5}$ \\
\hline$\mathcal P_{19}$ & $e_{1} e_{2} = e_{5},\ e_{3} e_{3} = e_{5}$ & $[123] = e_{4},\ [124] = e_{5}$ \\
\hline$\mathcal P_{20}$ & $e_{1} e_{3} = e_{5},\ e_{2} e_{2} = e_{5}$ & $[123] = e_{4},\ [124] = e_{5}$ \\
\hline$\mathcal P_{21}$ & $e_{1} e_{1} = e_{5}$ & $[123] = e_{4}$ \\
\hline$\mathcal P_{22}$ & $e_{1} e_{1} = e_{5},\ e_{2} e_{2} = e_{5}$ & $[123] = e_{4}$ \\
\hline$\mathcal P_{23}$ & $e_{1} e_{1} = e_{5},\ e_{2} e_{2} = e_{5},\ e_{3} e_{3} = e_{5}$ & $[123] = e_{4}$ \\
\hline$\mathcal P_{24}$ & $e_{1} e_{2} = e_{4},\ e_{1} e_{3} = e_{5}$ & $[123] = e_{4}$ \\
\hline$\mathcal P_{25}$ & $e_{1} e_{1} = e_{5},\ e_{2} e_{2} = e_{4}$ & $[123] = e_{4}$ \\
\hline$\mathcal P_{26}$ & $e_{1} e_{1} = e_{4} + e_{5},\ e_{2} e_{2} = e_{5}$ & $[123] = e_{4}$ \\
\hline$\mathcal P_{27}$ & $e_{1} e_{1} = e_{5},\ e_{1} e_{2} = e_{4}$ & $[123] = e_{4}$ \\
\hline$\mathcal P_{28}$ & $e_{1} e_{1} = e_{4},\ e_{1} e_{2} = e_{5}$ & $[123] = e_{4}$ \\
\hline$\mathcal P_{29}$ & $e_{1} e_{1} = e_{4},\ e_{2} e_{2} = e_{4},\ e_{3} e_{3} = e_{5}$ & $[123] = e_{4}$ \\
\hline$\mathcal P_{30}$ & $e_{1} e_{1} = e_{5},\ e_{2} e_{2} = e_{5},\ e_{3} e_{3} = e_{4}$ & $[123] = e_{4}$ \\
\hline$\mathcal P_{31}$ & $e_{1} e_{2} = e_{4},\ e_{2} e_{2} = e_{5},\ e_{3} e_{3} = e_{5}$ & $[123] = e_{4}$ \\
\hline$\mathcal P_{32}$ & $e_{1} e_{2} = e_{5},\ e_{2} e_{2} = e_{4},\ e_{3} e_{3} = e_{4}$ & $[123] = e_{4}$ \\
\hline$\mathcal P_{33}$ & $e_{1} e_{2} = e_{4} + e_{5},\ e_{2} e_{2} = e_{5},\ e_{3} e_{3} = e_{5}$ & $[123] = e_{4}$ \\
\hline$\mathcal P_{34}$ & $e_{1} e_{3} = e_{4},\ e_{2} e_{2} = e_{4},\ e_{2} e_{3} = e_{5}$ & $[123] = e_{4}$ \\
\hline$\mathcal P_{35}$ & $e_{1} e_{3} = e_{5},\ e_{2} e_{2} = e_{5},\ e_{2} e_{3} = e_{4}$ & $[123] = e_{4}$ \\
\hline$\mathcal P_{36}$ & $e_{1} e_{1} = e_{5},\ e_{1} e_{2} = e_{4},\ e_{3} e_{3} = e_{4}$ & $[123] = e_{4}$ \\
\hline$\mathcal P_{37}$ & $e_{1} e_{1} = e_{4},\ e_{1} e_{2} = e_{5},\ e_{3} e_{3} = e_{5}$ & $[123] = e_{4}$ \\
\hline$\mathcal P_{38}$ & $e_{1} e_{1} = e_{5},\ e_{2} e_{2} = e_{4} + e_{5},\ e_{3} e_{3} = e_{4}$ & $[123] = e_{4}$ \\
\hline$\mathcal P_{39}^{\alpha}$ & $e_{1} e_{1} = e_{4} + e_{5},\ e_{2} e_{2} = \alpha e_{4} + e_{5},\ e_{3} e_{3} = e_{5}$ & $[123] = e_{4}$ \\
\hline
\end{longtable}
}

\noindent All algebras listed above are pairwise non-isomorphic, except that
$\PP_{39}^{\alpha}\cong\PP_{39}^{\beta}$ if and only if
$\beta\in\left\{\alpha,\ 1-\alpha,\ \frac1\alpha,\ \frac1{1-\alpha},\ \frac{\alpha}{\alpha-1},\ \frac{\alpha-1}{\alpha}\right\}$
(for $\alpha\in\{0,1\}$ this set is understood as $\{0,1\}$).
\end{theoremB}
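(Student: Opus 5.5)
We will classify the $5$-dimensional nilpotent $3$-Poisson algebras by the method of annihilator (central) extensions mentioned in the introduction, in the form adapted to two multiplications. For a nilpotent $3$-Poisson algebra ${\rm N}$ and a vector space $\mathbb V$, a cocycle is a pair $\Theta=(\eta,\varphi)$ of a symmetric bilinear map $\eta:{\rm N}\times{\rm N}\to\mathbb V$ and a skew-symmetric trilinear map $\varphi:{\rm N}^{\times3}\to\mathbb V$ satisfying the cocycle conditions of associativity for $\eta$, of the Filippov identity for $\varphi$, and the linearized Leibniz rule
\[\varphi(x\cdot y,z,w)=\eta([x,z,w],y)+\eta(x,[y,z,w]).\]
Coboundaries are pairs $(f(x\cdot y),f([x,y,z]))$ for linear $f:{\rm N}\to\mathbb V$. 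Every nilpotent ${\rm P}$ with no annihilator component is ${\rm N}_\Theta={\rm N}\oplus\mathbb V$, where ${\rm N}={\rm P}/\Ann({\rm P})$, $\mathbb V=\Ann({\rm P})$, and $\Ann(\Theta)\cap\Ann({\rm N})=0$. Isomorphism classes correspond to $\Aut({\rm N})$-orbits on $\dim\mathbb V$-dimensional subspaces of ${\rm H}^2({\rm N})$ satisfying this annihilator condition, by the standard Skjelbred--Sund lemma. We will assume this lemma in its usual form, since its proof carries over verbatim.

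\textbf{Reduction.} First, algebras with an annihilator component are $\PP^4_{3},\PP^4_5,\PP^4_6$ plus $\C e_5$, giving $\PP_{01},\PP_{02},\PP_{03}$. For the rest, ${\rm N}$ has dimension $\le4$ and $\dim\mathbb V=5-\dim{\rm N}$. The candidate quotients ${\rm N}$ are the nilpotent algebras of dimension $\le4$ that are commutative associative, $3$-Lie, or $3$-Poisson. The $3$-Poisson ones are $\PP^4_3,\PP^4_5,\PP^4_6$ (Theorem A), together with $3$-dimensional ones. The nilpotent $3$-Lie algebras of dimension $\le4$ come from Filippov's list \cite{F85}, and the nilpotent commutative associative ones of dimension $\le4$ are those in Theorem~\ref{4-dim asscom}. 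We require both multiplications of ${\rm P}$ to be non-zero. Quotients with both products zero (abelian ${\rm N}$) are organized by $\dim\mathbb V$: for ${\rm N}=\C^4$ with $\dim\mathbb V=1$ we get a form $\eta$ plus a trivector $\varphi$, and here Leibniz is automatic. This abelian case produces the big list $\PP_{04}$--$\PP_{10}$ and $\PP_{21}$--$\PP_{39}^\alpha$ once we account for ${\rm N}$ being the $3$-Lie algebra $[123]=e_4$ with a $2$-dimensional annihilator extension, and similarly for the others. So for each ${\rm N}$ we will compute ${\rm Z}^2$, ${\rm B}^2$, ${\rm H}^2$ and $\Aut({\rm N})$, and then the orbits.

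\textbf{Orbit computations.} For ${\rm N}=\C^4$ with $\dim\mathbb V=1$, $\Aut=\GL_4$ acts on pairs (quadratic form $q$, $3$-vector $\varphi$), with $\varphi\ne0$ and $q\ne0$. We normalize $\varphi=e_1^*\wedge e_2^*\wedge e_3^*$; the stabilizer is then the parabolic subgroup fixing the line $\ker\varphi$-dual structure, acting on $q$ up to scalars, with $\Ann$ forcing $q(e_4,\cdot)\ne0$. We classify $q$ by its rank and by the position of $e_4$ relative to the radical, which gives $\PP_{04}$--$\PP_{10}$. For ${\rm N}=\PP^4_3$-type quotients one obtains $\PP_{11}$--$\PP_{13}$, and for the $3$-Lie quotient $[123]=e_4,\ [124]=e_5$ (the $4$-dimensional $3$-Lie algebra with a $1$-dimensional extension) one obtains $\PP_{14}$--$\PP_{20}$, where the Leibniz rule forces $\eta$ to vanish on $e_4$. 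The remaining case, ${\rm N}$ the $3$-dimensional abelian algebra with $\dim\mathbb V=2$ where $[123]$ spans one direction, amounts to classifying pairs of quadratic forms on $\C^3$, i.e.\ pencils, modulo the action respecting the trivector. Generic pencils of conics are classified by four base points on $\mathbb P^1$-type data, which is where the cross-ratio parameter $\alpha$ with its $S_3$-orbit $\{\alpha,1-\alpha,1/\alpha,\dots\}$ appears in $\PP_{39}^\alpha$.

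\textbf{Main obstacle.} The main difficulty is this last case: classifying pencils of ternary quadratic forms $\langle q_4,q_5\rangle$ in which the distinguished direction $e_4$ (carrying the bracket) is a marked point of the pencil. This is a Segre-symbol classification with one marked member. We would first diagonalize a generic pencil simultaneously, yielding $e_1e_1=e_4+e_5,\ e_2e_2=\alpha e_4+e_5,\ e_3e_3=e_5$, with the marked member $q_4$ and the degenerate members at $0,1,\alpha,\infty$-type positions. The residual symmetry permuting them gives the stated $S_3$ action. We would then treat each degenerate Segre symbol separately, producing $\PP_{21}$--$\PP_{38}$. Non-isomorphism would be checked with invariants: the ranks of $P\cdot P$, $[P,P,P]\subseteq P\cdot P$, and the rank of the restriction of the product.
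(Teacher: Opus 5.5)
Your overall route (Skjelbred--Sund extensions with quotients $\C^4$, the commutative algebra $e_1e_1=e_2$, $\mathfrak n_1$, and a two-dimensional extension of $\C^3$ leading to pencils of ternary forms with a marked member) is the paper's. However, the two steps that carry the content are missing or wrong as written.

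\textbf{Completeness of the reduction.} You never carry out the reduction that establishes completeness.
\begin{enumerate}[(1)]
\item $\dim\Ann({\rm P})\le2$, because the bracket factors through ${\rm P}/\Ann({\rm P})$.
\item If $\dim\Ann({\rm P})=2$, the quotient must be $\C^3$. A $3$-dimensional nilpotent algebra has zero bracket, and for the other three nilpotent commutative algebras of dimension $3$ the Leibniz cocycle condition gives $\vartheta({\rm N}\cdot{\rm N},{\rm N},{\rm N})=0$, which kills $\Delta_{123}$.
\item If $\dim\Ann({\rm P})=1$, you must show two things. First, $\PP^4_3,\PP^4_5,\PP^4_6$ and the commutative algebras $e_1e_2=e_3$ and $e_1e_2=e_4,\ e_3e_3=e_4$ have no cocycle with $\Rad(\theta,\vartheta)\cap\Ann({\rm N})=0$. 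Second, ${\rm A}_{06},{\rm A}_{12},{\rm A}_{13},{\rm A}_{15},{\rm A}_{16}$ have no cocycle with non-zero trilinear part.
\end{enumerate}
Several of your attributions are also wrong. $\PP_{11}$--$\PP_{13}$ are not extensions of $\PP^4_3$; there $e_2\in\Ann$ lies in the radical of every cocycle. They extend the commutative algebra $e_1e_1=e_2$ with zero bracket, since $[134]=e_5$ lies in the annihilator. Likewise $\PP_{14}$--$\PP_{20}$ extend $\mathfrak n_1:[123]=e_4$, and $\PP_{21}$--$\PP^{\alpha}_{39}$ extend $\C^3$, not $\mathfrak n_1$. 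In Case~1 the second invariant is whether the canonical line $\C e_4=\ker\varphi$ is $q$-isotropic. It cannot be its position relative to $\Rad q$, because the annihilator condition already forces $e_4\notin\Rad q$; using only that, you would get four algebras instead of seven. In Case~3 you state the result without the orbit argument, which needs the kernel line $\ell$ of $\omega=\vartheta(-,-,e_4)$ and the position of $\ell$ relative to $\theta$.

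\textbf{The pencil step.} Here the marked member is misidentified. Under the admissible changes $e_4\mapsto\lambda e_4$, $e_5\mapsto\mu e_5+\nu e_4$ (the line $\C e_4=[{\rm P},{\rm P},{\rm P}]$ is intrinsic), $q_5$ changes only by a scalar, while $q_4$ changes modulo $q_5$. So the intrinsic datum is the span $\langle q_4,q_5\rangle$ with the marked member $[q_5]$, the forms vanishing on $[{\rm P},{\rm P},{\rm P}]$; $q_4$ alone is not even well defined. In your normal form $q_4=x_1^2+\alpha x_2^2$ is one of the three degenerate members, and marking it gives the single algebra $\PP_{38}$, not a modulus. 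Nor does $\alpha$ come from ``four base points'': pencils of conics with four base points in general position form one $\GL_3(\C)$-orbit. The parameter $\alpha$ is the cross-ratio of the non-degenerate marked member $q_5$ with the three roots of $\det(sq_4+tq_5)$. The isomorphism criterion for $\PP^{\alpha}_{39}$ then needs that the stabilizer of the pencil induces every permutation of these roots.

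Segre symbols also miss two strata that must be treated separately:
\begin{enumerate}[(1)]
\item $q_4\in\C q_5$, giving $\PP_{21}$--$\PP_{23}$ according to the rank of $q_5$;
\item the three singular pencils, giving $\PP_{24}$--$\PP_{28}$.
\end{enumerate}
For each regular type you must check that the stabilizer is transitive on members of equal rank and root multiplicity. This is why every marked type other than a non-degenerate member of a $[111]$ pencil gives exactly one algebra.

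Finally, your non-isomorphism invariants are too coarse. For example, $\PP_{31}$ and $\PP_{32}$ both have a pencil of type $[21]$ with a rank-$2$ marked member, $\dim{\rm P}\cdot{\rm P}=2$ and $[{\rm P},{\rm P},{\rm P}]\subseteq{\rm P}\cdot{\rm P}$. They differ only in whether the marked member is the double or the simple root of the determinant. You need the pencil type, the marked member with its root multiplicity, and $j(\alpha)$ for the family.
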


\begin{remark}
The complex $5$-dimensional nilpotent $3$-Poisson algebras with zero $3$-Lie multiplication are the
$25$ nilpotent commutative associative algebras $\AAA_1,\dots,\AAA_{25}$
(see~{\rm \cite{Maz80,Poo08}}), and the ones with zero commutative
multiplication are the nilpotent $3$-Lie algebras 
\begin{center}$\C^5;$ \qquad
$\mathfrak n_1\oplus\C:[123]=e_4;$\qquad  and \qquad 
$\mathfrak n_2:[123]=e_4,\ [124]=e_5$ 
(see~{\rm \cite{BSZ11}})$.$ \end{center}
Together with Theorem~B this gives the complete algebraic classification of complex
$5$-dimensional nilpotent $3$-Poisson algebras.
\end{remark}

\subsubsection{The method of annihilator extensions}\label{sec:method}

We use the method of annihilator (central) extensions, which goes back to Skjelbred and
Sund~\cite{SS78}; its adaptation to algebras with two multiplications is described in detail
in~\cite{ABCF23}, and we only recall the main notions.

Let ${\rm P}$ be a $3$-Poisson algebra and $\mathbb V$ a vector space.
Let $\theta:{\rm P}\times{\rm P}\to\mathbb V$ be a symmetric bilinear map and
$\vartheta:{\rm P}\times{\rm P}\times{\rm P}\to\mathbb V$ a skew-symmetric trilinear map.
The pair $(\theta,\vartheta)$ is a \emph{cocycle} if, for all $a,b,c,d,e\in{\rm P}$,
\begin{enumerate}
\item $\theta(a\cdot b,c)=\theta(a,b\cdot c)$;
\item $\vartheta\big(a,b,[c,d,e]\big)=\vartheta\big([a,b,c],d,e\big)+\vartheta\big(c,[a,b,d],e\big)+\vartheta\big(c,d,[a,b,e]\big)$;
\item $\vartheta(a\cdot b,c,d)=\theta\big([a,c,d],b\big)+\theta\big(a,[b,c,d]\big)$.
\end{enumerate}
The cocycles form a vector space ${\rm Z}^{2,3}({\rm P},\mathbb V)$.
On ${\rm P}_{(\theta,\vartheta)}={\rm P}\oplus\mathbb V$ define
\begin{equation*}
(x+u)\cdot(y+v)=x\cdot y+\theta(x,y),\qquad [x+u,y+v,z+w]=[x,y,z]+\vartheta(x,y,z).
\end{equation*}
Then ${\rm P}_{(\theta,\vartheta)}$ is a $3$-Poisson algebra if and only if
$(\theta,\vartheta)\in {\rm Z}^{2,3}({\rm P},\mathbb V)$; it is called an
\emph{$s$-dimensional annihilator extension} of ${\rm P}$, where $s=\dim\mathbb V$.
For a linear map $f:{\rm P}\to\mathbb V$ put $\delta f=\Big(f(x\cdot y),f\big([x,y,z]\big)\Big)$; these
\emph{coboundaries} form a subspace ${\rm B}^{2,3}({\rm P},\mathbb V)$ of ${\rm Z}^{2,3}({\rm P},\mathbb V)$,
and ${\rm H}^{2,3}({\rm P},\mathbb V)={\rm Z}^{2,3}({\rm P},\mathbb V)/{\rm B}^{2,3}({\rm P},\mathbb V)$.
The \emph{radical} of a cocycle is
\begin{center}
$\Rad(\theta,\vartheta)=\big\{x\in{\rm P}:\ \theta(x,{\rm P})=0,\ \vartheta(x,{\rm P},{\rm P})=0\big\}$;\end{center}  then
$\Ann({\rm P}_{(\theta,\vartheta)})=\big(\Rad(\theta,\vartheta)\cap\Ann({\rm P})\big)\oplus\mathbb V$.

Every $n$-dimensional $3$-Poisson algebra $\mathfrak P$ with $\dim\Ann(\mathfrak P)=s\neq0$ is of the
form ${\rm P}_{(\theta,\vartheta)}$, where ${\rm P}=\mathfrak P/\Ann(\mathfrak P)$ and
$(\theta,\vartheta)\in {\rm Z}^{2,3}\big({\rm P},\Ann(\mathfrak P)\big)$ satisfies
$\Rad(\theta,\vartheta)\cap\Ann({\rm P})=0$.
Fix a basis $v_1,\dots,v_s$ of $\mathbb V$ and write
$(\theta,\vartheta)=\sum_{i=1}^s(\theta_i,\vartheta_i)v_i$ with
$(\theta_i,\vartheta_i)\in {\rm Z}^{2,3}({\rm P},\C)$.
The group $\Aut({\rm P})$ acts on ${\rm Z}^{2,3}({\rm P},\C)$ by
$\phi(\theta,\vartheta)=\big(\theta(\phi-,\phi-),\vartheta(\phi-,\phi-,\phi-)\big)$, preserving
${\rm B}^{2,3}({\rm P},\C)$.
Let $\T_s({\rm P})$ be the set of $s$-dimensional subspaces
$\mathbb W=\langle\big[(\theta_1,\vartheta_1)\big],\dots,\big[(\theta_s,\vartheta_s)\big]\rangle$ of
${\rm H}^{2,3}({\rm P},\C)$ such that $\bigcap_{i=1}^s\Rad(\theta_i,\vartheta_i)\cap\Ann({\rm P})=0$.
As in \cite{ABCF23} one proves the following statement.

\begin{theorem}
The $\Aut({\rm P})$-orbits on $\T_s({\rm P})$ are in one-to-one correspondence with the isomorphism
classes of $3$-Poisson algebras without annihilator components which are $s$-dimensional
annihilator extensions of ${\rm P}$ and have $s$-dimensional annihilator.
\end{theorem}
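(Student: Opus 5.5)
We prove the theorem by the standard Skjelbred--Sund argument, transported to pairs of multiplications as in~\cite{ABCF23}. The proof has two directions: building an algebra from an orbit, and recovering the orbit from an algebra.

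\textbf{From algebras to subspaces.} Let $\mathfrak P$ be an $n$-dimensional $3$-Poisson algebra with $\dim\Ann(\mathfrak P)=s$ and no annihilator components. Choose a linear section $\sigma:{\rm P}={\rm P}'/\Ann\to\mathfrak P$ and put
\[
\theta(x,y)=\sigma(x)\cdot\sigma(y)-\sigma(x\cdot y),\qquad \vartheta(x,y,z)=[\sigma x,\sigma y,\sigma z]-\sigma[x,y,z],
\]
both with values in $\Ann(\mathfrak P)$. Commutativity, associativity, the Filippov identity and the Leibniz rule in $\mathfrak P$ translate directly into cocycle conditions 1--3. In each check, the terms in which an annihilator element enters a product or bracket vanish.

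A different section changes $(\theta,\vartheta)$ by $\delta f$. After choosing a basis of $\Ann(\mathfrak P)$, we obtain a subspace $\mathbb W=\langle[(\theta_i,\vartheta_i)]\rangle\subseteq{\rm H}^{2,3}({\rm P},\C)$. Its radical condition holds because $\Ann(\mathfrak P)$ is the whole annihilator, via the formula $\Ann({\rm P}_{(\theta,\vartheta)})=(\Rad\cap\Ann{\rm P})\oplus\mathbb V$.

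We have $\dim\mathbb W=s$. If the classes were linearly dependent, some nonzero combination $\sum\lambda_i(\theta_i,\vartheta_i)$ would equal $\delta f$. After modifying the section we may take it to be $0$. Then the corresponding central direction splits off as an annihilator component, since $\mathfrak P=\ker(\text{that coordinate})\oplus\C v$ is a direct sum of ideals. This is excluded by hypothesis.

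\textbf{From subspaces to algebras.} Conversely, take $\mathbb W\in\T_s({\rm P})$ and any representatives. The algebra ${\rm P}_{(\theta,\vartheta)}$ is $3$-Poisson because the representatives are cocycles. By the radical formula its annihilator is exactly $\mathbb V$, and it has no annihilator component. Indeed, an annihilator component $\C x$ would have $x\in\mathbb V$ after projecting, and this would produce a linear dependence among the classes modulo coboundaries.

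Changing the representatives by coboundaries, or changing the basis of $\mathbb W$, gives isomorphic algebras via $x+u\mapsto x+f(x)+u$ and a linear change on $\mathbb V$.

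\textbf{Matching isomorphism classes with orbits.} This is the main step. Suppose $\Phi:{\rm P}_{(\theta,\vartheta)}\to{\rm P}_{(\theta',\vartheta')}$ is an isomorphism. It maps annihilator onto annihilator, that is, $\mathbb V$ onto $\mathbb V$. It therefore induces $\phi\in\Aut({\rm P})$ and $\psi\in\GL(\mathbb V)$. Writing $\Phi(x)=\phi(x)+f(x)$ with $f:{\rm P}\to\mathbb V$ and comparing both products gives
\[
\psi\circ(\theta,\vartheta)=\phi(\theta',\vartheta')+\delta f'.
\]
Hence $\phi$ maps $\mathbb W'$ onto $\mathbb W$.

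Conversely, given $\phi$ with $\phi\mathbb W'=\mathbb W$, choose $\psi$ expressing the transformed basis in the old one, and coboundary corrections $f$. The same formulas then define an isomorphism $\Phi$.

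The only delicate point is bookkeeping: the action must be a genuine group action on classes. This holds because $\Aut({\rm P})$ preserves ${\rm B}^{2,3}$, since $\phi(\delta f)=\delta(f\circ\phi)$, and because the radical condition is $\Aut({\rm P})$-invariant, since $\phi$ preserves $\Ann({\rm P})$.
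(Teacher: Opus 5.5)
Your proposal is correct and is the standard Skjelbred--Sund argument, which the paper does not write out but cites from \cite{ABCF23}; the cocycle identities, the splitting-off of an annihilator component from a linear dependence among the classes, and the orbit matching via $\Phi(x+u)=\phi(x)+f(x)+\psi(u)$ all proceed as in that reference. One point you use implicitly when you take ``any representatives'' is that the condition $\Rad\cap\Ann({\rm P})=0$ does not depend on the chosen representatives or on the basis of $\mathbb W$. This holds because $\delta f$ vanishes whenever one argument lies in $\Ann({\rm P})$, and because the intersection of the radicals over a basis equals the intersection over the whole span.
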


Thus, to classify the $n$-dimensional $3$-Poisson algebras with $s$-dimensional annihilator and
without annihilator components, for every $(n-s)$-dimensional $3$-Poisson algebra ${\rm P}$ we
compute ${\rm H}^{2,3}({\rm P},\C)$ and $\Aut({\rm P})$, describe the $\Aut({\rm P})$-orbits on
$\T_s({\rm P})$, and write the algebra corresponding to a representative of every orbit.

Let $e_1,\dots,e_n$ be a basis of ${\rm P}$.
We denote by $\nabla_{ij}$ ($i\leq j$) the symmetric bilinear form with
$\nabla_{ij}(e_l,e_m)=1$ if $\{l,m\}=\{i,j\}$ and $0$ otherwise, and by $\Delta_{ijk}$ ($i<j<k$) the
skew-symmetric trilinear form with $\Delta_{ijk}(e_i,e_j,e_k)=1$ which vanishes on all other
triples of basis vectors with increasing indices.

\subsubsection{Cohomology of low-dimensional nilpotent \texorpdfstring{$3$}{3}-Poisson algebras}\label{sec:coh}

The nilpotent $3$-Poisson algebras of dimension at most $3$ are commutative associative.
Indeed, the only non-zero skew-symmetric trilinear maps on a $3$-dimensional space ${\rm P}$ are
$[x,y,z]=\omega(x,y,z)v$ with $\omega\neq0$, $v\neq0$, and then $[v,{\rm P},{\rm P}]=\C v$, so
${\rm P}^{(m)}\supseteq\C v$ for all $m$.
The complex nilpotent commutative associative algebras of dimension $3$ are
\begin{center}
$\C^3;$ 
\qquad $e_1e_1=e_2;$ 
\qquad $e_1e_2=e_3;$ 
\qquad and \qquad  $e_1e_1=e_2,\ e_1e_2=e_3;$
\end{center}
and, for a $3$-dimensional nilpotent commutative associative algebra ${\rm P}$, the third cocycle
condition with $[{\rm P},{\rm P},{\rm P}]=0$ gives $\vartheta({\rm P}\cdot{\rm P},{\rm P},{\rm P})=0$.
Hence $(0,\Delta_{123})\in {\rm Z}^{2,3}({\rm P},\C)$ only for ${\rm P}=\C^3$, and
${\rm Z}^{2,3}(\C^3,\C)={\rm H}^{2,3}(\C^3,\C)=\langle\nabla_{ij},\Delta_{123}\rangle$.

The complex $4$-dimensional nilpotent $3$-Poisson algebras are the nilpotent commutative
associative algebras (see~\cite{BG13,Poo08}), the $3$-Lie algebra
$\mathfrak n_1:[123]=e_4$ (see~\cite{BSZ11}), and the algebras $\PP^4_3$, $\PP^4_5$,
$\PP^4_6$ of Theorem~A.
In Table below  we give the cocycles of those of them for which ${\rm Z}^{2,3}({\rm P},\C)$
contains a cocycle with non-zero trilinear part.
For the remaining $4$-dimensional nilpotent commutative associative algebras
\begin{longtable}{lllll}
${\rm A}_{06}:$&$e_1e_1=e_2$&$e_1e_2=e_3$ \\ 
${\rm A}_{12}:$&$e_1e_1=-e_3$&$e_1e_2=e_4$&$e_2e_2=e_3$ \\
${\rm A}_{13}:$&$e_1e_2=e_4$&$e_2e_2=e_3$ \\ 
${\rm A}_{15}:$&$e_1e_1=e_2$&$e_1e_2=e_4$&$e_3e_3=e_4$ \\
${\rm A}_{16}:$&$e_1e_1=e_2$&$e_1e_2=e_3$&$e_1e_3=e_4$&$e_2e_2=e_4$
\end{longtable} 
\noindent every cocycle has zero trilinear part.
For brevity we write $\nabla_{ij}$ and $\Delta_{ijk}$ instead of $(\nabla_{ij},0)$ and
$(0,\Delta_{ijk})$.

\begin{longtable}{l|l|l|l}
\hline
${\rm P}$ & ${\rm Z}^{2,3}({\rm P},\C)$ & ${\rm B}^{2,3}({\rm P},\C)$ & $\Ann({\rm P})$\\
\hline
\endfirsthead
\hline
${\rm P}$ & ${\rm Z}^{2,3}({\rm P},\C)$ & ${\rm B}^{2,3}({\rm P},\C)$ & $\Ann({\rm P})$\\
\hline
\endhead
$\C^4$ & $\nabla_{ij}\ (1\leq i\leq j\leq 4),\ \Delta_{123},\Delta_{124},\Delta_{134},\Delta_{234}$ & $0$ & $\C^4$\\
\hline
$e_1e_1=e_2$ & $\nabla_{11},\nabla_{12},\nabla_{13},\nabla_{14},\nabla_{33},\nabla_{34},\nabla_{44},\Delta_{134}$ & $\nabla_{11}$ & $\langle e_2,e_3,e_4\rangle$\\
\hline
$e_1e_2=e_3$ & $\nabla_{11},\nabla_{12},\nabla_{14},\nabla_{22},\nabla_{24},\nabla_{44},\Delta_{124}$ & $\nabla_{12}$ & $\langle e_3,e_4\rangle$\\
\hline
$e_1e_2=e_4$& $\nabla_{11},\nabla_{12},\nabla_{13},\nabla_{22},\nabla_{23},\nabla_{33},\Delta_{123}$ & $\nabla_{12}+\nabla_{33}$ & $\langle e_4\rangle$\\
$e_3e_3=e_4$ && \\ 
\hline
$\mathfrak n_1$ & $\nabla_{ij}\ (1\leq i\leq j\leq 3),\ \Delta_{123},\Delta_{124},\Delta_{134},\Delta_{234}$ & $\Delta_{123}$ & $\langle e_4\rangle$\\
\hline
$\PP^4_3$ & $\nabla_{11},\nabla_{13},\nabla_{14},\nabla_{33},\nabla_{34},\nabla_{44},\Delta_{134}$ & $\nabla_{11}+\Delta_{134}$ & $\langle e_2\rangle$\\
\hline
$\PP^4_5$ & $\nabla_{11},\nabla_{12},\nabla_{14},\nabla_{22},\nabla_{24},\nabla_{44},\Delta_{124}$ & $\nabla_{12}+\Delta_{124}$ & $\langle e_3\rangle$\\
\hline
$\PP^4_6$ & $\nabla_{11},\nabla_{12},\nabla_{13},\nabla_{22},\nabla_{23},\nabla_{33},\Delta_{123}$ & $\nabla_{12}+\nabla_{33}+\Delta_{123}$ & $\langle e_4\rangle$\\
\hline
\end{longtable}

\subsubsection{Proof of Theorem~B}

\noindent{\bf Reduction to annihilator extensions.}
Let ${\rm P}$ be as in Theorem~B.

\emph{Algebras with an annihilator component.}
If ${\rm P}=I\oplus\C x$ with an annihilator component $\C x$, then $I$ is a $4$-dimensional
nilpotent $3$-Poisson algebra with both multiplications non-zero, i.e.\
$I\in\{\PP^4_3,\PP^4_5,\PP^4_6\}$, and we obtain $\PP_{01}$, $\PP_{02}$, $\PP_{03}$.

From now on ${\rm P}$ has no annihilator components.
Let $s=\dim\Ann({\rm P})$ and ${\rm P}'={\rm P}/\Ann({\rm P})$.
The $3$-Lie multiplication of ${\rm P}$ induces a skew-symmetric trilinear map on the
$(5-s)$-dimensional space ${\rm P}'$ with values in ${\rm P}$; hence $s\leq2$.

\medskip 
\noindent \emph{The case $s=2$.}
Then ${\rm P}'$ is a $3$-dimensional nilpotent commutative associative algebra and the trilinear part
of the cocycle is non-zero, so ${\rm P}'=\C^3$ by Subsection~\ref{sec:coh};
see Case~4 below.

\medskip 
\noindent \emph{The case $s=1$.}
Then ${\rm P}'$ is a $4$-dimensional nilpotent $3$-Poisson algebra.
If ${\rm P}'$ is commutative associative, the cocycle must have a non-zero trilinear part;
if ${\rm P}'\in\{\PP^4_3,\PP^4_5,\PP^4_6\}$, or if ${\rm P}'$ is one of the algebras $e_1e_2=e_3$ and
$e_1e_2=e_4,\ e_3e_3=e_4$, then by Table of cocycles $\Rad(\theta,\vartheta)\cap\Ann({\rm P}')\neq0$ for every cocycle (for $e_1e_2=e_3$ the element
$e_3\in\Ann({\rm P}')$ is not involved in any cocycle, and in the remaining cases no cocycle involves
$\Ann({\rm P}')$ at all), which is impossible.
Therefore ${\rm P}'\in\{\C^4,\ e_1e_1=e_2,\ \mathfrak n_1\}$; these cases are treated in
Cases~1, 2 and~3 below.
 
\begin{enumerate}
    \item[{\bf Case 1:}] one-dimensional annihilator extensions of $\C^4$.

Here ${\rm H}^{2,3}(\C^4,\C)={\rm Z}^{2,3}(\C^4,\C)$ and $\Aut(\C^4)=\GL_4(\C)$.
Let $\mathbb W=\langle\big[(\theta,\vartheta)\big]\rangle\in\T_1(\C^4)$ with $\vartheta\neq0$.
Every non-zero skew-symmetric trilinear form on $\C^4$ is decomposable, so we may assume
$\vartheta=\Delta_{123}$; the radical of $\Delta_{123}$ is $\langle e_4\rangle$, and the stabilizer
of the line $\C\Delta_{123}$ consists of the automorphisms with $\phi(e_4)\in\C e_4$.
The condition $\Rad(\theta,\vartheta)=0$ means $\theta(e_4,-)\neq0$.

If $\theta(e_4,e_4)\neq0$, then replacing $e_i$ by $e_i-\frac{\theta(e_i,e_4)}{\theta(e_4,e_4)}e_4$
($i\leq3$) we get $\theta=c\nabla_{44}+\eta$, where $\eta$ is a symmetric form on
$\langle e_1,e_2,e_3\rangle$.
Using $\GL_3(\C)$ on $\langle e_1,e_2,e_3\rangle$ (which multiplies $\Delta_{123}$ by the
determinant) we reduce $\eta$ to $0$, $\nabla_{11}$, $\nabla_{11}+\nabla_{22}$ or
$\nabla_{11}+\nabla_{22}+\nabla_{33}$, and a diagonal automorphism normalizes the coefficients.
We obtain $\PP_{04}$, $\PP_{05}$, $\PP_{06}$, $\PP_{07}$ (with $e_5$ spanning $\mathbb V$).

If $\theta(e_4,e_4)=0$, then we may assume $\theta(e_4,e_1)=1$ and $\theta(e_4,e_2)=\theta(e_4,e_3)=0$;
replacing $e_1$ by $e_1-\frac12\theta(e_1,e_1)e_4$ and $e_j$ by $e_j-\theta(e_1,e_j)e_4$ ($j=2,3$),
which does not change $\Delta_{123}$, we get $\theta=\nabla_{14}+\eta$, where $\eta$ is a symmetric
form on $\langle e_2,e_3\rangle$.
Reducing $\eta$ to $0$, $\nabla_{22}$ or $\nabla_{22}+\nabla_{33}$ and normalizing the coefficients,
we obtain $\PP_{08}$, $\PP_{09}$, $\PP_{10}$.

These algebras are pairwise non-isomorphic: in all of them $\Ann({\rm P})=\langle e_5\rangle$, the
subspace $K=\big\{x:\ [x,{\rm P},{\rm P}]=0\big\}$ equals $\langle e_4,e_5\rangle$, and the algebras are
distinguished by the rank of the form induced by the multiplication on ${\rm P}/\Ann({\rm P})$ and by
the condition $K\cdot K=0$.

\item[{\bf Case 2:}] one-dimensional annihilator extensions of $e_1e_1=e_2$.

Let ${\rm P}'$ be the algebra $e_1e_1=e_2$ (with basis $e_1,\dots,e_4$).
By Table of cocycles, every $\mathbb W\in\T_1({\rm P}')$ is spanned by
$\big[(\theta,\vartheta)\big]$ with
$\theta=\alpha_{12}\nabla_{12}+\alpha_{13}\nabla_{13}+\alpha_{14}\nabla_{14}+\eta$,
where $\eta$ is a symmetric form on $\langle e_3,e_4\rangle$, and
$\vartheta=\alpha_{134}\Delta_{134}$.
Since $e_2\notin\Rad(\theta,\vartheta)$ we have $\alpha_{12}\neq0$, and since the bracket is non-zero,
$\alpha_{134}\neq0$.
The automorphism $e_3\mapsto e_3-\frac{\alpha_{13}}{\alpha_{12}}e_2$,
$e_4\mapsto e_4-\frac{\alpha_{14}}{\alpha_{12}}e_2$ gives $\alpha_{13}=\alpha_{14}=0$.
The automorphisms acting by $g\in\GL_2(\C)$ on $\langle e_3,e_4\rangle$ reduce $\eta$ to $0$,
$\nabla_{33}$ or $\nabla_{33}+\nabla_{44}$, and an automorphism
$\mathrm{diag}(a,a^2,b,b)$ normalizes the coefficients.
We obtain $\PP_{11}$, $\PP_{12}$, $\PP_{13}$, which are distinguished by the rank of $\eta$
(the rank of the multiplication restricted to $K=\big\{x:\ x\cdot{\rm P}\subseteq\Ann({\rm P})\big\}$
modulo ${\rm P}^2$).

\item[{\bf Case 3:}] one-dimensional annihilator extensions of $\mathfrak n_1$.

Let $\mathbb W=\langle\big[(\theta,\vartheta)\big]\rangle\in\T_1(\mathfrak n_1)$.
By Table of cocycles, $\theta$ is a symmetric form on $L=\langle e_1,e_2,e_3\rangle$, and modulo
${\rm B}^{2,3}$ we can write $\vartheta(x,y,e_4)=\omega(x,y)$ for a skew-symmetric form $\omega$ on $L$,
$\vartheta(L,L,L)=0$.
Since $e_4\notin\Rad(\theta,\vartheta)$, $\omega\neq0$, so $\omega$ has rank $2$ and a kernel line
$\ell\subseteq L$.
Every $g\in\GL(L)$ extends to an automorphism of $\mathfrak n_1$ with $e_4\mapsto(\det g)e_4$, and it
acts by $\theta\mapsto g^t\theta g$, $\omega\mapsto(\det g)g^t\omega g$.
Since the skew-symmetric forms with a given kernel form a line, we may assume
$\omega=\Delta_{124}$, i.e.\ $\ell=\langle e_3\rangle$, and it remains to classify the non-zero
symmetric forms $\theta$ on $L$ up to the stabilizer of $\ell$ and scalars
($\theta=0$ gives $\mathfrak n_2$).
The orbits are determined by $\rank\theta$, by the position of $\ell$ with respect to
$\Rad(\theta)$, and by whether $\ell$ is isotropic; see Table below.
\begin{longtable}{c|l|l|c}
\hline
$\rank\theta$ & position of $\ell$ & $\theta$ & algebra\\
\hline
\endfirsthead
\hline
$\rank\theta$ & position of $\ell$ & $\theta$ & algebra\\
\hline
\endhead
$1$ & $\ell\subseteq\Rad\theta$ & $\nabla_{11}$ & $\PP_{14}$\\
\hline
$1$ & $\theta(\ell,\ell)\neq0$ & $\nabla_{33}$ & $\PP_{15}$\\
\hline
$2$ & $\ell=\Rad\theta$ & $\nabla_{12}$ & $\PP_{16}$\\
\hline
$2$ & $\theta(\ell,\ell)\neq0$ & $\nabla_{22}+\nabla_{33}$ & $\PP_{17}$\\
\hline
$2$ & $\ell\neq\Rad\theta$, $\theta(\ell,\ell)=0$ & $\nabla_{13}$ & $\PP_{18}$\\
\hline
$3$ & $\theta(\ell,\ell)\neq0$ & $\nabla_{12}+\nabla_{33}$ & $\PP_{19}$\\
\hline
$3$ & $\theta(\ell,\ell)=0$ & $\nabla_{13}+\nabla_{22}$ & $\PP_{20}$\\
\hline
\end{longtable}
For example, if $\rank\theta=3$ and $\theta(\ell,\ell)=0$, choose $e_3\in\ell$, then 
\begin{center}$e_1$ with
$\theta(e_1,e_3)=1$, $\theta(e_1,e_1)=0$, and $e_2\in\langle e_1,e_3\rangle^{\perp}$ with
$\theta(e_2,e_2)=1$;\end{center} the other cases are similar.
Choosing the kernel line $\ell=\langle e_1\rangle$ or $\ell=\langle e_2\rangle$ instead of
$\langle e_3\rangle$, i.e.\ $\omega=\Delta_{234}$ or $\omega=\Delta_{134}$, one obtains for
$\theta=\nabla_{12}$ the algebras
\begin{center}
$e_1e_2=e_5,\ [123]=e_4,\ [234]=e_5$ \quad and \quad
$e_1e_2=e_5,\ [123]=e_4,\ [134]=e_5$.
\end{center}
We discard them, since they are isomorphic to the already found algebra $\PP_{18}$ (via
$f_1=e_2,\,f_2=e_3,\,f_3=e_1,\,f_4=e_4,\,f_5=e_5$ and
$f_1=-e_1,\,f_2=e_3,\,f_3=e_2,\,f_4=e_4,\,f_5=-e_5$, respectively), and this is enough.
Finally, the automorphisms $\mathrm{diag}(a,b,c,abc,a^2b^2c)$ normalize all coefficients to $1$.
The seven algebras are pairwise non-isomorphic, because $\ell$ is determined by ${\rm P}$ as the
image in ${\rm P}/{\rm P}^2$ of $\big\{x:\ [x,{\rm P},{\rm P}^2]=0\big\}$.

\item[{\bf Case 4:}] two-dimensional annihilator extensions of $\C^3$.

Let $\mathbb W\in\T_2(\C^3)$ define an algebra with non-zero $3$-Lie multiplication.
Then $\mathbb W$ contains an element with non-zero trilinear part, and, since $\Rad(\Delta_{123})=0$,
the condition $\Rad\cap\Ann=0$ holds automatically.
Hence $\mathbb W=\langle(Q_1,\Delta_{123}),(Q_2,0)\rangle$, where $Q_1,Q_2$ are symmetric
forms on $\C^3$, $Q_2\neq0$, and $Q_1$ is defined modulo $\C Q_2$.
The corresponding algebra is
\begin{equation*}
x\cdot y=Q_1(x,y)e_4+Q_2(x,y)e_5,\qquad [123]=e_4.
\end{equation*}
An element $g\in\GL_3(\C)=\Aut(\C^3)$ maps $\mathbb W$ to
$\langle\big(g^tQ_1 g,\,\det(g)\Delta_{123}\big),(g^tQ_2 g,0)\rangle$.
Using scalar matrices, we see that $Q_1$ and $Q_2$ can be rescaled independently.
Therefore the orbits of $\Aut(\C^3)$ on such subspaces are in one-to-one correspondence with the
$\GL_3(\C)$-orbits of the following data:
\begin{enumerate}[(a)]
\item a non-zero symmetric form $Q_2$ up to scalars, if $Q_1\in\C Q_2$;
\item a pencil $\langle Q_1,Q_2\rangle$ of ternary quadratic forms of dimension $2$ together
with a marked member $[Q_2]$, otherwise.
\end{enumerate}

In case (a), $Q_2$ is determined by its rank, and we obtain $\PP_{21}$, $\PP_{22}$, $\PP_{23}$.

In case (b) we use the classification of pencils of complex symmetric matrices
(see~\cite{Th91}).
A pencil is \emph{singular} if all its members are degenerate, and \emph{regular} otherwise; in the
regular case $\det(sQ_1+tQ_2)$ is a non-zero binary cubic form, its roots are the degenerate
members, and the pencil is determined by its Segre symbol.
The orbits of $2$-dimensional pencils of ternary quadratic forms are listed in Table below
(in coordinates $x_1,x_2,x_3$).
\begin{longtable}{ll}
\hline
type & pencil\\
\hline
\endfirsthead
\hline
type & pencil\\
\hline
\endhead
singular & $\langle x_1^2,x_2^2\rangle$\\
\hline
singular & $\langle x_1^2,x_1x_2\rangle$\\
\hline
singular & $\langle x_1x_2,x_1x_3\rangle$\\
\hline
$[3]$ & $\langle x_1x_3+x_2^2,\,x_2x_3\rangle$\\
\hline
$\big[(21)\big]$ & $\langle x_1^2,\,x_1x_2+x_3^2\rangle$\\
\hline
$[21]$ & $\langle x_1x_2,\,x_2^2+x_3^2\rangle$\\
\hline
$\big[(11)1\big]$ & $\langle x_1^2+x_2^2,\,x_3^2\rangle$\\
\hline
$[111]$ & a pencil in $\langle x_1^2,x_2^2,x_3^2\rangle$ with three distinct degenerate members\\
\hline
\end{longtable}
The stabilizer of a pencil acts on the projective line of its members preserving the degenerate
members and their ranks.
In all cases except $[111]$ this action is transitive on every set of members with the same rank
and the same multiplicity as a root of the determinant (it contains the scalings
$(s:t)\mapsto(as:bt)$ in suitable coordinates, or, for $[3]$, the translations induced by
$x_1\mapsto x_1+\lambda x_2,\ x_2\mapsto x_2+\frac{\lambda}{2}x_3$).
This gives the algebras of Table below.
\begin{longtable}{ll}
\hline
pencil and marked member & algebras\\
\hline
\endfirsthead
\hline
pencil and marked member & algebras\\
\hline
\endhead
$\langle x_1^2,x_2^2\rangle$, marked member of rank $1$ or $2$: & $\PP_{25}$, $\PP_{26}$\\
\hline
$\langle x_1^2,x_1x_2\rangle$, marked member of rank $1$ or $2$: & $\PP_{27}$, $\PP_{28}$\\
\hline
$\langle x_1x_2,x_1x_3\rangle$: & $\PP_{24}$\\
\hline
$\big[(11)1\big]$, marked member of rank $1$,& \\
 of rank $2$ and degenerate, non-degenerate:  & $\PP_{29}$, $\PP_{30}$, $\PP_{39}^{0}$\\
\hline
$[21]$, marked double root, simple root, & \\
 non-degenerate member: &  $\PP_{31}$, $\PP_{32}$, $\PP_{33}$\\ 
 \hline
$[3]$, marked degenerate, non-degenerate member: & $\PP_{34}$, $\PP_{35}$\\
\hline
$\big[(21)\big]$, marked degenerate, non-degenerate member: & $\PP_{36}$, $\PP_{37}$ \\
\hline
\end{longtable}
For the pencil $\langle x_1x_2,x_1x_3\rangle$ the other choice of the marked member gives the algebra
$e_1e_2=e_4,\ e_1e_3=e_5,\ [123]=e_5$; we discard it, since it is isomorphic to the already found
algebra $\PP_{24}$ (via $f_1=e_1,\,f_2=e_3,\,f_3=-e_2,\,f_4=e_5,\,f_5=-e_4$), and this is enough.
In the case $[111]$ the three degenerate members are distinct points of the projective line, the
image of the stabilizer of the pencil is the group $S_3$ of all permutations of these three points,
and the three degenerate members form one orbit.
A degenerate marked member gives $\PP_{38}$.
A non-degenerate marked member is determined, up to the action of $S_3$, by its cross-ratio with
the three degenerate members; for the pencil of $\PP_{39}^{\alpha}$ the degenerate members are
$Q_1$, $Q_1-Q_2$ and $Q_1-\alpha Q_2$, and we obtain the algebras $\PP_{39}^{\alpha}$,
$\alpha\neq0,1$.
Two such algebras are isomorphic if and only if the parameters lie in the same orbit of the
anharmonic group generated by $\alpha\mapsto1-\alpha$ and $\alpha\mapsto\frac1\alpha$; explicitly,
\begin{gather*}
\PP_{39}^{\alpha}\cong\PP_{39}^{1/\alpha}:\ \ f_1=\alpha e_2,\ f_2=\alpha e_1,\ f_3=-\alpha e_3,\ f_4=\alpha^3e_4,\
f_5=\alpha^2e_5;\\
\PP_{39}^{\alpha}\cong\PP_{39}^{1-\alpha}:\ \ f_1=e_3,\ f_2=e_2,\ f_3=e_1,\ f_4=-e_4,\ f_5=e_4+e_5.
\end{gather*}
The algebras with a non-degenerate marked member of a pencil of type $\big[(11)1\big]$ are
$\PP_{39}^{0}\cong\PP_{39}^{1}$.
The isomorphism class of the algebra determines the pencil (as the image of
${\rm P}\cdot{\rm P}$ in the space of forms on ${\rm P}/\Ann({\rm P})$), the marked member (the forms
vanishing on $[{\rm P},{\rm P},{\rm P}]$), and the $j$-invariant
$j(\alpha)=\frac{(\alpha^2-\alpha+1)^3}{\alpha^2(\alpha-1)^2}$ of the cross-ratio; this proves that the algebras
are pairwise non-isomorphic, which completes the proof of Theorem~B.
\end{enumerate}\hfill$\Box$

\section{The geometric classification}\label{sec:geo}

Let $\mathbb V$ be a complex vector space of dimension $n$ with a fixed basis $e_1,\dots,e_n$.
A pair of a symmetric bilinear map $\mu_1:\mathbb V\times\mathbb V\to\mathbb V$ and a
skew-symmetric trilinear map $\mu_2:\mathbb V^{3}\to\mathbb V$ is determined by its structure
constants $c_{ij}^k$, $d_{ijl}^k\in\C$.
The set $\mathfrak P_n$ of pairs $\mu=(\mu_1,\mu_2)$ that define $3$-Poisson algebras is an affine
variety, given by the polynomial equations which express the associativity, the Filippov identity
and the Leibniz rule, and the set $\N_n\subseteq\mathfrak P_n$ of nilpotent $3$-Poisson algebras is a
closed subvariety (nilpotency is a closed condition, since ${\rm P}^{(2^n)}=0$ for every nilpotent
${\rm P}$ of dimension $n$).
The group $\GL(\mathbb V)$ acts on $\mathfrak P_n$ and on $\N_n$ by
\begin{center}$(g*\mu)(x,y)=g\mu_1(g^{-1}x,g^{-1}y)$, \qquad 
$(g*\mu)(x,y,z)=g\mu_2(g^{-1}x,g^{-1}y,g^{-1}z)$,\end{center} 
and the orbits $\mathcal{O}(\mu)$ are the isomorphism classes.
We say that $\mu$ \emph{degenerates} to $\lambda$, and write $\mu\to\lambda$, if $\lambda$ belongs to
the Zariski closure $\overline{\mathcal{O}(\mu)}$.
An algebra $\mu$ is \emph{rigid} in a $\GL(\mathbb V)$-stable subvariety $\mathfrak X$ (for us,
$\mathfrak X=\mathfrak P_4$ or $\mathfrak X=\N_5$) if $\mathcal{O}(\mu)$ is open in $\mathfrak X$.
Since $\mathfrak X$ is a finite union of orbit closures and closures of families of orbits, each
irreducible component of $\mathfrak X$ is either the closure of the orbit of a rigid algebra or the
closure of a union of orbits of a family of algebras.
We use the following standard facts (see, for example, \cite{GRH,MS}):
\begin{enumerate}[(1)]
\item $\dim \mathcal{O}(\mu)=n^2-\dim\Der(\mu)$, and if $\mu\to\lambda$ with $\lambda\not\cong\mu$, then
$\dim \mathcal{O}(\lambda)<\dim \mathcal{O}(\mu)$;
\item if there are $E_1(t),\dots,E_n(t)\in\mathbb V$, $t\in\C^{*}$, forming a basis for every $t$,
such that the structure constants of $\mu$ in the basis $E_1(t),\dots,E_n(t)$ have limits as
$t\to0$ equal to the structure constants of $\lambda$ in the basis $e_1,\dots,e_n$, then
$\mu\to\lambda$;
\item \emph{(non-degenerations)} let $\mathcal B$ be the Borel subgroup of lower triangular
matrices and let $\mathcal R\subseteq\mathfrak P_n$ be a closed $\mathcal B$-stable subset; if
$\mu\to\lambda$ and $\mu$ is represented by a point of $\mathcal R$, then $\lambda$ is also
represented by a point of $\mathcal R$ \cite{GRH}; if $\mathcal R$ is $\GL(\mathbb V)$-stable
(that is, it is given by a property invariant under isomorphisms), then the condition on the basis
is not needed.
\end{enumerate}
To use (3) we write $\mathcal R$ by conditions on the structure constants.
Put $A_i=\langle e_i,\dots,e_n\rangle$ and write
$A_pA_q\subseteq A_r$ for the conditions $c_{ij}^k=0$ ($i\geq p$, $j\geq q$, $k<r$),
$[A_p,A_q,A_s]\subseteq A_r$ for the conditions $d_{ijl}^k=0$ ($i\geq p$, $j\geq q$, $l\geq s$,
$k<r$), and $A_pA_q=0$, $[A_p,A_q,A_s]=0$ if the corresponding constants vanish for all $k$.
A set given by such conditions is closed and $\mathcal B$-stable.
So, to prove $\mu\not\to\lambda$ it is enough to give such conditions which hold for $\mu$ in the
basis $e_1,\dots,e_n$ and fail for $\lambda$ in every basis.

\subsection{The variety of complex \texorpdfstring{$4$}{4}-dimensional \texorpdfstring{$3$}{3}-Poisson algebras}\label{sec:geo4}

In this subsection we describe the variety $\mathfrak P_4$ of all (not necessarily nilpotent)
complex $4$-dimensional $3$-Poisson algebras.
Besides the algebras of Theorem~A, it contains the commutative associative algebras (with zero
bracket) and the $3$-Lie algebras (with zero multiplication).
We need the following $3$-Lie algebras with zero commutative associative multiplication:
\begin{equation*}
\mathfrak s:\ [123]=e_4,\ [124]=-e_3,\ [134]=e_2,\ [234]=-e_1;\qquad
\mathfrak h^{\alpha}:\ [134]=e_1,\ [234]=\alpha e_2.
\end{equation*}
The algebra $\mathfrak s$ is the simple $4$-dimensional $3$-Lie algebra \cite{F85}.
For $\alpha\beta\neq0$ we have $\mathfrak h^{\alpha}\cong\mathfrak h^{\beta}$ if and only if
$\beta\in\{\alpha,\alpha^{-1}\}$ (see the proof of Lemma~\ref{lem:fil4}).

\begin{lemma}\label{lem:com4}
The variety ${\rm Com}_4\subseteq\mathfrak P_4$ of complex $4$-dimensional commutative associative
algebras (with zero bracket) is irreducible: ${\rm Com}_4=\overline{\mathcal{O}({\rm A}_{17})}$, where
${\rm A}_{17}=\C\oplus\C\oplus\C\oplus\C$ is the algebra of Theorem~\ref{4-dim asscom}, and
$\dim{\rm Com}_4=16$.
\end{lemma}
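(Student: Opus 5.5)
The plan has two parts: show that every $4$-dimensional commutative associative algebra lies in $\overline{\mathcal{O}({\rm A}_{17})}$, and compute $\dim\mathcal{O}({\rm A}_{17})=16$. Irreducibility is then automatic, since an orbit closure is irreducible. Throughout, ${\rm Com}_4$ is the closed subvariety of $\mathfrak P_4$ cut out by $\mu_2=0$ and the associativity equations.

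\emph{Dimension of the orbit.} In ${\rm A}_{17}=\C^4$ with orthogonal idempotents $e_1,\dots,e_4$, every derivation $D$ satisfies $D(e_i)=D(e_i^2)=2e_iD(e_i)$. Multiplying by $e_i$ gives $e_iD(e_i)=0$, and hence $D(e_i)=0$. So $\Der({\rm A}_{17})=0$, and fact (1) gives $\dim\mathcal{O}({\rm A}_{17})=16-0=16$. Since every orbit in ${\rm Com}_4$ has dimension at most $16$, once we show ${\rm Com}_4=\overline{\mathcal{O}({\rm A}_{17})}$ it follows that $\dim{\rm Com}_4=16$.

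\emph{Every algebra is a degeneration of ${\rm A}_{17}$.} Here I would rely on the known geometric classification of $4$-dimensional commutative associative algebras, namely the result that the variety of $n$-dimensional commutative associative algebras is irreducible for $n\le 6$ \cite{Maz80,Poo08}. Its generic point is the semisimple algebra $\C^n$, which is rigid because $H^2=0$. Citing this settles the statement.

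For a self-contained argument, I would argue as follows. The unital $5$-dimensional commutative associative algebras correspond to $4$-dimensional nilpotent ones, and also to $4$-dimensional algebras ${\rm A}$ via unitization ${\rm A}\mapsto\C1\oplus{\rm A}$ when ${\rm A}$ is non-unital. The key idea is that $\C^4$ degenerates to every local unital algebra through ``colliding points''. If the scheme of a finite algebra is a limit of reduced schemes (smoothability), the algebra lies in the closure of the orbit of the semisimple algebra. Every finite scheme of length at most $7$ is smoothable, and every non-unital $4$-dimensional algebra embeds into a unital $5$-dimensional one. One then has to pass back from unital $5$-dimensional algebras to $4$-dimensional ones.

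A more hands-on route is explicit chains of degenerations, built with fact (2), following the table of Theorem~\ref{4-dim asscom}. The template is $\C^2\to\C[x]/(x^2)$, realized by the basis $E_1=e_1+e_2$ and $E_2=t(e_1-e_2)$. One first degenerates ${\rm A}_{17}$ to the unital algebras ${\rm A}_{18},\dots,{\rm A}_{25}$ by merging idempotents in this way. Sending the unit to a non-unit (for example $E_1=te_1$) then produces the non-unital and nilpotent algebras. Finally, scaling the whole basis, $E_i=te_i$, degenerates everything to the zero algebra.

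\emph{Main obstacle.} The hardest part is covering every algebra in the list, in particular the nilpotent ${\rm A}_{12}$–${\rm A}_{16}$, by explicit bases $E_i(t)$. I would first try to reduce to citing \cite{Poo08} (irreducibility in dimension $4$ is stated there) and use explicit degenerations only as illustrations.
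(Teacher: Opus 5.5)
Your computation $\Der({\rm A}_{17})=0$, hence $\dim\mathcal{O}({\rm A}_{17})=16$, is correct and matches the paper. The gap is in the main step.

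The irreducibility results you cite from \cite{Maz80,Poo08} concern \emph{unital} algebras. Through ${\rm N}\mapsto\C1\oplus{\rm N}$ they also cover nilpotent algebras of one dimension less. They say nothing directly about ${\rm Com}_4$, which also contains non-unital, non-nilpotent algebras such as ${\rm A}_{01}$--${\rm A}_{03}$, ${\rm A}_{07}$--${\rm A}_{11}$ and ${\rm A}_{26}$--${\rm A}_{29}$. So ``citing this settles the statement'' is not justified.

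In your self-contained sketch you name the missing step yourself (``one then has to pass back from unital $5$-dimensional algebras to $4$-dimensional ones'') but do not carry it out. That step is exactly the content of the lemma. Knowing that ${\rm A}^\sharp=\C1\oplus{\rm A}$ lies in $\overline{\mathcal{O}(\C^5)}$ does not by itself put ${\rm A}$ in $\overline{\mathcal{O}({\rm A}_{17})}$. You need characters $\chi_t$ of ${\rm B}_t\cong\C^5$ with $({\rm B}_t,\chi_t)\to({\rm A}^\sharp,\varepsilon)$, so that $\ker\chi_t\cong{\rm A}_{17}$ tends to $\ker\varepsilon={\rm A}$. If ${\rm A}$ is nilpotent, $\varepsilon$ is the only character of the local algebra ${\rm A}^\sharp$, and any limit of characters of ${\rm B}_t$ will do. But when ${\rm A}$ has a non-zero idempotent, ${\rm A}^\sharp$ has several characters. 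You must then show that $\varepsilon$ in particular is a limit of characters of nearby semisimple algebras.

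The paper supplies this through the incidence variety $\mathcal X$ of pairs $({\rm B},\chi)$. It is the spectrum of the universal algebra, hence finite and flat over the variety of unital structures. So the projection is open, and the preimage of the dense orbit of $\C^5$ is dense in $\mathcal X$.

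An alternative that avoids flatness is to split ${\rm A}=e{\rm A}\oplus N$, where $e$ is a maximal idempotent, $e{\rm A}$ is unital and $N=\{x:\ ex=0\}$ is nilpotent. You would then degenerate $\C^{\dim e{\rm A}}\to e{\rm A}$ (unital case of \cite{Maz80}) and $\C^{\dim N}\to N$ (the local case above) separately, and take direct sums. Your route through explicit bases $E_i(t)$ could also work in principle, but as you say it is not carried out, so it does not close the gap as written.
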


\begin{proof}
Since $\Der({\rm A}_{17})=0$, we have $\dim \mathcal{O}({\rm A}_{17})=16$.
Let ${\rm A}$ be a $4$-dimensional commutative associative algebra and let
${\rm A}^\sharp=\C1\oplus{\rm A}$ be the algebra obtained by adjoining a unit.
It is a unital commutative associative algebra of dimension $5$, and ${\rm A}$ is the kernel of the
character $\varepsilon:{\rm A}^\sharp\to\C$, $\varepsilon(1)=1$, $\varepsilon({\rm A})=0$.
The variety of unital commutative associative algebra structures on $\C^5$ (with a fixed unit) is
irreducible, and the orbit of $\C^5=\C\oplus\dots\oplus\C$ is dense in it \cite{Maz80}.
The pairs $({\rm B},\chi)$, where ${\rm B}$ is such a structure and $\chi$ is a character of ${\rm B}$,
form a closed subvariety $\mathcal X$ of the product of this variety with the dual space of $\C^5$,
and $\mathcal X$ is finite and flat over the variety of unital structures (it is the spectrum of the
universal algebra, which is a free module of rank $5$ over the coordinate ring).
A finite flat morphism is open and closed, so the preimage of the dense orbit of $\C^5$ is dense in
$\mathcal X$.
Hence $({\rm A}^\sharp,\varepsilon)$ is a limit of pairs $({\rm B}_t,\chi_t)$ with
${\rm B}_t\cong\C^5$.
The kernel of a character of $\C^5$ is isomorphic to ${\rm A}_{17}$, and the kernels
$\ker\chi_t$ tend to $\ker\varepsilon={\rm A}$; choosing bases of $\ker\chi_t$ depending
continuously on $t$, we obtain ${\rm A}\in\overline{\mathcal{O}({\rm A}_{17})}$.
\end{proof}

\begin{lemma}\label{lem:fil4}
The variety ${\rm Fil}_4\subseteq\mathfrak P_4$ of complex $4$-dimensional $3$-Lie algebras (with zero
multiplication) has two irreducible components,
\begin{equation*}
\overline{\mathcal{O}(\mathfrak s)}\quad\text{and}\quad\overline{\textstyle\bigcup_{\alpha\in\C}\mathcal{O}(\mathfrak h^{\alpha})},
\end{equation*}
of dimensions $10$ and $8$, respectively.
\end{lemma}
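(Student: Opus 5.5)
We use Filippov's classification of complex $4$-dimensional $3$-Lie algebras \cite{F85} and then argue geometrically. Filippov's list, up to isomorphism, consists of the abelian algebra $\C^4$; $\mathfrak n_1:[123]=e_4$; the algebras $[234]=e_1$ and $[134]=e_1$ (the latter of the form $\mathfrak h^{0}$); the family $\mathfrak h^{\alpha}$; the algebra $[234]=e_1,\ [134]=e_2$ with its variants; the algebra $[124]=e_3+\dots$; and the simple algebra $\mathfrak s$. For each of these we compute $\dim\Der$.

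\emph{Components.} Every $4$-dimensional $3$-Lie algebra is determined by a linear map $\C^4\to\Lambda^3(\C^4)^*\otimes\C^4\cong\mathfrak{gl}_4$ subject to the fundamental identity. Concretely, via the Hodge-type identification $[x,y,z]=B(\star(x\wedge y\wedge z))$ with $B\in\mathfrak{gl}_4$, the Filippov identity becomes a condition on $B$. Filippov's normal forms are: $B$ symmetric after rescaling, giving $\mathfrak s$ (for $B=\mathrm{Id}$), the algebras of lower rank, and the nilpotent ones; or $B$ of the special "solvable" type, giving $\mathfrak h^{\alpha}$.

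We compute $\Der(\mathfrak s)\cong\mathfrak{so}_4$, which has dimension $6$, so $\dim\mathcal O(\mathfrak s)=10$. For $\mathfrak h^{\alpha}$ with generic $\alpha$, the derivations are the maps preserving the flag $\langle e_1\rangle,\langle e_2\rangle$ and acting suitably on $\langle e_3,e_4\rangle$. We expect $\dim\Der=9$, so each orbit has dimension $7$ and the family closure has dimension $8$. We also check that $\mathfrak h^{\alpha}\cong\mathfrak h^{\beta}$ iff $\beta\in\{\alpha,\alpha^{-1}\}$: the ratio of the eigenvalues of the operator induced on $[\mathrm P,\mathrm P,\mathrm P]$ by the generic inner derivation $[-,e_3,e_4]$ is an invariant up to inversion, and swapping $e_1\leftrightarrow e_2$, $e_3\leftrightarrow e_4$ realizes the inversion.

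\emph{Degenerations.} We show that every algebra of Filippov's list lies in one of the two closures, using explicit bases $E_i(t)$ as in fact~(2). The algebras $[234]=e_1,\ [134]=e_2$ with symmetric $B$ of rank $\le3$, together with $\mathfrak n_1$ and $\C^4$, are obtained from $\mathfrak s$ by scaling the basis vectors, i.e.\ by degenerating the symmetric form $B=\mathrm{Id}$ to lower rank forms; this is the Inönü--Wigner contraction. The remaining solvable algebras, such as $[134]=e_1$ and the non-diagonalizable ones like $[134]=e_1,\ [234]=e_1+e_2$, are limits of $\mathfrak h^{\alpha}$, as $\alpha\to0$ or via a Jordan-block contraction $E_1=e_1,\ E_2=t(e_2-e_1)$ applied to the family with $\alpha\to1$.

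\emph{Non-degenerations.} Since $\dim\overline{\bigcup\mathcal O(\mathfrak h^\alpha)}=8<10$, the family cannot contain $\mathfrak s$. Conversely, $\mathfrak s\not\to\mathfrak h^{\alpha}$ with $\alpha\neq0$. To prove this, we plan to use the closed invariant condition "every inner derivation $[-,x,y]$ has trace zero": it holds for $\mathfrak s$ since $\mathfrak{so}_4\subseteq\mathfrak{sl}_4$, while in $\mathfrak h^{\alpha}$ the inner derivation $[-,e_3,e_4]$ has trace $1+\alpha$. This fails to separate the case $\alpha=-1$, so we add the invariant quadratic condition that the trace form $\operatorname{tr}([-,x,y]\circ[-,z,w])$ vanishes identically. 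This is a closed condition which holds in $\mathfrak h^{\alpha}$ only when $1+\alpha^2=0$, and we handle the remaining points by the open-versus-closed dimension argument: the family closure is irreducible of dimension $8$, and the generic member is not in the $10$-dimensional closure because its inner derivations are not all traceless. This step, choosing an invariant separating the family from $\overline{\mathcal O(\mathfrak s)}$ and checking that the list of degenerations is exhaustive, is the main obstacle. In practice the generic-trace argument suffices, since irreducibility of the family closure means that a single generic member lying outside $\overline{\mathcal O(\mathfrak s)}$ already proves that it is a separate component.
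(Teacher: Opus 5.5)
Your route (Filippov's list, explicit contractions, orbit dimensions, one closed invariant) differs from the paper's and can be completed. As written, however, it contains a false claim, a contraction that does not converge to the intended algebra, and an imprecise list.

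\emph{Non-degeneration.} The claim $\mathfrak s\not\to\mathfrak h^{\alpha}$ for all $\alpha\neq0$ is false at $\alpha=-1$.
\begin{itemize}
\item In the paper's notation $[x,y,z]=M(x\times y\times z)$, the tensor of $\mathfrak h^{-1}$ is $e_1\otimes e_2+e_2\otimes e_1$, which is symmetric of rank $2$.
\item Concretely, the basis $f_1=e_1+e_2$, $f_2=e_1-e_2$, $f_3=e_3$, $f_4=e_4$ turns your rank-$2$ algebra $[234]=e_1$, $[134]=e_2$ into $\mathfrak h^{-1}$. You yourself list that algebra among the degenerations of $\mathfrak s$.
\item So no invariant can separate this point. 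The quadratic trace-form condition is irrelevant anyway: a closed condition excluding $\mathfrak s\to\lambda$ must hold for $\mathfrak s$, but $\operatorname{tr}\big([-,e_3,e_4]^2\big)=-2$ in $\mathfrak s$.
\item In fact $[-,x,y]=M\circ\Omega_{x,y}$, where $\Omega_{x,y}:z\mapsto\det(x,y,z,-)$ is a skew map $\mathbb V\to\mathbb V^*$. Hence $\operatorname{tr}[-,x,y]$ depends only on the skew part $A$ of $M$. Since the $\Omega_{x,y}$ span $\Lambda^2\mathbb V^*$, the condition that all inner derivations are traceless is equivalent to $A=0$.
\item So this condition cuts out exactly $\overline{\mathcal O(\mathfrak s)}$, and $\mathfrak h^{\alpha}\in\overline{\mathcal O(\mathfrak s)}$ if and only if $\alpha=-1$.
\end{itemize}
Your last sentence is all that is needed: a single member, e.g.\ $\mathfrak h^{0}$ (trace $1$), violates this closed $\GL_4(\C)$-stable condition.

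\emph{Covering.} The Jordan contraction does not work as written. With $E_1=e_1$, $E_2=t(e_2-e_1)$, $E_3=e_3$, $E_4=e_4$ in $\mathfrak h^{\alpha}$ one gets $[E_2,E_3,E_4]=\alpha E_2+t(\alpha-1)E_1$. The $E_1$-coefficient tends to $0$, so the limit is $\mathfrak h^{1}$. Take instead $\alpha=1+t$ and $E_2=t^{-1}(e_2-e_1)$; then $[E_1,E_3,E_4]=E_1$, $[E_2,E_3,E_4]=E_1+(1+t)E_2$ and $[E_1,E_2,-]=0$.

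Your version of Filippov's list also needs replacing; it contains the duplicate $[234]=e_1\cong\mathfrak n_1$ and the unexplained $[124]=e_3+\dots$. The exact list, up to isomorphism, is:
\begin{itemize}
\item the five algebras with symmetric structure tensor of rank $0,\dots,4$, namely $\C^4$, $\mathfrak n_1$, $\mathfrak h^{-1}$, the rank-$3$ one and $\mathfrak s$;
\item the algebras $\mathfrak h^{\alpha}$ with $\alpha\neq-1$;
\item the Jordan algebra $[134]=e_1$, $[234]=e_1+e_2$.
\end{itemize}
With this list your covering argument goes through. Your dimension counts ($\dim\Der\mathfrak s=6$, $\dim\Der\mathfrak h^{\alpha}=9$) and the invariant separating $\mathfrak h^{\alpha}$ from $\mathfrak h^{\beta}$ are correct.

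\emph{Comparison.} Your identification of the bracket with $B\in\mathfrak{gl}_4$ is the paper's tensor $M$. The paper goes one step further and solves the Filippov identity in this language: either $A=0$, or $A=u\wedge v$ and $S\in\mathrm{Sym}^2\langle u,v\rangle$. This gives ${\rm Fil}_4=\mathcal S\cup\mathcal Y$ with no list and no contraction written down. Here $\mathcal S$ is a $10$-dimensional linear space and $\mathcal Y$ is the image of a rank-$4$ bundle over $G_2(\C^4)$, so closedness, irreducibility and $\dim\mathcal Y\leq8$ are automatic. Only the density of the family in $\mathcal Y$ and the invariant of $M_U$ remain to be checked. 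Your route instead uses Filippov's theorem as a black box and requires derivation algebras and explicit contractions, but in return it produces concrete degeneration bases. Irreducibility of your second set is automatic, since it is the closure of the image of $\GL_4(\C)\times\C$.
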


\begin{proof}
For $x,y,z\in\mathbb V$ denote by $x\times y\times z\in\mathbb V^*$ the linear form
$v\mapsto\det(x,y,z,v)$ (the determinant of the coordinates in the basis $e_1,\dots,e_4$).
Every skew-symmetric trilinear map on $\mathbb V$ is of the form
$[x,y,z]=M(x\times y\times z)$ for a unique linear map $M:\mathbb V^*\to\mathbb V$, that is, for a
unique $M\in\mathbb V\otimes\mathbb V$, and the action of $g\in\GL(\mathbb V)$ becomes
$M\mapsto\det(g)^{-1}gMg^{T}$.
Write $M=S+A$ with $S$ symmetric and $A$ skew-symmetric.
A direct computation shows that the Filippov identity holds if and only if
$A\wedge A=0$ and $S$ vanishes on the kernel of the skew-symmetric form dual to $A$; in other words,
either $A=0$, or $A=u\wedge v$ is decomposable and $S$ is a symmetric tensor in
$\langle u,v\rangle$.
Thus ${\rm Fil}_4=\mathcal S\cup\mathcal Y$, where $\mathcal S$ is the space of symmetric tensors
$M$ and $\mathcal Y$ is the set of tensors $M\in U\otimes U$ for some $2$-dimensional subspace
$U\subseteq\mathbb V$ (this agrees with the classification of the $(n+1)$-dimensional $n$-Lie
algebras \cite{F85}; see also \cite{BSZ11}).
The space $\mathcal S$ is irreducible of dimension $10$, and the non-degenerate symmetric tensors form
one orbit, namely $\mathcal{O}(\mathfrak s)$ (the algebra $\mathfrak s$ corresponds to $M=\Id$); so
$\mathcal S=\overline{\mathcal{O}(\mathfrak s)}$.
The set $\mathcal Y$ is the image of a vector bundle with fibre $U\otimes U$ of dimension $4$ over the
Grassmannian of $2$-planes in $\mathbb V$, which has dimension $4$; hence $\mathcal Y$ is closed and
irreducible of dimension at most $8$.
The algebra $\mathfrak h^{\alpha}$ corresponds to the tensor $M$ with $M(e_1^*)=-\alpha e_2$,
$M(e_2^*)=e_1$, $M(e_3^*)=M(e_4^*)=0$, which belongs to $U\otimes U$ for $U=\langle e_1,e_2\rangle$.
A generic element of $U\otimes U$ is equivalent to such a tensor, and if $M_U$ denotes the
$2\times2$ matrix of $M\in U\otimes U$, then the eigenvalues $-\alpha,-\alpha^{-1}$ of $M_U^{-1}M_U^{T}$ are
invariants of the action (up to the order).
Hence $\mathfrak h^{\alpha}\cong\mathfrak h^{\beta}$ ($\alpha\beta\neq0$) if and only if
$\beta\in\{\alpha,\alpha^{-1}\}$, $\dim \mathcal{O}(\mathfrak h^{\alpha})=7$ for $\alpha\neq1$, and
$\mathcal Y=\overline{\bigcup_{\alpha}\mathcal{O}(\mathfrak h^{\alpha})}$ has dimension $8$.
Finally, $\mathcal S\not\subseteq\mathcal Y$ by dimension, and $\mathcal Y\not\subseteq\mathcal S$,
since $\mathfrak h^{1}$ corresponds to a non-zero skew-symmetric tensor.
\end{proof}

\begin{theoremC}
The variety $\mathfrak P_4$ of complex $4$-dimensional $3$-Poisson algebras has dimension $16$ and it
has seven irreducible components:
\begin{gather*}
\overline{\mathcal{O}({\rm A}_{17})},\qquad
\overline{\mathcal{O}(\mathfrak s)},\qquad
\overline{\textstyle\bigcup_{\alpha\in\C}\mathcal{O}(\mathfrak h^{\alpha})},\qquad
\overline{\mathcal{O}(\PP^4_1)},\qquad
\overline{\mathcal{O}(\PP^4_4)},\qquad
\overline{\mathcal{O}(\PP^4_6)},\qquad
\overline{\mathcal{O}(\PP^4_7)},
\end{gather*}
of dimensions $16$, $10$, $8$, $10$, $10$, $10$ and $10$, respectively.
In particular, the rigid algebras of $\mathfrak P_4$ are ${\rm A}_{17}$, $\mathfrak s$, $\PP^4_1$,
$\PP^4_4$, $\PP^4_6$ and $\PP^4_7$.
\end{theoremC}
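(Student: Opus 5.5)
The plan is to write $\mathfrak P_4$ as a finite union of orbit closures and then decide which of them are maximal. By Theorem~A together with the two remarks above, every point of $\mathfrak P_4$ lies in ${\rm Com}_4$, in ${\rm Fil}_4$, or in one of the orbits $\mathcal O(\PP^4_1),\dots,\mathcal O(\PP^4_7)$. Lemmas~\ref{lem:com4} and~\ref{lem:fil4} already give ${\rm Com}_4=\overline{\mathcal O({\rm A}_{17})}$ and ${\rm Fil}_4=\overline{\mathcal O(\mathfrak s)}\cup\overline{\bigcup_\alpha\mathcal O(\mathfrak h^\alpha)}$. Hence
\[
\mathfrak P_4=\overline{\mathcal O({\rm A}_{17})}\cup\overline{\mathcal O(\mathfrak s)}\cup\overline{\textstyle\bigcup_\alpha\mathcal O(\mathfrak h^\alpha)}\cup\bigcup_{i=1}^7\overline{\mathcal O(\PP^4_i)},
\]
which is a finite union of irreducible closed sets. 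It remains to discard the non-maximal members and to show that the remaining ones are pairwise incomparable.

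First I compute $\dim\Der$ for each $\PP^4_i$. Derivations must lie in $\Der({\rm A})$ of the underlying commutative algebra and also preserve $\theta$. Using the explicit automorphism groups of ${\rm A}_{03},{\rm A}_{04},{\rm A}_{05},{\rm A}_{14},{\rm A}_{22}$ written above (differentiated at the identity) together with the formulas for $\theta*\phi$, this is quick. For example, for $\PP^4_1$ the group is $\GL_3$ acting on $N$ with stabilizer condition $\det(g)g^{-1}v=v$, which gives a stabilizer of dimension $9-3=6$. I expect to get orbit dimension $10$ for $\PP^4_1,\PP^4_4,\PP^4_6,\PP^4_7$ and smaller dimensions for $\PP^4_2,\PP^4_3,\PP^4_5$.

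Next I show the non-maximal closures are absorbed, by explicit degenerations via fact~(2):
\begin{itemize}
\item $\PP^4_2$ and $\PP^4_3$: degenerate from $\PP^4_4$ or $\PP^4_1$ (for instance $\PP^4_1\to\PP^4_2$ by letting $e_1$ become nilpotent, $E_1=te_1+e_2$-type bases).
\item $\PP^4_5$: from $\PP^4_4$ by scaling $e_4$.
\item $\PP^4_3$: also from $\PP^4_2$.
\end{itemize}
If a needed degeneration is hard to find directly, I can instead check that the target lies in a family closure using the one-parameter bases $E_i(t)$.

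The main obstacle is proving that the seven listed closures are pairwise incomparable. Components of dimension $16$, $10$ and $8$ cannot lie in an equal- or smaller-dimensional closure except by equality, and equality of orbit closures of equal dimension forces isomorphism. So the only thing that needs checking is that none of the $10$- or $8$-dimensional closures lies in $\overline{\mathcal O({\rm A}_{17})}$. This is immediate, because ${\rm Com}_4$ is closed and consists of algebras with zero bracket, while the smaller candidates all have non-zero bracket. Likewise no $\PP^4_i$ lies in a ${\rm Fil}_4$ component, since those have zero multiplication. The $8$-dimensional family closure is not contained in $\overline{\mathcal O(\mathfrak s)}$, by Lemma~\ref{lem:fil4}, and not in any $\overline{\mathcal O(\PP^4_i)}$ with $i\in\{1,4,6,7\}$. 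To prove the latter I would use closed invariant conditions in the spirit of fact~(3): $\PP^4_6$ satisfies the nilpotency and $\dim$-of-$[{\rm P},{\rm P},{\rm P}]\le 1$ conditions; $\PP^4_1,\PP^4_4,\PP^4_7$ satisfy $\rank$ of the bracket $\le1$, i.e.\ $\dim[{\rm P},{\rm P},{\rm P}]\le1$, which is a closed condition, whereas $\mathfrak h^\alpha$ with $\alpha\neq0$ has $\dim[{\rm P},{\rm P},{\rm P}]=2$. Since $\mathfrak s$ also has bracket image of dimension $4$, the same condition rules out $\mathfrak s\in\overline{\mathcal O(\PP^4_i)}$. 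This yields the seven components with the stated dimensions, $\dim\mathfrak P_4=16$, and rigidity of the six algebras with open orbits.
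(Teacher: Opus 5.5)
Your proposal is correct and follows essentially the paper's route: the same decomposition of $\mathfrak P_4$ via Theorem~A and Lemmas~\ref{lem:com4}, \ref{lem:fil4}, the same dimension count for the orbits, and the same closed condition $\dim[{\rm P},{\rm P},{\rm P}]\le 1$ to keep the $\mathfrak h^\alpha$-family out of the $10$-dimensional closures.

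The only difference is which degenerations absorb $\PP^4_2,\PP^4_3,\PP^4_5$: the paper uses $\PP^4_4\to\PP^4_2$, $\PP^4_6\to\PP^4_3$ and $\PP^4_6\to\PP^4_5$, while you use $\PP^4_1\to\PP^4_2$, $\PP^4_2\to\PP^4_3$ and $\PP^4_4\to\PP^4_5$. Yours also exist, but $\PP^4_4\to\PP^4_5$ is not a pure rescaling, since diagonal bases cannot turn $[124]=e_4$ into $[124]=e_3$. You can take instead $E_1=te_1$, $E_2=e_2$, $E_3=te_3$, $E_4=e_4-e_3$, and for $\PP^4_1\to\PP^4_2$ take $E_1=te_1+e_3$, $E_2=t^2e_1$, $E_3=e_2$, $E_4=-e_4$.
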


\begin{proof}
The orbit dimensions of the algebras which give the irreducible components (computed as $16-\dim\Der$)
and the dimensions of these components are given in Table below.
For the family $\mathfrak h^{\alpha}$ the parameter adds $1$: the orbits of $\mathfrak h^{\alpha}$ with
$\alpha\neq1$ have dimension $7$, only finitely many of them coincide, so the component has dimension
$7+1=8$; the point $\alpha=1$ is excluded in the table since $\dim\Der(\mathfrak h^{1})=11$ and
$\dim \mathcal{O}(\mathfrak h^{1})=5$.
\begin{longtable}{lccccccc}
\hline
& ${\rm A}_{17}$ & $\mathfrak s$ & $\mathfrak h^{\alpha}$, $\alpha\neq1$ & $\PP^4_1$ & $\PP^4_4$ & $\PP^4_6$ & $\PP^4_7$\\
\hline
\endfirsthead
\hline
& ${\rm A}_{17}$ & $\mathfrak s$ & $\mathfrak h^{\alpha}$, $\alpha\neq1$ & $\PP^4_1$ & $\PP^4_4$ & $\PP^4_6$ & $\PP^4_7$\\
\hline
\endhead
orbit & $16$ & $10$ & $7$ & $10$ & $10$ & $10$ & $10$\\
component & $16$ & $10$ & $7+1=8$ & $10$ & $10$ & $10$ & $10$\\
\hline
\end{longtable}

\emph{Every algebra belongs to one of the seven sets.}
Let ${\rm P}\in\mathfrak P_4$.
If the bracket of ${\rm P}$ is zero, then ${\rm P}\in\overline{\mathcal{O}({\rm A}_{17})}$ by
Lemma~\ref{lem:com4}; if the multiplication of ${\rm P}$ is zero, then ${\rm P}$ belongs to
$\overline{\mathcal{O}(\mathfrak s)}$ or to $\overline{\bigcup_\alpha \mathcal{O}(\mathfrak h^{\alpha})}$ by
Lemma~\ref{lem:fil4}.
Otherwise ${\rm P}$ is one of the algebras $\PP^4_1,\dots,\PP^4_7$ of Theorem~A, and the remaining
algebras $\PP^4_2$, $\PP^4_3$, $\PP^4_5$ are obtained by the degenerations of Table below.
\begin{longtable}{llllll}
 
\hline
$\PP^4_4\to\PP^4_2$ & $E_1=t^{-1}(e_1+e_2),$&$ E_2=2t^{-2}e_3,$&$ E_3=e_4,$&$ E_4=-te_2$\\
\hline
$\PP^4_6\to\PP^4_3$ & $E_1=t^{-1}e_3,$&$ E_2=t^{-2}e_4,$&$ E_3=e_1,$&$ E_4=t^{-1}e_2$\\
\hline
$\PP^4_6\to\PP^4_5$ & $E_1=t^{-1}e_1,$&$ E_2=t^{-1}e_2,$&$ E_3=t^{-2}e_4,$&$ E_4=e_3$\\
\hline
\end{longtable}
For example, in the first case
$E_1E_1=2t^{-2}e_3=E_2$, $E_1E_4=-e_3=-\frac{t^{2}}{2}E_2\to0$, and
$[E_1,E_3,E_4]=-[e_1+e_2,e_4,e_2]=e_4=E_3$, while all other products of the vectors $E_i$ vanish.

\emph{None of the seven sets is contained in another one.}
All seven sets are closed and irreducible, so it is enough to check that no algebra of the list
degenerates to another one (for the family, that the closure of the union of the orbits of the
family is not contained in another set and does not contain the other algebras).
\begin{enumerate}
\item The set of algebras with zero bracket is closed, so $\overline{\mathcal{O}({\rm A}_{17})}$ contains no
algebra with a non-zero bracket; the other six sets have dimension less than $16$, so they do not
contain ${\rm A}_{17}$.
\item The set of algebras with zero multiplication is closed, so $\overline{\mathcal{O}(\mathfrak s)}$ and
$\overline{\bigcup_\alpha \mathcal{O}(\mathfrak h^{\alpha})}$ do not contain $\PP^4_1,\PP^4_4,\PP^4_6,\PP^4_7$.
\item The orbits of $\mathfrak s,\PP^4_1,\PP^4_4,\PP^4_6,\PP^4_7$ have the same dimension $10$, so
there are no degenerations between these algebras, and the family set has dimension $8<10$.
\item The condition $\dim[{\rm P},{\rm P},{\rm P}]\leq1$ is closed and $\GL(\mathbb V)$-stable; it holds
for $\PP^4_1,\PP^4_4,\PP^4_6,\PP^4_7$ and fails for $\mathfrak h^{\alpha}$ with $\alpha\neq0$ (where
$[{\rm P},{\rm P},{\rm P}]=\langle e_1,e_2\rangle$).
Hence the family set is not contained in $\overline{\mathcal{O}(\PP^4_i)}$, $i=1,4,6,7$.
Similarly, $\mathfrak s$ is not in the family set by dimension, and the family set is not contained
in $\overline{\mathcal{O}(\mathfrak s)}$ by Lemma~\ref{lem:fil4}.
\end{enumerate}
Therefore the seven sets are the irreducible components of $\mathfrak P_4$, and
$\dim\mathfrak P_4=16$.
\end{proof}

\subsection{The variety of complex \texorpdfstring{$5$}{5}-dimensional nilpotent \texorpdfstring{$3$}{3}-Poisson algebras}\label{sec:geo5}

The orbit dimensions (computed as $25-\dim\Der$) of the algebras which give the irreducible
components of $\N_5$, and the dimensions of these components, are given in Table below.

\begin{longtable}{lccccc}
\hline
& $\AAA_2$ & $\PP_{07}$ & $\PP_{13}$ & $\PP_{19}$ & $\PP_{39}^{\alpha\notin\{0,1\}}$\\
\hline
\endfirsthead
\hline
& $\AAA_2$ & $\PP_{07}$ & $\PP_{13}$ & $\PP_{19}$ & $\PP_{39}^{\alpha\notin\{0,1\}}$\\
\hline
\endhead
orbit & $20$ & $18$ & $18$ & $18$ & $19$\\
component & $20$ & $18$ & $18$ & $18$ & $19+1=20$\\
\hline
\end{longtable}

For the family $\PP^{\alpha}_{39}$ the parameter adds $1$: for $\alpha\notin\{0,1\}$ we have
$\dim\Der(\PP^{\alpha}_{39})=6$, so $\dim \mathcal{O}(\PP^{\alpha}_{39})=19$, and since each orbit contains at
most six members of the family (Theorem~B), the union of the orbits has dimension $19+1=20$.
The two points $\alpha=0$ and $\alpha=1$ are excluded in the table because there the orbit is smaller:
$\PP^{0}_{39}\cong\PP^{1}_{39}$, $\dim\Der(\PP^{0}_{39})=7$ and $\dim \mathcal{O}(\PP^{0}_{39})=18$.
In terms of Case~4 of the proof of Theorem~B, the pencil of $\PP^{\alpha}_{39}$ is spanned by
$Q_1=x_1^2+\alpha x_2^2$ and the marked member $Q_2=x_1^2+x_2^2+x_3^2$, and
$\det(sQ_1+tQ_2)=t(s+t)(\alpha s+t)$ has a multiple root exactly for $\alpha\in\{0,1\}$; these
values do not affect the dimension of the component.
The variety of complex $5$-dimensional nilpotent commutative associative algebras is irreducible (see \cite{MS}) a
nd each algebra from this variety degenerates from

\begin{longtable}{ll}
\hline
$\mathcal A_{2}:$ & $e_{1} e_{1} = e_{2},\ e_{1} e_{2} = e_{3},\ e_{1} e_{3} = e_{4},\ e_{1} e_{4} = e_{5},\ e_{2} e_{2} = e_{4},\ e_{2} e_{3} = e_{5}$ \\
\hline
\end{longtable}

\begin{theoremD}
The variety $\N_5$ of complex $5$-dimensional nilpotent $3$-Poisson algebras has dimension $20$ and
it has five irreducible components:
\begin{equation*}
\overline{\mathcal{O}(\AAA_2)},\qquad
\overline{\textstyle\bigcup_{\alpha\in\C}\mathcal{O}(\PP_{39}^{\alpha})},\qquad
\overline{\mathcal{O}(\PP_{07})},\qquad
\overline{\mathcal{O}(\PP_{13})},\qquad
\overline{\mathcal{O}(\PP_{19})},
\end{equation*}
of dimensions $20$, $20$, $18$, $18$ and $18$, respectively.
In particular, the rigid algebras of $\N_5$ are $\AAA_2$, $\PP_{07}$, $\PP_{13}$ and $\PP_{19}$.
\end{theoremD}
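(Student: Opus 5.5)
I would follow the scheme of the proof of Theorem~C.

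\emph{Every algebra lies in one of the five sets.}
The algebras of $\N_5$ fall into three groups.
\begin{enumerate}
\item The algebras with zero bracket are the nilpotent commutative associative algebras, all of which lie in $\overline{\mathcal O(\AAA_2)}$ by \cite{MS}.
\item The algebras with zero multiplication are $\C^5$, $\mathfrak n_1\oplus\C$ and $\mathfrak n_2$.
\item The algebras with both multiplications non-zero are $\PP_{01},\dots,\PP_{39}^{\alpha}$ of Theorem~B.
\end{enumerate}
So for every algebra of the last two groups I would exhibit a degeneration, as in fact~(2), from $\PP_{39}^{\alpha}$ (generic $\alpha$), $\PP_{07}$, $\PP_{13}$ or $\PP_{19}$. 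Degenerations are transitive, so chains are allowed, and I would organize the search by orbit dimension.

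\begin{enumerate}
\item The Case~4 algebras $\PP_{21},\dots,\PP_{38}$ and $\PP_{39}^{0}$ should come from $\PP_{39}^{\alpha}$. Concretely, I would degenerate the marked pencil inside $\GL_3$ (the pencil classification is itself a degeneration hierarchy) while letting $e_4,e_5$ rescale. For $\PP_{38}$ and other members in the closure of the family, I would use a parameter-dependent basis with $\alpha=\alpha(t)$.
\item The Case~3 algebras $\PP_{14},\dots,\PP_{18}$ should come from $\PP_{19}$ by degenerating the form $\theta$. The same goes for $\mathfrak n_2$, via $\theta\to0$ by scaling $e_5$.
\item The Case~1 algebras should come from $\PP_{07}$ by letting the quadratic form degenerate.
\item The Case~2 algebras should come from $\PP_{13}$.
\item The algebras $\PP_{01},\PP_{02},\PP_{03}$ and $\mathfrak n_1\oplus\C$ have annihilator components and small orbits. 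I expect each to be reachable, for example $\PP_{07}\to\PP_{03}$-type limits or $\PP_{19}\to\PP_{01}$.
\end{enumerate}

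For each candidate I would compute $\dim\Der$ to check that the orbit dimension decreases, and then write explicit $E_i(t)$.

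\emph{No set is contained in another.}
\begin{itemize}
\item $\overline{\mathcal O(\AAA_2)}$ lies in the closed set where the bracket is zero. Since it is irreducible of dimension $20$, the family component (also of dimension $20$) is not contained in it, nor conversely.
\item $\PP_{07}$, $\PP_{13}$, $\PP_{19}$ have orbits of dimension $18$. Hence none of them degenerates to another, and none contains $\AAA_2$ or the $20$-dimensional family set.
\end{itemize}
It remains to show that $\PP_{07},\PP_{13},\PP_{19}$ are not in the closure of the family, and are not in $\overline{\mathcal O(\AAA_2)}$ (clear, since the bracket is non-zero). For this I would use closed invariant conditions.

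\begin{itemize}
\item $\PP_{39}^{\alpha}$ satisfies $\dim[{\rm P},{\rm P},{\rm P}]\le1$ and $\dim\Ann\ge2$. Both conditions are closed: the first is a rank condition, and the second is the rank condition on the map $x\mapsto(x\cdot-,[x,-,-])$. This excludes $\PP_{19}$, which has $\dim[{\rm P},{\rm P},{\rm P}]=2$.
\item It also excludes $\PP_{07}$ and $\PP_{13}$, whose annihilator is $\langle e_5\rangle$, one-dimensional.
\end{itemize}

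Since every algebra lies in one of the five sets, these closed irreducible sets form the components, and $\dim\N_5=20$. Rigidity of the four algebras follows.

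The main obstacle is the covering step. It requires finding explicit degenerations of roughly 40 algebras, especially into the less generic pencils ($\PP_{24}$, $\PP_{27}$, $\PP_{28}$, and the type $[3]$ members). There I would first try the transitive route through the pencil degeneration diagram $[111]\to[21]\to[3]$ and onward to the singular pencils.
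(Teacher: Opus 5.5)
Your separation argument is correct, and it is leaner than the paper's list of twenty Borel-type non-degenerations. Equal orbit dimension $18$ rules out degenerations among $\PP_{07},\PP_{13},\PP_{19}$. Dimension, together with closedness of the zero-bracket locus, separates $\overline{\mathcal O(\AAA_2)}$ from the family closure. The closed $\GL(\mathbb V)$-invariant condition $\dim\Ann({\rm P})\geq2$ (exactly what the paper's conditions $A_1A_4=0$, $[A_1,A_1,A_4]=0$ express) keeps $\PP_{07},\PP_{13},\PP_{19}$ out of $\overline{\bigcup_\alpha\mathcal O(\PP_{39}^{\alpha})}$.

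The gap is the covering step. You only assert that roughly forty algebras ``should come from'' one of the generators, without a single explicit degeneration, so you have not shown that the five sets exhaust $\N_5$. Your Case~3 list also omits $\PP_{20}$; the paper obtains it as $\PP_{19}\to\PP_{20}$.

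The missing idea, which makes this step almost free, is the following. For $k=1,2$ let $\mathcal Y_k\subseteq\N_5$ be the set of algebras admitting a $k$-dimensional subspace $U$ with ${\rm P}^2\subseteq U\subseteq\Ann({\rm P})$.
\begin{itemize}
\item For such $U$ all triple products vanish. Hence \emph{every} pair of a symmetric bilinear and a skew-symmetric trilinear map ${\rm P}/U\to U$ is a nilpotent $3$-Poisson structure.
\item So $\mathcal Y_k$ is the image of a vector bundle over the Grassmannian $G_k(\C^5)$. It is therefore closed and irreducible, of dimension $4+14=18$ for $k=1$ and $6+14=20$ for $k=2$ (generically $U={\rm P}^2$).
\item Comparing with orbit dimensions gives $\mathcal Y_1=\overline{\mathcal O(\PP_{07})}$ and $\mathcal Y_2=\overline{\bigcup_\alpha\mathcal O(\PP_{39}^{\alpha})}$, with no explicit degeneration at all.
\end{itemize}
This covers $\PP_{01},\dots,\PP_{10}$, $\mathfrak n_1\oplus\C$ and all of $\PP_{21},\dots,\PP_{39}^{\alpha}$ at once. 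Only $\PP_{11},\PP_{12},\PP_{14},\dots,\PP_{18},\PP_{20}$ and $\mathfrak n_2$ then need explicit degenerations, and these are the chains from $\PP_{13}$ and $\PP_{19}$ that you anticipated.

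Without this idea, your Case~4 programme is harder than you suggest. Not only $\PP_{38}$ but also $\PP_{33}$ (a pencil of type $[21]$ with non-degenerate marked member) has $\dim\Der=6$, i.e.\ a $19$-dimensional orbit, the same as $\PP_{39}^{\alpha}$ for $\alpha\neq0,1$. So $\PP_{33}$ is not a degeneration of any single $\PP_{39}^{\alpha}$. Already the step $[111]\to[21]$ of your proposed transitive route requires a genuinely $t$-dependent $\alpha(t)$ together with a $t$-dependent basis.
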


\begin{proof}
For $k=1,2$ let $\mathcal Y_k\subseteq\N_5$ be the set of algebras ${\rm P}$ having a
$k$-dimensional subspace $U$ with ${\rm P}^2\subseteq U\subseteq\Ann({\rm P})$.
For such a pair $({\rm P},U)$ all products of three elements vanish, so every pair of a symmetric
bilinear map and a skew-symmetric trilinear map ${\rm P}/U\to U$ defines a nilpotent $3$-Poisson
algebra.
Hence $\mathcal Y_k$ is the image of a vector bundle over the Grassmannian $G_k(\mathbb V)$ with
fibre of dimension $k\big(\binom{6-k}{2}+\binom{5-k}{3}\big)$; it is closed (the Grassmannian is
projective) and irreducible.
Since $U={\rm P}^2$ for a generic point, $\dim\mathcal Y_1=4+14=18$ and $\dim\mathcal Y_2=6+14=20$.
Now $\PP_{07}\in\mathcal Y_1$ and $\dim \mathcal{O}(\PP_{07})=18$, so $\mathcal Y_1=\overline{\mathcal{O}(\PP_{07})}$.
The algebras $\PP_{39}^{\alpha}$ belong to $\mathcal Y_2$; their orbits have dimension $19$ for
$\alpha\neq0,1$, and two of them are isomorphic only for finitely many values of the parameter, so
$\bigcup_{\alpha}\mathcal{O}(\PP^{\alpha}_{39})$ has dimension $20$ and
$\mathcal Y_2=\overline{\bigcup_{\alpha}\mathcal{O}(\PP_{39}^{\alpha})}$.

If $\dim({\rm P}\cdot{\rm P})\geq4$, then ${\rm P}^2={\rm P}\cdot{\rm P}$ has codimension $1$ and
${\rm P}$ is generated by one element $x$; by the Leibniz rule and the skew-symmetry,
$[x^a,x^b,x^c]=abc\,x^{a+b+c-3}\cdot[x,x,x]=0$, so ${\rm P}\cong\AAA_2$.
Hence $\mathcal{O}(\AAA_2)$ is open, $\AAA_2$ is rigid, and $\dim \mathcal{O}(\AAA_2)=20$.

Next we show that every algebra of $\N_5$ belongs to the union of the five sets of the theorem.
The variety of $5$-dimensional nilpotent commutative associative algebras is irreducible and it is
the orbit closure of the null-filiform algebra $\AAA_2$ \cite{Maz80}.
So all $\AAA_i$ lie in $\overline{\mathcal{O}(\AAA_2)}$.
By the classification, the algebras $\PP_{21},\dots,\PP_{39}^{\alpha}$, $\PP_{01}$, $\PP_{02}$ belong to
$\mathcal Y_2$, and the algebras $\PP_{03},\dots,\PP_{10}$ and $\mathfrak n_1\oplus\C$ belong to
$\mathcal Y_1$.
For the remaining algebras we use the degenerations

{\small
\begin{longtable}{l|lllll}
\hline
$\mathcal P_{19}\to \mathcal P_{20}$ & 
\multicolumn{5}{|l}{$E_{1}=t^{-2}(e_{1} -  \frac{1}{2} e_{2} + e_{3}),\ 
E_{2}=t(e_{1} + \frac{1}{2} e_{2}),\ 
E_{3}=t^{4} e_{3},\ 
E_{4}=t^{3} e_{4},\ 
E_{5}=t^{2} e_{5}$} \\
\hline
$\mathcal P_{19}\to \mathcal P_{17}$ & $E_{1}=t\,e_{1},$ & $ E_{2}=t^{-2}(\frac{1}{2} e_{1} + e_{2}),$ & $ E_{3}=\frac{1}{t^{2}} e_{3},$ & $ E_{4}=\frac{1}{t^{3}} e_{4},$ & $ E_{5}=\frac{1}{t^{4}} e_{5}$ \\
\hline
$\mathcal P_{13}\to \mathcal P_{12}$ & $E_{1}=\frac{1}{t^{2}} e_{1},$ & $ E_{2}=\frac{1}{t^{4}} e_{2},$ & $ E_{3}=\frac{1}{t^{3}} e_{3},$ & $ E_{4}=\frac{1}{t} e_{4},$ & $ E_{5}=\frac{1}{t^{6}} e_{5}$ \\
\hline
$\mathcal P_{12}\to \mathcal P_{11}$ & $E_{1}=e_{1},$ & 
$ E_{2}=e_{2},$ & 
$ E_{3}=t\,e_{3},$ & 
$ E_{4}=\frac{1}{t} e_{4},$ & 
$ E_{5}=e_{5}$ \\
\hline
$\mathcal P_{14}\to \mathfrak n_2$ & $E_{1}=t\,e_{1},$ & 
$ E_{2}=\frac{1}{t} e_{2},$ & 
$ E_{3}=e_{3},$ & 
$ E_{4}=e_{4},$ & 
$ E_{5}=e_{5}$ \\
\hline
$\mathcal P_{20}\to \mathcal P_{18}$ & $E_{1}=e_{1},$ & 
$ E_{2}=-e_{2},$ & 
$ E_{3}=\frac{1}{t} e_{3},$ & 
$ E_{4}=- \frac{1}{t} e_{4},$ & 
$ E_{5}=\frac{1}{t} e_{5}$ \\
\hline
$\mathcal P_{19}\to \mathcal P_{16}$ & $E_{1}=- \frac{1}{t} e_{1},$ & $ E_{2}=-e_{2},$ & 
$ E_{3}=t\,e_{3},$ & 
$ E_{4}=e_{4},$ & 
$ E_{5}=\frac{1}{t} e_{5}$ \\
\hline
$\mathcal P_{17}\to \mathcal P_{15}$ & $E_{1}=- \frac{1}{t} e_{1},$ & 
$ E_{2}=t(-e_{1} - e_{2}),$ & 
$ E_{3}=e_{3},$ & 
$ E_{4}=e_{4},$ & 
$ E_{5}=e_{5}$ \\
\hline
$\mathcal P_{20}\to \mathcal P_{14}$ & $E_{1}=- \frac{1}{t} e_{2},$ & 
$ E_{2}=e_{1},$ & 
$ E_{3}=e_{3},$ & 
$ E_{4}=\frac{1}{t} e_{4},$ & 
$ E_{5}=\frac{1}{t^{2}} e_{5}$ \\
\hline
\end{longtable}
}

Finally, none of the five sets is contained in another one.
Since all five sets are irreducible and closed, it is enough to prove the twenty
non-degenerations

{\small
\begin{longtable}{rcl|l}
\hline
\endhead
$\AAA_2$&$\not \to $&$
\big\{\PP_{07},\PP_{13},\PP_{19},\PP_{39}^{\alpha}\big\}$ & $[A_1,A_1,A_1]=0$\\
\hline
$\PP_{39}^{\alpha}$&$\not\to$&$ 
\big\{\AAA_2,\PP_{07},\PP_{13},\PP_{19} \big\}$ & 
$A_1A_4=0$, $[A_1,A_1,A_4]=0$\\
\hline
$\PP_{07}$&$\not\to$&$ 
\big\{\AAA_2,\PP_{13},\PP_{19},\PP_{39}^{\alpha}\big\}$ & $A_1A_1\subseteq A_5$, $[A_1,A_1,A_1]\subseteq A_5$\\
\hline
$\PP_{13}$&$\not\to$&$\big\{\AAA_2,\PP_{07},\PP_{19},\PP_{39}^{\alpha}\big\}$ & $A_1A_1\subseteq A_4$, $A_1A_2\subseteq A_5$, $A_2A_4=0$, $A_1A_5=0$,\\
&&&$[A_1,A_1,A_1]\subseteq A_5$, $[A_1,A_1,A_4]=0$\\
\hline
$\PP_{19}$&$\not\to$&$\big\{\AAA_2,\PP_{07},\PP_{13},\PP_{39}^{\alpha}\big\}$ & $A_1A_1\subseteq A_5$, $A_1A_4=0$, $[A_1,A_1,A_1]\subseteq A_4$,\\
&&&$[A_1,A_1,A_4]\subseteq A_5$, $[A_1,A_3,A_4]=0$, $[A_1,A_1,A_5]=0$\\
\hline
\end{longtable}
}

(for the family we use the closure of the union of
all orbits $\mathcal{O}(\PP_{39}^{\alpha})$; the set $\mathcal R$ contains all $\PP_{39}^{\alpha}$, so it contains
this closure).
In each line the algebra on the left satisfies the conditions in the basis $e_1,\dots,e_5$
(for $\PP_{13}$ we use the basis $e_1,e_3,e_4,e_2,e_5$, in which
$e_1e_1=e_4$, $e_1e_4=e_5$, $e_2e_2=e_3e_3=e_5$, $[123]=e_5$).
The list below explains in simple terms why the algebras on the right do not satisfy the
conditions in any basis $f_1,\dots,f_5$.
\begin{enumerate}
\item $\AAA_2$: the conditions say that the $3$-Lie multiplication is zero, but in
$\PP_{07},\PP_{13},\PP_{19},\PP_{39}^{\alpha}$ it is not zero.
\item $\PP_{39}^{\alpha}$: the conditions say that $f_4,f_5\in\Ann$, but
$\Ann=\langle e_5\rangle$ in $\AAA_2,\PP_{07},\PP_{13},\PP_{19}$.
\item $\PP_{07}$: the conditions say that ${\rm P}^2\subseteq\langle f_5\rangle$, but
$\dim{\rm P}^2\geq2$ in $\AAA_2,\PP_{13},\PP_{19},\PP_{39}^{\alpha}$.
\item $\PP_{13}$: the conditions give ${\rm P}\cdot{\rm P}\subseteq\langle f_4,f_5\rangle$ and
$[{\rm P},{\rm P},{\rm P}]\subseteq\langle f_5\rangle$; this excludes $\AAA_2$
($\dim\AAA_2\cdot\AAA_2=4$) and $\PP_{19}$ ($\dim[{\rm P},{\rm P},{\rm P}]=2$).
For $\PP_{39}^{\alpha}$ the bracket gives $f_5\in\langle e_4\rangle$; then $f_j\cdot{\rm P}\subseteq
\langle f_5\rangle=\langle e_4\rangle$ for $j\geq2$, so the $e_5$-coordinate
$x_1y_1+x_2y_2+x_3y_3$ of $x\cdot y$ vanishes for $x=f_j$, $j\geq 2$, and all $y$; hence
$f_2,f_3,f_4,f_5\in\langle e_4,e_5\rangle$, which is impossible.
For $\PP_{07}$ the conditions give $f_5\in\Ann=\langle e_5\rangle$, and
$[f_4,{\rm P},{\rm P}]=0$, $f_4f_4=0$ give $f_4\in\langle e_4,e_5\rangle$ and then
$f_4\in\langle e_5\rangle$, which is impossible.
\item $\PP_{19}$: the conditions give ${\rm P}\cdot{\rm P}\subseteq\langle f_5\rangle$ and
$f_4\cdot{\rm P}=f_5\cdot{\rm P}=0$; this excludes $\AAA_2,\PP_{13},\PP_{39}^{\alpha}$
($\dim{\rm P}\cdot{\rm P}\geq2$) and $\PP_{07}$ (where $\{x: x\cdot{\rm P}=0\}=\langle e_5\rangle$).
\end{enumerate}
This proves the theorem.
\end{proof}


\begin{thebibliography}{99}

\bibitem{ABCF23}
Abdelwahab H., Barreiro E., Calder\'on A.J., Fern\'andez Ouaridi A.,
The algebraic classification and degenerations of nilpotent Poisson algebras,
Journal of Algebra, 615 (2023), 243--277.
\doi{10.1016/j.jalgebra.2022.10.018}

\bibitem{AFM25}
Abdelwahab H., Fern\'andez Ouaridi A., Mart\'in Gonz\'alez C.,
Degenerations of Poisson algebras,
Journal of Algebra and its Applications, 24 (2025), 3, 2550087.
\doi{10.1142/S0219498825500872}

\bibitem{aae23}
Abdurasulov K., Adashev J., Eshmeteva S.,
Transposed Poisson structures on solvable Lie algebras with filiform nilradical,
Communications in Mathematics, 32 (2024), 3, 441--483.
\doi{10.46298/cm.12844}

\bibitem{kz}
Abdurasulov K., Khudoyberdiyev A., Toshtemirova F.,
The geometric classification of nilpotent Lie--Yamaguti, Bol and compatible Lie algebras,
Communications in Mathematics, 33 (2025), 3, 10.
\doi{10.46298/cm.15810}

\bibitem{BL08}
Bagger J., Lambert N.,
Gauge symmetry and supersymmetry of multiple M2-branes,
Physical Review D, 77 (2008), 6, 065008.
\doi{10.1103/PhysRevD.77.065008},
\arxiv{0711.0955}

\bibitem{BSZ11}
Bai R., Song G., Zhang Y.,
On classification of $n$-Lie algebras,
Frontiers of Mathematics in China, 6 (2011), 4, 581--606.
\doi{10.1007/s11464-011-0107-z}

\bibitem{BH05}
Basu A., Harvey J.,
The M2--M5 brane system and a generalized Nahm's equation,
Nuclear Physics B, 713 (2005), 1--3, 136--150.
\doi{10.1016/j.nuclphysb.2005.02.007},
\arxiv{hep-th/0412310}

\bibitem{ben}
Ben Hassine A., Chtioui T., Elhamdadi M., Mabrouk S.,
Cohomology and deformations of left-symmetric Rinehart algebras,
Communications in Mathematics, 32 (2024), 2, 127--152.
\doi{10.46298/cm.12656}

\bibitem{BCZ25}
Bi Y., Chen Z., Zhang T.,
On (co-)morphisms of $n$-Lie--Rinehart algebras with applications to Nambu--Poisson manifolds,
Journal of Geometry and Physics, 214 (2025), 105536.
\doi{10.1016/j.geomphys.2025.105536}

\bibitem{BG13}
Burde D., de Graaf W.,
Classification of Novikov algebras,
Applicable Algebra in Engineering, Communication and Computing, 24 (2013), 1, 1--15.
\doi{10.1007/s00200-012-0180-x}

\bibitem{BC99}
Burde D., Steinhoff C.,
Classification of orbit closures of $4$-dimensional complex Lie algebras,
Journal of Algebra, 214 (1999), 2, 729--739.
\doi{10.1006/jabr.1998.7714}

\bibitem{CK16}
Cantarini N., Kac V.,
Classification of linearly compact simple Nambu--Poisson algebras,
Journal of Mathematical Physics, 57 (2016), 5, 051701.
\doi{10.1063/1.4948409}

\bibitem{D18}
Das A.,
Reduction of Nambu--Poisson manifolds by regular distributions,
Mathematical Physics, Analysis and Geometry, 21 (2018), 1, 5.
\doi{10.1007/s11040-018-9264-6}

\bibitem{DET26}
Daukeyeva N., Eraliyeva M., Toshtemirova F.,
Transposed $\delta$-Poisson algebra structures on null-filiform associative algebras,
Communications in Mathematics, 34 (2026), 1, 1.
\doi{10.46298/cm.15974}

\bibitem{DPP96}
de Azc\'arraga J., Perelomov A., P\'erez Bueno J.,
New generalized Poisson structures,
Journal of Physics A: Mathematical and General, 29 (1996), 7, L151--L157.
\doi{10.1088/0305-4470/29/7/001},
\arxiv{q-alg/9601007}

\bibitem{DSS10}
DeBellis J., S\"amann C., Szabo R.,
Quantized Nambu--Poisson manifolds and $n$-Lie algebras,
Journal of Mathematical Physics, 51 (2010), 12, 122303.
\doi{10.1063/1.3503773},
\arxiv{1001.3275}

\bibitem{DSS11}
DeBellis J., S\"amann C., Szabo R.,
Quantized Nambu--Poisson manifolds in a $3$-Lie algebra reduced model,
Journal of High Energy Physics, 2011 (2011), 4, 75.
\doi{10.1007/JHEP04(2011)075},
\arxiv{1012.2236}

\bibitem{DS17}
de Le\'on M., Sard\'on C.,
Geometric Hamilton--Jacobi theory on Nambu--Poisson manifolds,
Journal of Mathematical Physics, 58 (2017), 3, 033508.
\doi{10.1063/1.4978853}

\bibitem{DFST97}
Dito G., Flato M., Sternheimer D., Takhtajan L.,
Deformation quantization and Nambu mechanics,
Communications in Mathematical Physics, 183 (1997), 1, 1--22.
\doi{10.1007/s002200050017},
\arxiv{hep-th/9602016}

\bibitem{MF}
Fairon M.,
Modified double brackets and a conjecture of S.~Arthamonov,
Communications in Mathematics, 33 (2025), 3, 5.
\doi{10.46298/cm.13786}

\bibitem{FKS}
Fehlberg J\'unior R., Kaygorodov I., Saydaliyev A.,
The geometric classification of symmetric Leibniz algebras,
Communications in Mathematics, 33 (2025), 1, 10.
\doi{10.46298/cm.15949}

\bibitem{FKS25}
Fehlberg J\'unior R., Kaygorodov I., Saydaliyev A.,
The complete classification of irreducible components of varieties of Jordan superalgebras,
Communications in Mathematics, 33 (2025), 3, 15.
\doi{10.46298/cm.16917}

\bibitem{F85}
Filippov V.,
$n$-Lie algebras,
Siberian Mathematical Journal, 26 (1985), 6, 879--891.
\doi{10.1007/BF00969110}

\bibitem{GG08}
Gauntlett J., Gutowski J.,
Constraining maximally supersymmetric membrane actions,
Journal of High Energy Physics, 2008 (2008), 6, 053.
\doi{10.1088/1126-6708/2008/06/053},
\arxiv{0804.3078}

\bibitem{GM00}
Grabowski J., Marmo G.,
On Filippov algebroids and multiplicative Nambu--Poisson structures,
Differential Geometry and its Applications, 12 (2000), 1, 35--50.
\doi{10.1016/S0926-2245(99)00042-X}

\bibitem{GRH}
Grunewald F., O'Halloran J.,
Varieties of nilpotent Lie algebras of dimension less than six,
Journal of Algebra, 112 (1988), 2, 315--325.
\doi{10.1016/0021-8693(88)90093-2}

\bibitem{GRH3}
Grunewald F., O'Halloran J.,
Deformations of Lie algebras,
Journal of Algebra, 162 (1993), 1, 210--224.
\doi{10.1006/jabr.1993.1250}

\bibitem{G09}
Gustavsson A.,
Algebraic structures on parallel M2-branes,
Nuclear Physics B, 811 (2009), 1--2, 66--76.
\doi{10.1016/j.nuclphysb.2008.11.014},
\arxiv{0709.1260}

\bibitem{HS25}
Harrathi F., Sendi S.,
$\mathcal O$-operators, $r$-matrices and splitting of operations for non-commutative ternary
Nambu--Poisson algebras,
Journal of Geometry and Physics, 217 (2025), 105608.
\doi{10.1016/j.geomphys.2025.105608}

\bibitem{HC23}
Herbig H.-C., Chaparro Casta\~neda A.M.,
Higher form brackets for even Nambu--Poisson algebras,
Letters in Mathematical Physics, 113 (2023), 5, 97.
\doi{10.1007/s11005-023-01718-3}

\bibitem{IDMM97}
Ib\'a\~nez R., de Le\'on M., Marrero J.C., Mart\'in de Diego D.,
Dynamics of generalized Poisson and Nambu--Poisson brackets,
Journal of Mathematical Physics, 38 (1997), 5, 2332--2344.
\doi{10.1063/1.531960}

\bibitem{IW53}
In\"on\"u E., Wigner E.,
On the contraction of groups and their representations,
Proceedings of the National Academy of Sciences of the USA, 39 (1953), 6, 510--524.
\doi{10.1073/pnas.39.6.510}

\bibitem{k23}
Kaygorodov I.,
Non-associative algebraic structures: classification and structure,
Communications in Mathematics, 32 (2024), 3, 1--62.
\doi{10.46298/cm.11419}

\bibitem{MS}
Kaygorodov I., Khrypchenko M., P\'aez-Guill\'an P.,
The geometric classification of non-associative algebras: a survey,
Communications in Mathematics, 32 (2024), 2, 185--284.
\doi{10.46298/cm.14458}

\bibitem{KV20}
Kaygorodov I., Volkov Yu.,
Degenerations of Filippov algebras,
Journal of Mathematical Physics, 61 (2020), 2, 021701.
\doi{10.1063/1.5119393}

\bibitem{L23}
Lacroix S.,
On a class of conformal $\mathcal E$-models and their chiral Poisson algebras,
Journal of High Energy Physics, 2023 (2023), 6, 45.
\doi{10.1007/JHEP06(2023)045}

\bibitem{l24}
Lopes S.,
Noncommutative algebra and representation theory: symmetry, structure \& invariants,
Communications in Mathematics, 32 (2024), 3, 63--117.
\doi{10.46298/cm.11678}

\bibitem{LS26}
Lubkov R., Shiringovskiy I.,
Transposed Poisson structure on the Witt-type algebra $\mathcal W(a,-1)$: derivations, automorphisms,
and Rota--Baxter operators,
Communications in Mathematics, 34 (2026), 1.
\doi{10.46298/cm.18245}

\bibitem{MA15}
Makhlouf A., Amri A.,
Non-commutative ternary Nambu--Poisson algebras and ternary Hom-Nambu--Poisson algebras,
Journal of Generalized Lie Theory and Applications, 9 (2015), 1, 1000221.
\doi{10.4172/1736-4337.1000221}

\bibitem{Maz80}
Mazzola G.,
Generic finite schemes and Hochschild cocycles,
Commentarii Mathematici Helvetici, 55 (1980), 1, 267--293.
\doi{10.1007/BF02566686}

\bibitem{N98}
Nakanishi N.,
On Nambu--Poisson manifolds,
Reviews in Mathematical Physics, 10 (1998), 4, 499--510.
\doi{10.1142/S0129055X98000161}

\bibitem{N73}
Nambu Y.,
Generalized Hamiltonian dynamics,
Physical Review D, 7 (1973), 8, 2405--2412.
\doi{10.1103/PhysRevD.7.2405}

\bibitem{OS24}
Olshanski G., Safonkin N.,
Double Poisson brackets and involutive representation spaces,
Letters in Mathematical Physics, 114 (2024), 1, 33.
\doi{10.1007/s11005-024-01782-3}

\bibitem{P08}
Papadopoulos G.,
M2-branes, $3$-Lie algebras and Pl\"ucker relations,
Journal of High Energy Physics, 2008 (2008), 5, 054.
\doi{10.1088/1126-6708/2008/05/054},
\arxiv{0804.2662}

\bibitem{Poo08}
Poonen B.,
Isomorphism types of commutative algebras of finite rank over an algebraically closed field,
Contemporary Mathematics, 463 (2008), 111--120.
\doi{10.1090/conm/463/09050}

\bibitem{kms}
Sartayev B.,
Some generalizations of the variety of transposed Poisson algebras,
Communications in Mathematics, 32 (2024), 2, 55--62.
\doi{10.46298/cm.11346}

\bibitem{SS78}
Skjelbred T., Sund T.,
Sur la classification des alg\`ebres de Lie nilpotentes,
Comptes Rendus de l'Acad\'emie des Sciences Paris, S\'erie A-B, 286 (1978), 5, A241--A242.

\bibitem{SJ18}
Song L., Jiang J.,
Generalized derivation extensions of $3$-Lie algebras and corresponding Nambu--Poisson structures,
Journal of Geometry and Physics, 124 (2018), 74--85.
\doi{10.1016/j.geomphys.2017.10.011}

\bibitem{T94}
Takhtajan L.,
On foundation of the generalized Nambu mechanics,
Communications in Mathematical Physics, 160 (1994), 2, 295--315.
\doi{10.1007/BF02103278},
\arxiv{hep-th/9301111}

\bibitem{Th91}
Thompson R.,
Pencils of complex and real symmetric and skew matrices,
Linear Algebra and its Applications, 147 (1991), 323--371.
\doi{10.1016/0024-3795(91)90238-R}

\end{thebibliography}
\end{document}